\newtheorem{thm}{Theorem}[section]
\newtheorem{lem}[thm]{Lemma}
\newtheorem{prop}[thm]{Proposition}
\theoremstyle{definition}
\newtheorem{defn}[thm]{Definition}
\theoremstyle{remark}
\newtheorem{rem}[thm]{Remark}
\theoremstyle{Fact}
\theoremstyle{Claim}
\newtheorem{claim}[thm]{Claim}
\theoremstyle{Example}
\numberwithin{equation}{section}
\def\tagform@#1{\maketag@@@{\ignorespaces#1\unskip\@@italiccorr}}
\let\orgtheequation\theequation
\def\theequation{(\orgtheequation)}
\begin{document}

\title[Existence and Uniqueness to the Prandtl Equations]{Local-in-Time Existence and Uniqueness
of Solutions to the Prandtl Equations by Energy Methods}


\author{Nader Masmoudi}
\address{Courant Institute, 251 Mercer Street,
New York, NY 10012-1185,
USA,}
\email{masmoudi@courant.nyu.edu  }

\author{Tak Kwong WONG}
\address{Department of Mathematics\\ University of California, Berkeley \\
970 Evans Hall $\#$3840\\ Berkeley, CA 94720, USA}
\email{takkwong@math.berkeley.edu}



\date{\today}



\begin{abstract}
   We prove local existence and uniqueness
 for  the two-dimensional  Prandtl system in weighted Sobolev spaces
under the Oleinik's monotonicity assumption. In particular we do not
use the Crocco transform.  Our proof is based on a {\it new nonlinear
energy estimate}  for the  Prandtl system. This new energy estimate is
based on a cancellation property which is valid under the  monotonicity assumption.
To construct the solution,
we  use a  regularization of  the system  that  preserves  this nonlinear
structure. This new nonlinear
structure may give some insight on the convergence properties from
Navier-Stokes system to Euler system when the viscosity goes to zero.
\end{abstract}

\maketitle
\tableofcontents

 \newcommand{\R}{\mathbb{R}}
 \newcommand{\T}{\mathbb{T}}
 \newcommand{\TR}{\mathbb{T} \times \mathbb{R}^+}
 \newcommand\N{{\mathbb N}}
 \newcommand\Z{{\mathbb Z}}
 \newcommand\e{{\epsilon}}
 \newcommand\w{\omega}
 \newcommand\dx{\partial_x}
 \newcommand\dy{\partial_y}
 \newcommand\dt{\partial_t}
 \newcommand\dxx{\partial^2_x}
 \newcommand\dyy{\partial^2_y}
 \newcommand\Dalpha{D^\alpha}
 \newcommand\Dawe{\Dalpha \wep}
 \newcommand\Hs{H^{s,\gamma}_{\sigma,\delta}}
 \newcommand\Hsthree{H^{s+3,\gamma}_{\sigma, \delta}}
 \newcommand\Hsfour{H^{s+4,\gamma}_{\sigma, \delta}}
 \newcommand\Hg{H^{s,\gamma}_g}
 \newcommand\aep{a^\e}
 \newcommand\gep{g^\e}
 \newcommand\uep{u^\e}
 \newcommand\vep{v^\e}
 \newcommand\wep{\omega^\e}
 \newcommand\pep{p^\e}
 \newcommand\constC{C_{s, \gamma, \sigma, \delta}}
 \newcommand\constCw{C_{s',\gamma,\delta,\|\w\|_{H^{s'+4,\gamma}}}}


\section{Introduction}\label{s:intro}

The zero-viscosity limit of  the incompressible Navier-Stokes system
in a bounded domain, with Dirichlet boundary conditions,   is one
of the most
challenging open  problems  in Fluid Mechanics.
This is  due to the formation of
a boundary layer which appears because  we can not impose
the same  Dirichlet boundary condition for the Euler equation.
 This boundary layer satisfies formally
 the Prandtl system.  Indeed, in 1904,
Prandtl \cite{Prandtl04}   suggested that there  exists a thin layer called
boundary layer, where the solution $\vec{u}$ undergoes a sharp
transition from a solution to the  Euler system  to the no-slip
boundary condition $ \vec{u}  = \vec{0} $  on  $\partial \Omega $ of the
Navier-Stokes system. In other words, Prandtl proved
formally that the solution $\vec{u}$ of the Navier-Stokes system
can be written as   $\vec{u} = \vec{U} + \vec{u}_{BL}$ where $\vec{U}$ solves
the Euler system with $\vec{U} \cdot {\bf n} =0$ on the boundary  and
$\vec{u}_{BL}$ is small except near the boundary. In rescaled variables
$\vec{U} + \vec{u}_{BL}$ solves the Prandtl system.  When studying this problem,
there are at least 3 main questions:

\begin{itemize}
\item[(a)] The local well-posedness of  the Prandtl system;

 \item[(b)] Proving the convergence of solutions of the  Navier-Stokes system towards
a solution of the Euler system;

\item[(c)] The justification of the boundary layer expansion.
\end{itemize}

In full generality, these questions are still open except in
the analytic case where (a)-(c) can be proved \cite{SC98a,SC98b,LCS03,KV11}.

Concerning (a), the main existence result is due to  Oleinik who
 proved the local existence for the Prandtl system
\cite{Oleinik63,Oleinik66}  under a monotonicity assumption and using
the Crocco transform
 (see also \cite{OS99}).
These solutions can be extended as global weak solutions if the
pressure gradient is favorable $(\partial_x p \leq 0) $
\cite{XZ04,XZZ}.
However, E and Engquist \cite{EE97} proved a
blow up
result for the Prandtl system for some special
type of initial data.
More recently,  G\'erad-Varet and Dormy \cite{GVD10}   proved ill-posedness
for the linearized Prandtl equation around a nonmonotonic
shear flow (see also \cite{GN11,GVN12}).

Concerning (b), the main result is a convergence criterion due
to Kato  \cite{Kato84}  that basically says that convergence
is equivalent to the fact that there is no dissipation in
a very  thin layer (of size $\nu$).  This criterion was
extended in different directions  (see \cite{TW97a,Wan01,Kel07}).
Also, in \cite{Masmoudi98arma}, it is proved that the convergence
holds if the horizontal viscosity goes to zero slower than
the vertical one. It is worth noting that the Prandtl system
is the same in this case.

Concerning (c),   there is a negative result by Grenier \cite{Grenier00cpam}
who proves that the expansion does not hold in $W^{1,\infty}$.
Of course this does not prevent (b) from holding.

There are many review papers about
 the inviscid limit of the Navier-Stokes in a bounded
domain  and the
Prandtl system from different aspects (see \cite{CS00,E00,Masmoudi07hand}).
Let us also mention that when considered in the whole space \cite{Swann71,Kato72,Masmoudi07cmp}
or with other boundary conditions such as Navier boundary condition \cite{Xin,Iftimie-Sueur,BC11,MR11prep}
 or incoming flow \cite{TW02}, the convergence problem becomes simpler
since there is no boundary layer or the boundary layer is stable.

The prime objective of this paper  is to prove the local existence and uniqueness for the two-dimensional
Prandtl system  under the Oleinik's monotonicity assumption in certain weighted energy spaces
without using  the Crocco transform.
Precise statement will be provided in section  \ref{s:mainresults}.
In addition to giving a very simple understanding of the monotonicity assumptions,
our result may give us a better understanding about the questions (b) and (c)  since
it is given in physical space.
Nevertheless,  we are still not able to use our new nonlinear energy to study the
convergence problem (b) or (c).
  In spirit this paper is similar to our
previous paper about the Hydrostatic Euler equations \cite{MW11prep} where we gave
a proof of existence and uniqueness  in physical space under some
convexity assumption of the profile. The previous known proof of Brenier \cite{Brenier99}
uses Lagrangian coordinates and requires more assumptions on the initial data.

Let us end this introduction by outlining the structure of this paper. In section
\ref{s:mainresults} we will state our main result, that is, theorem \ref{t:Hs exist PS}.
Explanations of our approach and approximate scheme will be provided in sections
\ref{s:outline} and \ref{s:approx}. Assuming the solvability of approximate systems,
we will derive our new weighted a priori estimates in section \ref{s:RPE}. Using these
weighted estimates, we will complete the proof of our main theorem \ref{t:Hs exist PS}
in section \ref{s:pf main thm}. In section \ref{s:exist RPE} we will solve the
approximate systems. For the sake of self-containedness, we will also provide
several elementary proofs and computations in appendices \ref{s:appendixA} -
\ref{s:appendixE}. Finally, let us mention that the new  preprint \cite{AWXY12}  also
considers the existence for the  Prandtl system in physical space.
The methods of proof are very different.

%
%
%
\section{Main Result}\label{s:mainresults}
In this section we will first introduce the Prandtl equations, and then describe
our solution spaces as well as our main result. Main difficulties and brief explanation
of our approach will be given in section \ref{s:outline}.

Throughout this paper, we are concerned with the two-dimensional Prandtl equations in a
periodic domain $\TR := \{(x,y);\, x \in \R/\Z, 0 \le y < + \infty\}$:
%
\begin{equation}\label{e:PE}
\left\{\begin{aligned}
    \partial_t u + u\partial_x u +v\partial_y u & = \partial^2_y u - \partial_x p
    && \text{ in } [0,T] \times \TR\\
    \partial_x u + \partial_y v & = 0 && \text{ in } [0,T] \times \TR\\
     u|_{t=0} & = u_0 && \text{ on } \TR\\
    u|_{y=0} = v|_{y=0} & = 0 && \text{ on } [0,T] \times \T\\
    \lim_{y \to + \infty} u(t,x,y) & = U  && \text{ for all } (t,x) \in [0,T] \times \T,
\end{aligned}\right.
\end{equation}
where the velocity field $(u,v) := (u(t,x,y), v(t,x,y))$ is an unknown,
the initial data $u_0 := u_0 (x,y)$ and the outer flow $U := U(t,x)$ are given and
satisfy the compatibility conditions:
%
%
\begin{equation} \label{e:u0 conditions}
    u_0 |_{y=0} = 0 \qquad \text{ and } \qquad \lim_{y \to +\infty} u_0 = U.
\end{equation}
Furthermore, the given scalar pressure $p := p(t,x)$ and the outer flow $U$ satisfy
the well-known Bernoulli's law:
%
\begin{equation}\label{e:Ber}
    \partial_t U + U \partial_x U = - \partial_x p.
\end{equation}
In this work, we will consider system \eqref{e:PE} under the Oleinik's monotonicity assumption:
%
\begin{equation}\label{e:MA}
    \omega :=\partial_y u >0 .
\end{equation}
Under this hypothesis, one must further assume $U > 0$.

Let us first introduce the function space in which the Prandtl equations \eqref{e:PE}
will be solved. Denoting the vorticity $\omega := \partial_y u$, we define the  space
$\Hs$   for $\omega$ by
\begin{equation*}
    \Hs := \left\{ \omega : \TR \to \R; \, \| \omega \|_{H^{s,\gamma}} < + \infty,
    (1+y)^\sigma \omega \ge \delta \text{ and } \sum_{|\alpha| \le 2} |(1+y)^{\sigma + \alpha_2}
     \Dalpha \omega |^2 \le \frac{1}{\delta^2} \right\}
\end{equation*}
where $s\ge 4$, $\gamma \ge 1$, $\sigma > \gamma + \frac{1}{2}$, $\delta \in (0,1)$,
$\Dalpha := \partial^{\alpha_1}_x \partial^{\alpha_2}_y$ and the weighted $H^s$ norm
$\| \cdot \|_{H^{s,\gamma}}$ is defined by
%
\begin{equation}\label{e:w2Hsgamma}
    \|\omega\|^2_{H^{s, \gamma} (\TR)} := \sum_{|\alpha| \le s} \|(1+y)^{\gamma + \alpha_2}
    \Dalpha \omega \|^2_{L^2 (\TR)}.
\end{equation}
Here, the main idea is adding an extra weight $(1+y)$ for each $y$-derivative. This corresponds
to the weight $\frac{1}{y}$ in the Hardy type inequality. Furthermore, we also denote
$H^{s,\gamma} := \{ \w :\TR \to \R;\, \|\w\|_{H^{s,\gamma}} < + \infty\}$.
\begin{rem}[Requirement: $\sigma > \gamma + \frac{1}{2}$]
If $\sigma \le \gamma + \frac{1}{2}$, then one may check that $\Hs (\TR)$ is an empty set. Thus, we
must have the hypothesis $\sigma > \gamma + \frac{1}{2}$.
\end{rem}

Now, we can state our main result:

\begin{thm}[Local $\Hs$ Existence and Uniqueness to the Prandtl Equations \eqref{e:PE}] \label{t:Hs exist PS}
Let $s \ge 4$ be an even integer, $\gamma \ge 1$, $\sigma > \gamma + \frac{1}{2}$ and
$\delta \in (0,\frac{1}{2})$. For simplicity\footnote{The regularity hypothesis on the outer
flow $U$ is obviously  not optimal in the viewpoint of our a priori weighted energy estimates.
One may further loosen the regularity requirement on $U$ by applying other approximate schemes.
We leave this for the interested reader.}, we suppose that the outer flow $U$ satisfies
%
\begin{equation} \label{e:supU}
    \sup_t \||U|\|_{s+9, \infty} := \sup_t \sum^{[\frac{s+9}{2}]}_{l=0}
    \|\dt^l U\|_{W^{s-2l+9, \infty} (\T)} < + \infty.
\end{equation}
Assume that $u_0 - U \in H^{s, \gamma-1}$ and the initial vorticity
$\omega_0 := \partial_y u_0 \in H^{s,\gamma}_{\sigma, 2\delta}$. In addition, when $s = 4$, we
further assume that $\delta > 0$ is chosen small enough such that
%
%
\begin{equation} \label{e:mainthm con}
    \|\w_0\|_{\Hg} \le C \delta^{-1}
\end{equation}
where the norm $\|\cdot\|_{\Hg}$ will be defined by \eqref{e:Hgsgamma} and $C$ is a universal
constant. Then there exist a time $T := T(s,\gamma,\sigma,\delta,\|\omega_0 \|_{H^{s, \gamma}},
U)>0$ and a unique classical solution $(u,v)$ to the Prandtl equations \eqref{e:PE} such that
$u-U \in L^\infty([0,T]; H^{s, \gamma-1}) \cap
C([0,T];H^{s}-w)$ and the vorticity
$\omega := \partial_y u \in L^\infty([0,T];\, \Hs) \cap
C([0,T];H^{s}-w)$,  where $ H^{s}-w $ is the space $ H^{s} $ endowed with
its weak topology.
\end{thm}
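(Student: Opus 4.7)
The plan is to combine a new nonlinear energy estimate for the vorticity $\w = \dy u$, exploiting a cancellation that is only available under the monotonicity assumption \eqref{e:MA}, with a regularized approximate scheme that preserves this same structure, and then to pass to the limit in the classical way.

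\emph{The cancellation and the good unknown.} The vorticity satisfies
$$\dt \w + u\dx\w + v\dy\w = \dyy\w.$$
After I apply $\Dalpha$ with $|\alpha|=s$ and test in a weighted $L^2$, the transport terms $u\dx$ and $v\dy$ are harmless (integration by parts uses $\dy v = -\dx u$), and every commutator contribution can be absorbed by $\|\w\|_{\Hs}$ and $\|u-U\|_{H^{s,\gamma-1}}$ --- \emph{except} the single commutator $\Dalpha v \cdot \dy\w$, since $v = -\int_0^y \dx u\,dy'$ makes $\Dalpha v$ carry one more $x$-derivative than we control. Applying $\Dalpha$ to the velocity equation instead produces a matching bad term $\Dalpha v\cdot\w$. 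Since $\w > 0$, I would treat both simultaneously by introducing the \emph{good unknown}
$$g^\alpha := \Dalpha \w - \frac{\dy\w}{\w}\,\Dalpha u,$$
for which a direct computation gives the exact cancellation $-\Dalpha v\cdot\dy\w + \frac{\dy\w}{\w}\Dalpha v\cdot\w = 0$. Consequently $g^\alpha$ obeys an evolution equation whose right-hand side is bounded by polynomial expressions in $\|\w\|_{\Hs}$, $\|u-U\|_{H^{s,\gamma-1}}$, $\||U|\|_{s+9,\infty}$, and $\delta^{-1}$, the last controlling the ratio $\dy\w/\w$ through the pointwise constraints in $\Hs$.

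\emph{Approximation and uniform estimates.} Following the approximate scheme of Sections \ref{s:approx} and \ref{s:exist RPE}, I would work with a family $(\uep,\wep)$ of classical solutions of a parabolic regularization of \eqref{e:PE}, designed to preserve both the monotonicity $(1+y)^\sigma \wep \ge \delta$ and the cancellation above. Granting the solvability in Section \ref{s:exist RPE}, the weighted a priori estimates of Section \ref{s:RPE} --- obtained by running the argument above for every $|\alpha|\le s$ --- yield a Gronwall inequality
$$\frac{d}{dt}E^\e(t) \leq \constC\,P\bigl(E^\e(t)\bigr),$$
with $E^\e(t) := \|\wep(t)\|^2_{H^{s,\gamma}} + \|\uep(t) - U(t)\|^2_{H^{s,\gamma-1}}$ and $P$ polynomial. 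This produces a time $T>0$ independent of $\e$ on which $E^\e$ stays bounded. The pointwise constraints in $\Hs$ are propagated by a weighted maximum principle applied to $\wep$, $\dy\wep$, and $\dyy\wep$ along the characteristics; the margin consumed by this step is precisely why the hypothesis requires $\w_0 \in H^{s,\gamma}_{\sigma,2\delta}$ rather than $\Hs$.

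\emph{Passage to the limit and uniqueness.} Banach--Alaoglu together with an Aubin--Lions argument then deliver, along a subsequence, a limit $(u,\w)$ with $u-U \in L^\infty([0,T];H^{s,\gamma-1})\cap C([0,T];H^{s}-w)$ and $\w \in L^\infty([0,T];\Hs)\cap C([0,T];H^{s}-w)$; strong convergence in lower-norm weighted spaces handles the quadratic nonlinearities, and lower/upper semicontinuity recovers the $\Hs$-constraints from the $2\delta$ cushion. For uniqueness I would run the same cancellation on the difference equation for two solutions $(u_1,v_1),(u_2,v_2)$ via the reduced good unknown $\tilde g := (\w_1-\w_2) - \frac{\dy\w_1}{\w_1}(u_1-u_2)$; only one derivative is at stake here, and the resulting weighted $L^2$ Gronwall estimate in $\|\tilde g\|$ forces $(u_1,v_1) = (u_2,v_2)$.

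\emph{The main obstacle.} The crucial difficulty, and the paper's main novelty, is reconciling Step 1 with Step 2: designing an approximation that is parabolic enough for classical solvability, that maintains $\w > 0$ with the prescribed decay as $y \to +\infty$, and that nevertheless does not destroy the $g^\alpha$-cancellation beyond a uniformly vanishing error as $\e\to 0$. A subsidiary but delicate point is the bookkeeping of the weights $(1+y)^{\gamma+\alpha_2}$ against the ratio $\dy\w/\w$ through the Hardy-type inequality; this is what selects the threshold $\sigma > \gamma + \tfrac{1}{2}$ and forces the smallness assumption \eqref{e:mainthm con} in the borderline regularity case $s=4$.
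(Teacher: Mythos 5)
Your outline follows the paper's strategy faithfully -- the good unknown built from $\dx^s\w - \frac{\dy\w}{\w}\dx^s(u-U)$, the exact cancellation of the $\dx^s v$ terms under $\w>0$, a regularization preserving that structure, maximum-principle propagation of the pointwise constraints, compactness, and an $L^2$ comparison principle for uniqueness via the same cancellation. (Two small points: the good unknown must be formed with $u-U$ rather than $u$, since $\dx^s u\to\dx^s U$ as $y\to+\infty$ and is not in $L^2$; and the cancellation is only needed for $\alpha=(s,0)$, because for $\alpha_2\ge 1$ one has $\Dalpha v=-\dx^{\alpha_1+1}\dy^{\alpha_2-1}u$, which carries at most $s$ $x$-derivatives and is handled by the standard energy method.)

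The genuine gap is difficulty (iii): you never address the boundary terms at $y=0$. The vorticity carries only the Neumann datum $\dy\w|_{y=0}=\dx p$, so when you integrate $\iint(1+y)^{2\gamma+2\alpha_2}\Dalpha\w\,\dyy\Dalpha\w$ by parts you produce $-\int_{\T}\Dalpha\w\,\dy\Dalpha\w\,dx\big|_{y=0}$, and for $|\alpha|=s$ the factor $\dy\Dalpha\w|_{y=0}$ has order $s+1$ -- too high for any trace estimate, so the energy inequality does not close as written. The paper resolves this with a boundary reduction lemma (lemma \ref{l:reduce bdry}): using the equation iteratively at $y=0$ together with $u|_{y=0}=v|_{y=0}=0$, every odd normal derivative $\dy^{2k+1}\w|_{y=0}$ is rewritten in terms of $\dx p$ and products of lower-order derivatives of $\w$, and a parity argument (Case I/II in the proof of proposition \ref{p:L2 Dwep}) reduces each boundary integral to a controllable form. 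This is precisely why the theorem assumes $s$ even, a hypothesis your proposal neither uses nor explains. Relatedly, your attribution of the extra smallness condition \eqref{e:mainthm con} for $s=4$ to ``Hardy-weight bookkeeping'' is not the actual mechanism: it arises because for $s=4$ the boundary value $\sum_{|\alpha|\le2}|(1+y)^{\sigma+\alpha_2}\Dalpha\w|^2\big|_{y=0}$ can only be bounded by $C\|\w\|_{\Hg}^2$ via Sobolev embedding (estimate \eqref{e:I s4}), and compatibility with the constraint $I\le\delta^{-2}$ defining $\Hs$ then forces $\|\w_0\|_{\Hg}\lesssim\delta^{-1}$; for $s\ge6$ a growth-rate argument on $\dt I|_{y=0}$ gives the sharper bound \eqref{e:weighted Linfty s6} and no smallness is needed.
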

%
%

\begin{rem}[$U \equiv$ constant]
When the outer flow $U$ is a constant, one may show that the life-span $T$ stated in
theorem \ref{t:Hs exist PS} is independent of $U$. For the reasoning, see
remark \ref{r:remark}.
\end{rem}

The proof of theorem \ref{t:Hs exist PS} is based on our new weighted energy estimates, which relies
on a nonlinear cancelation property that holds under the Oleinik's monotonicity assumption \eqref{e:MA}.
An outline of our proof will be given in sections \ref{s:outline} and \ref{s:approx}, and the
detailed analysis will be provided in sections \ref{s:RPE} - \ref{s:exist RPE}.

Before we proceed, let us comment on our notation. Throughout this paper, all constants
$C$ may be different in different lines. Subscript(s) of a constant illustrates the
dependence of the constant, for example, $C_s$ is a constant depending on $s$ only.

%
%

\section{Difficulties and Outline of Our Approach}\label{s:outline}
The aim of this section is to explain main difficulties of proving theorem
\ref{t:Hs exist PS} as well as our strategies for overcoming them. Let us begin by stating
the main difficulties as follows.

In order to solve the Prandtl equations \eqref{e:PE} in certain $H^s$ spaces, we have to overcome the following three difficulties:
\begin{itemize}
\item[(i)] the vertical velocity $v := - \partial^{-1}_y \partial_x  u$ creates a loss of $x$-derivative, so the standard energy estimates do not apply;
\item[(ii)] the unboundedness of the underlying physical domain $\TR$ allows certain quantities growth at $y = + \infty$ even if the solution is smooth or bounded in $H^s$;
\item[(iii)] the lack of higher order boundary conditions at $y=0$ prevents us to apply the integration by parts in the variable $y$, but it is a standard and crucial step to deal with the operator $\partial_t - \partial^2_y$.
\end{itemize}

Indeed, difficulty (i) is the major problem for the Prandtl equations \eqref{e:PE}, and it explains why there are just a few existence results in the literature. The key ingredient of the current work is to develop a $H^s$ control by considering a special $H^s$ norm (see \eqref{e:Hgsgamma} below) which can avoid the regularity loss created by $v$. Difficulty (ii) is somewhat based on the fact that Poincar\'{e} inequality does not hold for the unbounded domain $\TR$. However, one may overcome this technical problem by  replacing the Poincar\'{e} type inequalities by  Hardy type inequalities. This is our main reason for adding a weight $(1+y)$ for each $y$-derivative to our $H^s$ energies \eqref{e:w2Hsgamma} and \eqref{e:Hgsgamma}. Difficulty (iii) seems to be an obstacle, but it is not. A reconstruction argument for the higher order boundary conditions can fix this technical difficulty when $s$ is even, see lemma \ref{l:reduce bdry} for more details.

Now, let us explain our new weighted energy, which is the main novelty in this paper.

Judging from nonlinear cancelations, the weighted norm \eqref{e:w2Hsgamma} is not suitable for estimating solutions of the Prandtl equations \eqref{e:PE}. Thus, we introduce another weighted norm for the vorticity $\omega$, namely
%
\begin{equation}\label{e:Hgsgamma}
    \|\omega\|^2_{H^{s, \gamma}_g (\TR)}
    := \|(1+y)^\gamma g_s \|^2_{L^2 (\TR)} + \sum_{\substack{|\alpha| \le s\\ \alpha_1 \le s-1}}
    \|(1+y)^{\gamma + \alpha_2} D^\alpha \omega \|^2_{L^2 (\TR)}
\end{equation}
where
\[
     g_s := \dx^s \omega - \frac{\dy\w}{\w} \dx^s (u-U) \qquad \text{ and }
     \qquad u(t,x,y) := \int^y_0 \w (t,x,\tilde{y})\, d\tilde{y}
\]
provided that $\omega := \partial_y u > 0$. The difference between norms \eqref{e:w2Hsgamma} and \eqref{e:Hgsgamma} is that we replace the weighted $L^2$ norm of $\dx^s \omega$ by that of $g_s$, which is a better quantity because $g_s$ can avoid the loss of $x$-derivative (i.e., difficulty (i) above), see subsubsection \ref{sss:geps} for further explanation.

The first important observation is that as long as $\omega\in\Hs$, we can show that the new weighted norm \eqref{e:Hgsgamma} is almost equivalent to the weighed $H^s$ norm \eqref{e:w2Hsgamma}, that is,
%
\begin{equation} \label{e:wHg}
    \|\w\|_{\Hg} \lesssim \|\w\|_{H^{s,\gamma}} + \|u-U\|_{H^{s,\gamma-1}}
    \lesssim \|\w\|_{\Hg} + \|\dx^s U\|_{L^2}
\end{equation}
provided that $\w = \dy u$, $u|_{y=0}=0$ and $ \lim_{y \to +\infty} u =U$. The proof of \eqref{e:wHg} is elementary, and will be given in
appendix \ref{s:appendixA}. In spirit of \ref{e:wHg}, we will
estimate $\|\w\|_{\Hg}$ instead of $\|\w\|_{H^{s,\gamma}}$.

The second important
observation is that due to the nonlinear
cancelation, the loss of $x$-derivative is avoided by the norm $\|\cdot\|_{\Hg}$,
so one can simply derive a priori energy estimates on $\w$ by applying the standard
energy methods. These estimates indeed can be extended to $\wep := \dy \uep$, which
is the regularized vorticity of the regularized Prandtl equations \eqref{e:RPE} below,
because the regularization \eqref{e:RPE} preserves the nonlinear structure of the
original Prandtl equations \eqref{e:PE}. See subsection \ref{ss:weighted est} for the
detailed analysis.

Once we have obtained the weighted energy estimates, it remains to derive weighted
$L^\infty$ controls on the lower order derivatives of $\w$ so that we can close our
estimates in the function space $\Hs$. The derivations of these $L^\infty$ estimates
are standard: ``viewing'' the evolution equations of the lower order derivatives as
``linear'' parabolic equations with coefficients involving higher order terms that
can be bounded by the weighted energies, we can obtain our desired estimates by
the classical maximum principle since we have already controlled the weighted energies.
These weighted $L^\infty$ estimates are also extendable to the regularized vorticity
$\wep$, see subsection \ref{ss:Linfty lower} for further details.

In order to prove the existence, we will construct an approximate scheme which
keeps the a priori estimates described above. Due to the nonlinear cancelation, our
a priori estimates are complicated in certain sense, so the construction of the
approximate scheme is tricky. An outline of this construction will be given
in section \ref{s:approx}.

For the uniqueness, it is an immediate consequence of a $L^2$ comparison principle
(see proposition \ref{p:L2 compare}), whose proof relies on a nonlinear
cancelation that is similar to the one applied in the energy estimates.

%
%

\section{Approximate Scheme} \label{s:approx}
The main purpose of this section is outlining the approximate systems which we apply to
prove the existence. Since our weighted $H^s$ a priori estimates are somewhat more
nonlinear than  usual,
the approximate scheme is slightly more complicated.

Our approximate scheme has three different levels and will be explained as follows.

The first approximation of \eqref{e:PE} is the regularized Prandtl equations:
for any $\e > 0$,
%
\begin{equation}\label{e:RPE}
\left\{\begin{aligned}
    \dt\uep+\uep\dx\uep+\vep\dy\uep& = \e^2 \dxx\uep + \dyy\uep -\dx\pep && \text{ in }
    [0,T] \times \TR\\
    \dx\uep + \dy\vep & = 0 && \text{ in } [0,T] \times \TR\\
    \uep|_{t=0} & = u_0 && \text{ on } \TR\\
    \uep|_{y=0} = \vep|_{y=0} & = 0 && \text{ on } [0,T] \times \T\\
    \lim_{y \to +\infty} \uep(t,x,y) & = U && \text{ for all } (t,x) \in [0,T] \times \T,
\end{aligned}\right.
\end{equation}
where $\pep$ and $U$ satisfy a regularized Bernoulli's law:
%
\begin{equation}\label{e:RBer}
    \dt U + U \dx U = \e^2 \dxx U - \dx \pep.
\end{equation}
Or equivalently, the regularized vorticity $\wep := \dy \uep$ satisfies the following
regularized vorticity system: for any $\e > 0$,
%
\begin{equation}\label{e:Rvort}
\left\{\begin{aligned}
    \dt\wep + \uep\dx\wep+\vep\dy\wep & = \e^2 \dxx\wep+\dyy\wep && \text{ in } [0,T] \times \TR\\
    \wep|_{t=0} & = \w_0 := \dy u_0 && \text{ on } \TR\\
    \dy \wep|_{y=0} & = \dx \pep && \text{ on } [0,T] \times \T
\end{aligned}\right.
\end{equation}
where the velocity field $(\uep,\vep)$ is given by
%
\begin{equation}\label{e:Ruv}
    \uep(t,x,y) := U - \int^{+\infty}_y \wep (t,x,\tilde{y})\,d\tilde{y} \quad \text{ and }
    \quad \vep (t,x,y) := - \int^y_0 \dx \uep (t,x,\tilde{y})\, d\tilde{y}.
\end{equation}

The main idea of this approximation is adding the viscous terms $\e^2\dxx \uep$ and
 $\e^2 \dxx \wep$ to avoid the loss of $x$-derivative. The advantage of this regularization is
 that our new weighted $H^s$ and $L^\infty$ a priori estimates also hold for $\wep$, and it is
 the main reason why we can derive the uniform (in $\e$) estimates in section \ref{s:RPE}.
 The price that we pay is the appearance of extra terms $\frac{\dx \wep}{\wep},
 \frac{\dxx \wep}{\wep}, \frac{\dx\dy \wep}{\wep}$ and $\frac{\dyy \wep}{\wep}$ during the
 estimation, but these terms can be controlled in the function space $C([0,T]; \Hs)$. Before
 going to the next level, we should also emphasize that replacing the
 Bernoulli's law \eqref{e:Ber} by the regularized Bernoulli's law \eqref{e:RBer}
 is crucial here, otherwise the conditions $u|_{y=0} = 0$ and $\lim_{y \to +\infty} u = U$
 cannot be satisfied simultaneously. Although the approximate system \eqref{e:Rvort} -
 \eqref{e:Ruv} seems to be nice, its existence in the function space $\Hs$ is not obvious,
 so we will further approximate it by the next approximate system.

The second level of approximation is the truncated and regularized vorticity system:
for any $\e > 0$ and $R \ge 1$,
%
\begin{equation} \label{e:TRPE}
\left\{\begin{aligned}
    \dt \w_R + \chi_R \{u_R \dx \w_R + v_R \dy \w_R \} & = \e^2 \dxx \w_R +
    \dyy \w_R && \text{ in } [0,T] \times \TR\\
    \w_R |_{t=0} & = \w_0 := \dy u_0 && \text{ on } \TR\\
    \dy \w_R |_{y=0} & = \dx \pep && \text{ on } [0,T] \times \T\\
\end{aligned}\right.
\end{equation}
where the velocity field $(u_R, v_R)$ is given by
%
%
\begin{equation} \label{e:TRuv}
    u_R (t,x,y) := U - \int^{+\infty}_y \w_R (t,x,\tilde{y}) \, d\tilde{y} \quad \text{ and }
    \quad v_R (t,x,y) := - \int^y_0 \dx u_R (t,x,\tilde{y})\, d\tilde{y}.
\end{equation}
Here, $\pep$ and $U$ still satisfy the regularized Bernoulli's law \eqref{e:RBer}. The
cutoff function $\chi_R$ is defined by $\chi_R (y) := \chi (\frac{y}{R})$ where
$\chi \in C^\infty_c ([0, +\infty))$ satisfies the following properties:
%
%
\begin{equation} \label{e:chi properties}
     0 \le \chi \le 1, \qquad \chi |_{[0,1]} \equiv 1, \qquad
    supp\; \chi \subseteq [0,2] \quad \text{ and } \quad
     -2 \le \chi' \le 0.
\end{equation}

The main disadvantage of approximate system \eqref{e:TRPE} - \eqref{e:TRuv} is that the truncation
on the convection term $u_R \dx \w_R + v_R \dy \w_R$ destroys the boundary condition
$u_R|_{y=0} = 0$ as well as our weighted $H^s$ a priori estimate. However, it still
keeps the weighted $L^\infty$ controls.

To compensate for the lack of our new weighted $H^s$ estimates, one may apply the
standard $H^s$ energy estimates because the system \eqref{e:TRPE} - \eqref{e:TRuv}
does not have the problem of $x$-derivative loss. These estimates depend on $\e$, but
not on $R$. Thus, passing to the limit $R \to + \infty$ for the solution of \eqref{e:TRPE} -
\eqref{e:TRuv} to that of \eqref{e:Rvort} - \eqref{e:Ruv} should be no doubt.
The reason of doing this approximation is to prepare for our next approximate system.

The third level of approximation is the linearized, truncated and regularized
vorticity system: for any $\e > 0, R \ge 1$ and $n \in \N$,
%
%
\begin{equation} \label{e:LTRPE}
\left\{\begin{aligned}
    \dt \w^{n+1} + \chi_R \{u^n \dx \w^n +v^n \dy \w^n \} & = \e^2 \dxx \w^{n+1}
    + \dyy \w^{n+1} && \text{ in } [0,T] \times \TR\\
    \w^{n+1} |_{t=0} & = \w_0 := \dy u_0 && \text{ on } \TR\\
    \dy \w^{n+1} |_{y=0} & = \dx \pep && \text{ on } [0,T] \times \T
\end{aligned}\right.
\end{equation}
where the velocity field $(u^n, v^n)$ is given by
%
%
\begin{equation} \label{e:LTRuv}
    u^n (t,x,y) := U - \int^{+\infty}_y \w^n (t,x,\tilde{y}) \, d\tilde{y} \quad
    \text{ and } \quad v^n (t,x,y) := - \int^y_0 \dx u^n (t,x,\tilde{y}) \, d\tilde{y}.
\end{equation}
In other words, \eqref{e:LTRPE} - \eqref{e:LTRuv} is a linearization of
\eqref{e:TRPE} - \eqref{e:TRuv}.

The main advantage of the iterative scheme  \eqref{e:LTRPE} - \eqref{e:LTRuv}  is that
its explicit solution formula can be obtained by the method of reflection.
Using the explicit solution formula and the fact that $\chi_R
\{u^n \partial_x u^n + v^n \partial_y u^n\}$ has compact
support, one may prove that there exists a uniform (in $n$) life-span $T >0$ for the
approximate sequence $\{\w^n\}_{n \in \N}$ provided that
$\omega_0 \in H^{s,\gamma}_{\sigma, 2\delta}$. This gives us a starting
point so that we can solve the approximate systems and derive estimates.

Solving the above approximate systems in a reverse order and deriving appropriate
estimates, we can prove the existence to the
Prandtl equations \eqref{e:PE}. Detailed
analysis for solving the regularization \eqref{e:RPE} as well as other approximate
systems \eqref{e:Rvort} - \eqref{e:Ruv}, \eqref{e:TRPE} - \eqref{e:TRuv} and
\eqref{e:LTRPE} - \eqref{e:LTRuv} will be given in section \ref{s:exist RPE}. Assuming that
$(\uep, \vep, \wep)$ solves \eqref{e:RPE} - \eqref{e:Ruv}, we will derive uniform
(in $\e$) weighted estimates in section \ref{s:RPE}. Based on these uniform estimates,
we will complete the proof of our main
theorem \ref{t:Hs exist PS} in section \ref{s:pf main thm}.

%
%
\section{Uniform Estimates on the Regularized Prandtl Equations}\label{s:RPE}
In this section and the next we are going to complete the proof of our main theorem
\ref{t:Hs exist PS} provided that we have a solution of the regularized Prandtl equations
\eqref{e:RPE}. In this section we will derive uniform estimates for the regularized
Prandtl equations  \eqref{e:RPE} by using the new weighted energy \eqref{e:Hgsgamma}
introduced in section \ref{s:outline}. These estimates are the main novelty of
this paper. Then we will finish the proof of theorem \ref{t:Hs exist PS} in
section \ref{s:pf main thm}. After that, an outline for solving the
regularized Prandtl equations \eqref{e:RPE} will be provided in section \ref{s:exist RPE}.

Our starting point is that we can solve the velocity field $(\uep, \vep)$ from
the regularized Prandtl equations \eqref{e:RPE}. More precisely, let us assume proposition
\ref{p:loc sol RPE}, which will be shown in section \ref{s:exist RPE}, below for the moment.
%
\begin{prop}[Local Existence of the Regularized Prandtl Equations]\label{p:loc sol RPE}
Let $s\ge 4$ be an even integer, $\gamma \ge 1, \sigma > \gamma + \frac{1}{2},
\delta \in (0, \frac{1}{2})$ and $\e \in (0,1]$. If
$\w_0 \in H^{s+12,\gamma}_{\sigma, 2\delta}, U$ and $\pep$ are given and satisfy the
regularized Bernoulli's law \eqref{e:RBer} and the regularity assumption \eqref{e:supU},
then there exist a time
$T := T(s,\gamma,\sigma,\delta, \e, \|\w_0\|_{H^{s+4,\gamma}},U) > 0$ and a solution
$\wep \in C([0,T]; H^{s+4,\gamma}_{\sigma,\delta}) \cap C^1 ([0,T]; H^{s+2,\gamma})$
to the regularized vorticity system \eqref{e:Rvort}-\eqref{e:Ruv}.

Furthermore, the velocity $(\uep,\vep)$ defined by \eqref{e:Ruv} satisfies the regularized
Prandtl equations \eqref{e:RPE} as well.
\end{prop}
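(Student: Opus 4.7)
The plan is to execute the three-level approximation outlined in Section \ref{s:approx}, solving the innermost linearized scheme first and then passing to the limits one level at a time.

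First I would construct the sequence $\{\w^n\}$ from the linearized, truncated, regularized system \eqref{e:LTRPE}--\eqref{e:LTRuv}. For each $n$, the equation for $\w^{n+1}$ is a linear parabolic equation with anisotropic diffusion $\e^2 \dxx + \dyy$, given source $-\chi_R\{u^n \dx \w^n + v^n \dy \w^n\}$ whose $y$-support lies in $[0,2R]$, and Neumann datum $\dy \w^{n+1}|_{y=0} = \dx \pep$. The method of reflection across $y=0$ combined with Duhamel's formula supplies an explicit representation, and an induction on $n$ bounds $\w^n$ in $C([0,T_0]; H^{s+12,\gamma})$ uniformly in $n$ on a time interval $T_0 > 0$ depending on $\e, R, \|\w_0\|_{H^{s+12,\gamma}}$ and $U$.

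Second, differencing successive iterates produces a linear parabolic equation for $\w^{n+1}-\w^n$ whose source is $\chi_R$-localized and linear in $\w^n - \w^{n-1}$. A standard contraction estimate in $C([0,T_1];L^2)$ for $T_1 \le T_0$ sufficiently small yields a Cauchy sequence whose limit $\w_R$ solves the truncated and regularized system \eqref{e:TRPE}--\eqref{e:TRuv}.

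Third, and this is the main obstacle, I would derive uniform-in-$R$ bounds for $\w_R$. Because $\chi_R$ destroys both the no-slip condition $u_R|_{y=0}=0$ and the nonlinear cancelation behind the new energy $\|\cdot\|_\Hg$, that estimate is unavailable at this level. However, the $\e^2 \dxx$ dissipation provides enough $x$-regularity to absorb the loss of $x$-derivative in the $v_R \dy \w_R$ term, so a standard weighted $H^{s+4,\gamma}$ energy estimate closes. Commutators $[\chi_R, \Dalpha]$ produce terms proportional to $\chi_R^{(k)}(y) = R^{-k}\chi^{(k)}(y/R)$, which are controlled uniformly in $R$ thanks to \eqref{e:chi properties}. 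The pointwise constraints $(1+y)^\sigma \w_R \geq \delta$ and $\sum_{|\alpha| \leq 2}|(1+y)^{\sigma+\alpha_2}\Dalpha \w_R|^2 \leq \delta^{-2}$ are preserved by parabolic maximum principles applied to $(1+y)^\sigma \w_R$ and to the squared weighted norm of the low derivatives, viewed as solutions of linear parabolic equations whose coefficients are dominated by the $H^{s+4,\gamma}$-energy. This is precisely where the extra regularity $\w_0 \in H^{s+12,\gamma}_{\sigma,2\delta}$ is consumed: each maximum-principle bootstrap step loses a few derivatives. Gronwall closure then produces a life-span $T_\e > 0$ independent of $R$.

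Finally, I would pass $R \to \infty$ via weak-$\ast$ compactness in $L^\infty([0,T_\e]; H^{s+4,\gamma}_{\sigma,\delta})$ combined with a standard strong-compactness argument in lower-order spaces needed to identify the nonlinear convection term in the limit. The limit $\wep$ solves \eqref{e:Rvort}--\eqref{e:Ruv} in the claimed function spaces, and the pointwise $\delta$-constraints transfer to the limit. Defining $\uep$ and $\vep$ via \eqref{e:Ruv} and invoking the regularized Bernoulli law \eqref{e:RBer} to match the outer condition, a direct computation verifies that $(\uep, \vep)$ satisfies the regularized Prandtl system \eqref{e:RPE}; in particular, the Bernoulli law is exactly the compatibility needed for $\lim_{y \to +\infty} \uep = U$ and the pressure forcing to coexist with $\uep|_{y=0}=0$.
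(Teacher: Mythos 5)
Your proposal follows essentially the same route as the paper: solve the linearized/truncated/regularized iteration via reflection and Duhamel, contract to get $\w_R$, derive $R$-independent weighted $H^{s+4,\gamma}$ energy bounds (closed by the $\e^2\dxx$ dissipation) together with maximum-principle control of the pointwise $\delta$-constraints, pass $R\to\infty$ by compactness, and recover \eqref{e:RPE} from the vorticity formulation using the regularized Bernoulli law. The only step you compress is the recovery of the no-slip condition, which in the paper is obtained by showing $\uep|_{y=0}$ solves the viscous Burgers equation with zero data and invoking its uniqueness; this is a detail of execution, not a difference of approach.
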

%
\begin{rem}[Initial Data]
The $H^{s,\gamma}_{\sigma,2\delta}$ functions can be approximated by
$H^{s+12,\gamma}_{\sigma,2\delta}$ functions in the norm
$\|\cdot\|_{H^{s,\gamma}}$, so by the standard density argument, the hypothesis $\w_0 \in
H^{s+12,\gamma}_{\sigma, 2\delta}$ can be reduced to be $\w_0 \in
H^{s,\gamma}_{\sigma, 2\delta}$ in our final result.
\end{rem}

According to proposition \ref{p:loc sol RPE}, the life-span
$T_{s,\gamma,\sigma,\delta, \e, \w_0, U}$ of $\wep$ depends on $\e$,
so our aim in  this section is to remove the $\e$-dependence by deriving uniform (in $\e$)
estimates on $\wep$. In other words, we will prove the following
%
\begin{prop}[Uniform Estimates on the Regularized Prandtl Equations]\label{p:Uniform RPE}
Let $s\ge4$ be an even integer, $\gamma \ge 1, \sigma > \gamma + \frac{1}{2}, \delta \in
(0,1)$ and $\e \in [0,1]$. If $\wep \in C([0,T]; H^{s+4,\gamma}_{\sigma, \delta}) \cap C^1 ([0,T];
H^{s+2,\gamma}_{\sigma,\delta})$ and $(\uep, \vep, \wep)$ solves \eqref{e:RPE} - \eqref{e:Ruv},
 then we have the following uniform (in $\e$) estimates:
\begin{itemize}
\item[(i)] (Weighted $H^s$ Estimates)
%
    \begin{equation} \label{e:Hsgamma}
    \begin{split}
        & \; \|\wep(t)\|_{\Hg} \\
         \le & \; \left\{\|\w_0\|_{\Hg}^2 + \int^t_0 F(\tau)\,d\tau \right\}^{\frac{1}{2}}
        \left\{1 - \constC \left( \|\w_0\|_{\Hg}^2 + \int^t_0 F(\tau)\, d\tau \right)^{\frac{s-2}{2}} t
        \right\}^{-\frac{1}{s-2}}
    \end{split}
    \end{equation}
    as long as the second braces on the right hand side of \ref{e:Hsgamma} is positive, where
    the positive constant $\constC$ depends on $s$, $\gamma$, $\sigma$, $\delta$ only and
    $F :[0,T] \to \R^+$ is defined by
%
    \begin{equation} \label{e:F eqn}
        F := \constC \{1+\|\dx^{s+1} U\|_{L^\infty}^4\} + C_{s} \sum^{\frac{s}{2}}_{l=0}
        \|\dt^l \dx \pep\|^2_{H^{s-2l} (\T)}.
    \end{equation}
\item[(ii)] (Weighted $L^\infty$ Estimates)
Define $I(t) := \sum_{|\alpha| \le 2} |(1+y)^{\sigma + \alpha_2} \Dawe (t)|^2$. For
any $s \ge 4$,
%
%
\begin{equation} \label{e:I s4}
    \|I(t)\|_{L^\infty (\TR)} \le \max \{\|I(0)\|_{L^\infty (\TR)}, 6C^2 \Omega (t)^2\}
    e^{\constC \{1+G(t)\}t}
\end{equation}
where the universal constant $C$ is the same as the one in inequality \eqref{e:Sobolev}, $\Omega$
and $G :[0,T] \to \R^+$ are defined by
%
%
\begin{equation} \label{e:G eqn}
    \Omega (t) := \sup_{[0,t]}\|\wep\|_{\Hg} \qquad \text{ and } \qquad
    G(t) := \sup_{[0,t]} \|\wep\|_{\Hg} + \sup_{[0,t]} \|\dx^s U\|_{L^2 (\T)}.
\end{equation}
In addition, if $s \ge 6$, then we also have
%
%
\begin{equation} \label{e:weighted Linfty s6}
    \|I(t)\|_{L^\infty (\TR)}
    \le (\|I(0)\|_{L^\infty (\TR)} + C_{s, \gamma} \{1+\Omega(t)\} \Omega(t)^2 t)
    e^{\constC \{1+G(t)\}t}.
\end{equation}
For $s \ge 4$, we have the following lower bound estimate:
%
%
 \begin{equation} \label{e:weighted Linfty s4}
 \begin{split}
    & \; \min_{\TR} (1+y)^\sigma \wep (t) \\
    \ge & \; \left(1-\constC \{1+ G(t)\} t e^{\constC \{1+G(t)\}t} \right) \cdot
    \left( \min_{\TR} (1+y)^\sigma \w_0 -C_{s, \gamma} \Omega(t) t \right)
 \end{split}
\end{equation}
provided that $\min_{\TR} (1+y)^\sigma \w_0 - C_{s, \gamma} \Omega(t) t \ge 0$, where $\constC$
is a positive constant depending on $s, \gamma, \sigma$ and $\delta$ only.
\end{itemize}
\end{prop}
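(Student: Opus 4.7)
The proof strategy for part (i) is to derive a closed evolution equation for the regularized auxiliary quantity
\begin{equation*}
    g^\e_s := \dx^s \wep - \frac{\dy \wep}{\wep}\, \dx^s (\uep - U),
\end{equation*}
which is well-defined since $(1+y)^\sigma \wep \ge \delta > 0$. First I would apply $\dx^s$ to the $\uep$-equation in \eqref{e:RPE} and to the vorticity equation in \eqref{e:Rvort}, and separate the material derivative $\dt + \uep\dx + \vep\dy$. The top-order $x$-derivative-losing contributions come from $\dx^s\vep \cdot \dy \uep = \dx^s\vep \cdot \wep$ on the velocity side and from $\dx^s\vep \cdot \dy\wep$ on the vorticity side. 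Computing $(\dt+\uep\dx+\vep\dy)g^\e_s$, the product $\frac{\dy\wep}{\wep}\cdot(\dx^s\vep\cdot\wep) = \dx^s\vep\cdot\dy\wep$ cancels exactly the bad vorticity term; this is the algebraic manifestation of the Oleinik cancellation, legitimate because of monotonicity.

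What remains is an equation $(\dt + \uep\dx + \vep\dy - \e^2\dxx - \dyy) g^\e_s = \mathcal{R}$, where $\mathcal{R}$ collects Leibniz commutators and the commutator $[\e^2\dxx+\dyy, \dy\wep/\wep]\dx^s(\uep-U)$, all of which are polynomial combinations of weighted lower-order derivatives of $\wep$ and of the data $\dx^s U, \dx\pep$. Multiplying by $(1+y)^{2\gamma} g^\e_s$ and integrating over $\TR$ yields a positive dissipative contribution from the viscous terms after integration by parts (the weights are handled by the Hardy-type inequalities, whose use is exactly why $\sigma > \gamma+\tfrac12$ is required); the transport term is harmless thanks to $\dx\uep+\dy\vep=0$ and $\vep|_{y=0}=0$; and the boundary term at $y=0$ is reduced by the reconstruction trick of lemma \ref{l:reduce bdry} to the norms $\|\dt^l\dx\pep\|_{H^{s-2l}(\T)}$ in $F$. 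In parallel, weighted $L^2$ estimates on $\Dalpha\wep$ with $|\alpha|\le s,\,\alpha_1\le s-1$ proceed by standard energy methods since at most $s-1$ $x$-derivatives occur. Combining these via weighted Moser/Sobolev inequalities produces a Bernoulli-type ODE
\begin{equation*}
    \frac{d}{dt}\|\wep\|_{\Hg}^2 \;\le\; \constC\, \|\wep\|_{\Hg}^s + F(t),
\end{equation*}
which integrates explicitly to \eqref{e:Hsgamma}; the exponent $1/(s-2)$ there is precisely the integrated-ODE exponent.

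For part (ii) I would differentiate the vorticity equation up to two times and view each $\Dalpha\wep$, $|\alpha|\le 2$, as solving a linear advection-diffusion equation with coefficients built from $\uep,\vep$ and source terms that are polynomial in the lower-order derivatives, all of which are controlled by $\|\wep\|_{\Hg}$ via Sobolev embedding for $s\ge 4$. Writing the equation for $(1+y)^{2(\sigma+\alpha_2)}(\Dawe)^2$ and summing over $|\alpha|\le 2$ produces a scalar parabolic inequality
\begin{equation*}
    \bigl(\dt + \uep\dx + \vep\dy - \e^2\dxx - \dyy\bigr) I \;\le\; \constC\{1+G(t)\}\, I + \mathcal{S}(t),
\end{equation*}
where $\mathcal{S}(t) \lesssim \Omega(t)^2$, and indeed $\mathcal{S}(t)\lesssim\{1+\Omega(t)\}\Omega(t)^2$ when $s\ge 6$ because extra derivatives can be absorbed into the $H^{s,\gamma}_g$-norm. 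The parabolic maximum principle, applied pointwise on $\TR$ with the inflow-type condition at $y=0$ handled by the reconstructed boundary data for $\dy\wep|_{y=0}=\dx\pep$, yields the Gronwall bounds \eqref{e:I s4} and \eqref{e:weighted Linfty s6}. The lower bound \eqref{e:weighted Linfty s4} follows from the minimum principle applied directly to the evolution of $(1+y)^\sigma\wep$: this quantity satisfies an advection-diffusion equation whose zero-order coefficient and forcing are controlled by $G$ and $\Omega$, so the minimum decays at most at the Gronwall rate displayed in \eqref{e:weighted Linfty s4}.

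The main obstacle is the derivation and bookkeeping of the evolution of $g^\e_s$: one must verify algebraically that the $x$-derivative loss cancels (the monotonicity-driven cancellation) and then control the commutators $[\e^2\dxx+\dyy,\dy\wep/\wep]\dx^s(\uep-U)$, which is where the ``extra terms'' $\dx\wep/\wep,\dxx\wep/\wep,\dx\dy\wep/\wep,\dyy\wep/\wep$ mentioned in section \ref{s:approx} appear. These are precisely the quantities bounded by the weighted $L^\infty$ estimates of part (ii), so parts (i) and (ii) are genuinely coupled, and the uniformity in $\e$ rests on the fact that the regularization $\e^2\dxx$ respects the cancellation structure. The remaining work---closing the Hardy inequalities under the weights $(1+y)^{\gamma+\alpha_2}$ and tracking the boundary contributions through $\dx\pep$---is routine once the correct nonlinear quantity $g^\e_s$ is in play.
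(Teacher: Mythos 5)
Your treatment of part (i) is essentially the paper's own argument: the same quantity $g^\e_s$, the same cancellation of $\dx^s\vep\,\dy\wep$ against $\frac{\dy\wep}{\wep}\cdot\dx^s\vep\,\wep$, the same split into the weighted $L^2$ estimates on $\Dawe$ with $\alpha_1\le s-1$ plus the $g^\e_s$ estimate, the boundary reduction lemma for the top-order boundary integrals at $y=0$, and the resulting differential inequality $\frac{d}{dt}\|\wep\|^2_{\Hg}\le\constC\|\wep\|^s_{\Hg}+F$ integrated to give the exponent $-\frac{1}{s-2}$ in \eqref{e:Hsgamma}. One small caveat: the pointwise bounds on $\dx\wep/\wep$, $\dy\wep/\wep$, etc.\ that control the commutator terms are not imported from part (ii); they are part of the standing hypothesis $\wep(t)\in H^{s+4,\gamma}_{\sigma,\delta}$, so within this proposition parts (i) and (ii) are logically independent and only become coupled later, in the continuous-induction argument of proposition \ref{p:lifespan}.

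For part (ii) your skeleton (maximum principle for $I$, minimum principle for $(1+y)^\sigma\wep$) is correct, but there is a genuine gap in how you close the argument at the boundary $y=0$. The maximum principle on the half-space bounds $\|I(t)\|_{L^\infty(\TR)}$ by the maximum of its initial data and of $\|I(\tau)|_{y=0}\|_{L^\infty(\T)}$ over $\tau\in[0,t]$; the Neumann datum $\dy\wep|_{y=0}=\dx\pep$ does not by itself control $I|_{y=0}$, and the ``reconstructed boundary data'' you invoke does not remove this term. The paper controls $I|_{y=0}$ in two separate ways, and this is precisely where the two estimates \eqref{e:I s4} and \eqref{e:weighted Linfty s6} and the threshold $s\ge6$ come from: for $s\ge4$ one bounds $\|I|_{y=0}\|_{L^\infty(\T)}\le 6C^2\|\wep\|^2_{\Hg}$ by the Sobolev inequality \eqref{e:Sobolev}, producing the $6C^2\Omega(t)^2$ inside the maximum in \eqref{e:I s4}; for $s\ge6$ one instead integrates $\dt I|_{y=0}$ in time using the traced vorticity equation, and this requires $L^\infty$ control of $\dyy\Dawe|_{y=0}$ for $|\alpha|\le2$, i.e.\ derivatives of $\wep$ of order up to $4$, which by proposition \ref{p:L2Linfty uvwgk} needs $s\ge|\alpha|+4=6$. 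Your attribution of the $s\ge6$ improvement to a better interior source term $\mathcal{S}$ is therefore misplaced: in the paper's computation the source terms are absorbed into the coefficient of $I$ for all $s\ge4$, and the entire difference between \eqref{e:I s4} and \eqref{e:weighted Linfty s6} lives on the boundary. The same issue affects the lower bound: the minimum principle yields a bound in terms of $\min\{\min_{\TR}(1+y)^\sigma\w_0,\ \min_{[0,t]\times\T}\wep|_{y=0}\}$, and the term $-C_{s,\gamma}\Omega(t)t$ in \eqref{e:weighted Linfty s4} comes from separately integrating $\dt\wep|_{y=0}$ in time, a step your proposal omits.
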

%
%
\begin{rem}[Two $L^\infty$ Estimates on $I$]
In proposition \ref{p:Uniform RPE}, we stated two $L^\infty$ controls on the quantity $I(t)$, namely, estimates \eqref{e:I s4} and \eqref{e:weighted Linfty s6}. Indeed, \eqref{e:weighted Linfty s6} is a better estimate within a short time, but it only holds for $s\geq 6$. Thanks to this better estimate, we can derive the uniform weighted $L^\infty$ bound \eqref{e:weighted Linfty bound} without any additional assumption when $s\geq 6$. In contrast, we are required to impose an extra initial hypothesis \eqref{e:mainthm con} for the case $s=4$ since we only have the weaker estimate \eqref{e:I s4} in this case. See proposition \ref{p:lifespan} for the details.
\end{rem}
%
\begin{rem}[A Priori Estimates on the Prandtl Equations]
When $\e = 0$, proposition \ref{p:Uniform RPE} provides a priori estimates for the
Prandtl equations \eqref{e:PE}. Similar situation occurs in proposition
\ref{p:lifespan} as well.
\end{rem}

The proof of proposition \ref{p:Uniform RPE} will be given in the subsections \ref{ss:weighted est}
and \ref{ss:Linfty lower} as follows.

%
%
\subsection{Weighted Energy Estimates}\label{ss:weighted est}
The objective of this subsection is to derive uniform (in $\e$) weighted $H^s$ estimates on
$\wep$. These estimates, which are the main novelty of this paper, include: (i) weighted $L^2$
estimates on $\Dawe$ for $|\alpha| \le s$ and $\alpha_1 \le s-1$ in subsubsection
\ref{sss:Dwep}, and (ii) weighted $L^2$ estimate on $g_s$ in subsubsection \ref{sss:geps}.
We will combine these two estimates in subsubsection \ref{sss:Hs wep} to obtain
the uniform weighted energy estimates \eqref{e:Hsgamma}. This will complete the proof of part (i) of
proposition \ref{p:Uniform RPE}.

%
%
\subsubsection{Weighted $L^2$ Estimates on $\Dawe$}\label{sss:Dwep}
Using the standard energy method, we will derive weighted $L^2$ estimates on $\Dawe$
for $|\alpha| \le s$ and $\alpha_1 \le s-1$ in this subsubsection. It works because we
are allowed to loss at least one $x$-regularity in these cases.

More specifically, we will prove
%
\begin{prop}[$L^2$ Controls on $(1+y)^{\gamma+\alpha_2} \Dawe$ for $|\alpha| \le s$
and $\alpha_1 \le s-1$] \label{p:L2 Dwep}
Under the hypotheses of proposition \ref{p:Uniform RPE}, we have the following estimates:
%
\begin{equation}\label{e:L2 Dwep}
\begin{split}
    & \; \frac{1}{2} \dfrac{d}{dt} \sum_{\substack{|\alpha| \le s \\
    \alpha_1 \le s-1}} \|(1+y)^{\gamma+\alpha_2} \Dawe \|^2_{L^2}\\
    \le & \; -\e^2 \sum_{\substack{|\alpha| \le s \\
    \alpha_1 \le s-1}} \|(1+y)^{\gamma+\alpha_2} \dx \Dawe\|^2_{L^2}
    - \frac{1}{2} \sum_{\substack{|\alpha| \le s \\
    \alpha_1 \le s-1}} \|(1+y)^{\gamma+\alpha_2} \dy \Dawe\|^2_{L^2}\\
    & \; + \constC \{\|\wep\|_{\Hg} + \|\dx^s U\|_{L^\infty} \} \|\wep\|^2_{\Hg} \\
    & \; + \constC \{1+\|\wep\|_{\Hg} \}^{s-2} \|\wep\|^2_{\Hg}
    + C_s \sum^{\frac{s}{2}}_{l=0} \|\dt^l \dx \pep\|^2_{H^{s-2l}(\T)},
\end{split}
\end{equation}
where the positive constants $C_s$ and $\constC$ are independent of $\e$.
\end{prop}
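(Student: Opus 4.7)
The plan is to apply $\Dalpha$ to the regularized vorticity equation in \eqref{e:Rvort} for each fixed multi-index $\alpha$ with $|\alpha|\le s$ and $\alpha_1\le s-1$, then test against the weighted factor $(1+y)^{2(\gamma+\alpha_2)}\Dawe$ and integrate over $\TR$. The time derivative produces $\frac{1}{2}\frac{d}{dt}\|(1+y)^{\gamma+\alpha_2}\Dawe\|_{L^2}^2$. The viscous terms $\e^2\dxx\wep$ and $\dyy\wep$, after integration by parts in $x$ and $y$ respectively, yield the two negative dissipation terms appearing on the right-hand side of \eqref{e:L2 Dwep}, together with lower-order remainders coming from $\dy$ hitting $(1+y)^{2(\gamma+\alpha_2)}$; these remainders are absorbed into half of the $y$-dissipation via Young's inequality and the Hardy-type gain encoded by adding one power of $(1+y)$ per $y$-derivative, which explains the factor $\frac{1}{2}$ in front of $\|(1+y)^{\gamma+\alpha_2}\dy\Dawe\|_{L^2}^2$.

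The transport contribution $\uep\dx\Dawe+\vep\dy\Dawe$, paired with the test function, cancels to leading order thanks to incompressibility $\dx\uep+\dy\vep=0$ and the boundary condition $\vep|_{y=0}=0$, leaving only a weight-induced residual of the form $(\gamma+\alpha_2)\int (1+y)^{2(\gamma+\alpha_2)-1}\vep(\Dawe)^2\,dx\,dy$, which is harmless because $|\vep|/(1+y)$ is bounded by $\|\dx\uep\|_{L^\infty}\lesssim \|\wep\|_{\Hg}$ via weighted Sobolev embedding. The boundary contribution from the integration by parts in $y$ reduces to $-\int_\T \Dawe\,\dy\Dawe|_{y=0}\,dx$; when $\alpha_2=0$ we use the prescribed condition $\dy\wep|_{y=0}=\dx\pep$, while for $\alpha_2\ge 1$ we invoke the reconstruction lemma (cf.\ lemma \ref{l:reduce bdry}, applicable because $s$ is even) that expresses the odd-order normal derivatives $\dy^{2k+1}\wep|_{y=0}$ in terms of time and tangential derivatives of $\pep$ through repeated use of the equation. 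This is precisely where the pressure term $C_s\sum_{l=0}^{s/2}\|\dt^l\dx\pep\|_{H^{s-2l}(\T)}^2$ on the right-hand side of \eqref{e:L2 Dwep} comes from.

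The heart of the argument lies in estimating the commutators $[\Dalpha,\uep\dx]\wep$ and $[\Dalpha,\vep\dy]\wep$. Expanding them by Leibniz, every nontrivial piece $D^\beta \uep\cdot D^{\alpha-\beta}\dx\wep$ or $D^\beta \vep\cdot D^{\alpha-\beta}\dy\wep$ is bounded by placing the lower-order factor in a weighted $L^\infty$ controlled by the $\Hs$ hypothesis (which supplies $|(1+y)^{\sigma+\alpha_2}\Dalpha\wep|\le\delta^{-1}$ for $|\alpha|\le 2$) and the higher-order factor in the weighted $L^2$ controlled by $\|\wep\|_{\Hg}$; a weighted Moser/Gagliardo--Nirenberg interpolation then redistributes intermediate derivatives, yielding the polynomial factor $\constC\{1+\|\wep\|_{\Hg}\}^{s-2}\|\wep\|_{\Hg}^2$.

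The main obstacle is the top-order piece of $[\Dalpha,\vep\dy]\wep$ when $\beta=\alpha=(s-1,0)$, namely $\dx^{s-1}\vep\cdot\dy\wep$: writing $\dx^{s-1}\vep=-\int_0^y\dx^s\uep\,d\tilde y$ formally demands control of $\dx^s\uep$ in a weighted $L^2$, but this quantity is \emph{not} directly controlled by $\|\wep\|_{\Hg}$, whose top-$x$ slot is $g_s$ rather than $\dx^s\wep$. Here I would invoke the norm equivalence \eqref{e:wHg}, i.e.\ $\|\wep\|_{H^{s,\gamma}}+\|\uep-U\|_{H^{s,\gamma-1}}\lesssim \|\wep\|_{\Hg}+\|\dx^sU\|_{L^2}$, to convert this contribution into the advertised bound $\constC\{\|\wep\|_{\Hg}+\|\dx^sU\|_{L^\infty}\}\|\wep\|_{\Hg}^2$; the $L^\infty$ overhead on $U$ enters precisely from the shift between the two norms. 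Summing the resulting identity over all admissible $\alpha$ then produces \eqref{e:L2 Dwep}.
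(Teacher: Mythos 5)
Your overall strategy coincides with the paper's: the same weighted energy identity, the same integration by parts with the incompressibility cancellation for the transport term, the boundary reduction lemma for the high-order boundary integral, and the norm equivalence \eqref{e:wHg} to tame the $\dx^{s-1}\vep$ commutator contribution. Two steps, however, would fail or mislead as written.

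First, your treatment of the boundary integral for $\alpha_2\ge 1$ is not correct as stated: lemma \ref{l:reduce bdry} only reconstructs \emph{odd}-order normal derivatives $\dy^{2k+1}\wep|_{y=0}$, whereas $\dy\Dawe|_{y=0}=\dx^{\alpha_1}\dy^{\alpha_2+1}\wep|_{y=0}$ carries an odd number of $y$-derivatives only when $\alpha_2$ is even. For $|\alpha|\le s-1$ no reduction is needed at all, since the simple trace estimate \eqref{e:trace est} already controls the boundary integral by a small multiple of the dissipation plus $\|\wep\|^2_{\Hg}$; for $|\alpha|=s$ with $\alpha_2$ odd one must first integrate by parts in $x$ (possible because $s$ even and $\alpha_2$ odd force $\alpha_1\ge1$) so that the lemma can be applied to $\dx^{\alpha_1+1}\dy^{\alpha_2}\wep|_{y=0}$, which then has an odd number of $y$-derivatives. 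Second, you attribute the factor $\constC\{1+\|\wep\|_{\Hg}\}^{s-2}\|\wep\|^2_{\Hg}$ to a Moser-type interpolation of the commutators, and you describe the reduction lemma as expressing the boundary data purely through time and tangential derivatives of $\pep$. In fact the Leibniz commutators only ever produce the cubic term $\constC\{\|\wep\|_{\Hg}+\|\dx^s U\|_{L^\infty}\}\|\wep\|^2_{\Hg}$; the degree-$s$ polynomial arises precisely from the nonlinear products $\prod_i D^{\rho^i}\wep|_{y=0}$ appearing on the right-hand side of \eqref{e:reduce bdry}, which are estimated through the trace inequality. Your sketch omits these products, and they are an essential part both of the boundary reduction and of the final bound.
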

%
\begin{rem}[Boundary Terms at $y=+\infty$] \label{r:bdry infty}
In the proof of proposition \ref{p:L2 Dwep} and that of proposition \ref{p:L2 gs} below, we
will ignore the boundary terms at $y = + \infty$ while we are integrating by parts in the
 variable $y$. Skipping these boundary terms is just for the presentation convenience, and
ignoring these technicalities is harmless. Indeed, one may deal with these boundary terms
by any one of the following two methods:
\begin{itemize}
\item[(i)] Since $\wep \in \Hsfour$, by proposition \ref{p:decay Hs+3}, we have nice
pointwise decays \eqref{e:decay Hs+3} for $\wep$ and its spatial derivatives.
Therefore, when $\sigma$ is much larger than $\gamma$, one may easily check that
those terms which we will omit actually vanish;
\item[(ii)] As long as $\wep(t) \in \Hs$, the norm $\|\wep\|_{\Hg} <+\infty$ provides
certain integrability of the underlying quantities. Thus, one may overcome the technical
difficulty by first multiplying by a  nice cutoff function $\chi_R (y) := \chi (\frac{y}{R})$
during the estimation, and then passing to the limit $R$ $\to$ $+\infty$. The main advantage of
this approach is that it only requires $\sigma > \gamma + \frac{1}{2}$ and $\wep(t)$ in
$\Hs$, but not in $\Hsfour$. As a demonstration, we will apply this argument in the
proof of proposition \ref{p:L2 compare} for the reader's convenience.
\end{itemize}
In conclusion, the proofs of proposition \ref{p:L2 Dwep} and \ref{p:L2 gs} are absolutely
correct, even if we ignore the boundary terms at $y = + \infty$.
\end{rem}
%
\begin{proof}[Proof of proposition \ref{p:L2 Dwep}]
Differentiating the vorticity equation $\eqref{e:Rvort}_1$ with respect to $x\; \alpha_1$
times and $y\; \alpha_2$ times, we obtain the evolution equation for $\Dawe$:
%
\begin{equation}\label{e:DRvort alpha}
     \; \{\dt + \uep\dx + \vep \dy - \e^2 \dxx  - \dyy \} \Dawe
    =  \; - \sum_{0<\beta \le \alpha} \binom{\alpha}{\beta}
    \{D^\beta \uep \dx D^{\alpha-\beta} \wep + D^\beta \vep \dy D^{\alpha-\beta}\wep\}.
\end{equation}
Multiplying \eqref{e:DRvort alpha} by $(1+y)^{2\gamma+2\alpha_2} \Dawe$, and then
integrating over $\TR$, we have
%
\begin{equation}\label{e:MI DRvort alpha}
\begin{split}
    & \; \frac{1}{2} \dfrac{d}{dt} \|(1+y)^{\gamma+\alpha_2} \Dawe\|^2_{L^2} \\
    = & \; \e^2 \iint (1+y)^{2\gamma+2\alpha_2} D^\alpha \wep \dxx \Dawe
    + \iint (1+y)^{2\gamma+2\alpha_2} D^\alpha \wep \dyy \Dawe\\
    & \; - \iint (1+y)^{2\gamma+2\alpha_2} \Dawe
    \{\uep \dx \Dawe + \vep \dy \Dawe\} \\
    & \; - \sum_{0 < \beta \le \alpha} \binom{\alpha}{\beta}
    \iint (1+y)^{2\gamma+2\alpha_2} \Dawe
    \{D^\beta \uep \dx D^{\alpha-\beta} \wep + D^\beta \vep \dy D^{\alpha-\beta} \wep\}.
\end{split}
\end{equation}

Now that we can apply integration by parts and the standard Sobolev's type estimates
on trilinear forms to control the right hand side of \eqref{e:MI DRvort alpha} as follows.
%
\begin{claim} \label{claim:rhs MI DRvort alpha}
There exist constants $C_{s,\gamma}$ and $\constC > 0$ such that for any
$|\alpha| \le s$ and $\alpha_1 \le s-1$,
%
\begin{equation} \label{e:rhs MDa1}
    \e^2 \iint (1+y)^{2\gamma+2\alpha_2} \Dawe \dxx \Dawe
    = - \e^2 \| (1+y)^{\gamma+\alpha_2} \dx \Dawe \|^2_{L^2}.
\end{equation}
%
\begin{equation} \label{e:rhs MDa2}
\begin{split}
    & \; \iint (1+y)^{2\gamma+2\alpha_2} \Dawe \dyy \Dawe\\
    & \quad \quad \quad   \le  \; - \frac{3}{4} \|(1+y)^{\gamma+\alpha_2} \dy \Dawe\|^2_{L^2}
    - \int_{\T} \Dawe \dy \Dawe \; dx \big|_{y=0} + C_{s,\gamma} \|\wep\|^2_{\Hg}.
\end{split}
\end{equation}
%
\begin{equation} \label{e:rhs MDa3}
\begin{split}
    & \; \left|\iint (1+y)^{2\gamma+2\alpha_2} \Dawe \{\uep\dx \Dawe
    +\vep \dy \Dawe\}\right|\\
    & \quad \quad \quad   \le  \; \constC \{\|\wep\|_{\Hg} + \|\dx^s U\|_{L^2}\} \|\wep\|^2_{\Hg}
\end{split}
\end{equation}
%
\begin{equation} \label{e:rhs MDa4}
\begin{split}
    &\; \left| \sum_{0<\beta \le \alpha} \binom{\alpha}{\beta}
    \iint (1+y)^{2\gamma+2\alpha_2} \Dawe \{D^\beta \uep \dx
    D^{\alpha-\beta} \wep + D^\beta \vep \dy D^{\alpha-\beta} \wep \} \right|\\
    & \quad \quad \quad   \le  \; \constC \{\|\wep\|_{\Hg} + \|\dx^s U\|_{L^\infty} \}
    \|\wep\|_{\Hg}^2.
\end{split}
\end{equation}

\end{claim}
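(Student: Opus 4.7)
My plan is to establish the four inequalities one at a time, exploiting integration by parts, the boundary conditions $\uep|_{y=0}=\vep|_{y=0}=0$, the divergence-free constraint $\dx\uep+\dy\vep=0$, and the norm equivalence \eqref{e:wHg}. Throughout I will follow Remark \ref{r:bdry infty} and suppress the $y=+\infty$ boundary contributions.

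Inequality \eqref{e:rhs MDa1} is immediate integration by parts in the periodic variable $x$, since the weight depends only on $y$. For \eqref{e:rhs MDa2}, I integrate once in $y$, retaining the $y=0$ boundary trace and producing the interior term $-\|(1+y)^{\gamma+\alpha_2}\dy\Dawe\|_{L^2}^2$ together with a cross term $-(2\gamma+2\alpha_2)\iint(1+y)^{2\gamma+2\alpha_2-1}\Dawe\,\dy\Dawe$. Cauchy--Schwarz plus Young's inequality converts the cross term into $\tfrac14\|(1+y)^{\gamma+\alpha_2}\dy\Dawe\|_{L^2}^2+C_{s,\gamma}\|(1+y)^{\gamma+\alpha_2-1}\Dawe\|_{L^2}^2$, turning the leading $-1$ coefficient into $-\tfrac34$; the residual weighted $L^2$ norm is dominated by $\|\wep\|_{H^{s,\gamma}}^2$, and hence by $\|\wep\|_{\Hg}^2$ up to an additive $\|\dx^s U\|_{L^2}^2$ using \eqref{e:wHg}, which absorbs cleanly at lower order.

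The principal cancellation occurs in \eqref{e:rhs MDa3}. I rewrite the integrand as $\tfrac12(1+y)^{2\gamma+2\alpha_2}(\uep\dx+\vep\dy)(\Dawe)^2$ and integrate by parts in $x$ and $y$. The $y=0$ boundary contribution vanishes because $\vep|_{y=0}=0$, and the two divergence terms $\dx\uep$ and $\dy\vep$ produced from $\uep$ and $\vep$ cancel by incompressibility. What is left is $-(\gamma+\alpha_2)\iint (1+y)^{2\gamma+2\alpha_2-1}\vep(\Dawe)^2$, bounded in absolute value by $\|(1+y)^{-1}\vep\|_{L^\infty}\,\|\wep\|_{H^{s,\gamma}}^2$. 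Since $\vep|_{y=0}=0$, the pointwise bound $|\vep(y)|\le y\|\dx\uep\|_{L^\infty_y}$ gives $\|(1+y)^{-1}\vep\|_{L^\infty}\lesssim\|\dx\uep\|_{L^\infty}$, and Sobolev embedding on $\TR$ applied to $\dx(\uep-U)=-\int_y^\infty\dx\wep$ (using $s\ge 4$) together with Sobolev on $\T$ for the $U$-contribution yields $\|\dx\uep\|_{L^\infty}\lesssim\|\wep\|_{\Hg}+\|\dx^s U\|_{L^2}$, closing the estimate.

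For the commutator sum \eqref{e:rhs MDa4}, I split into pieces according to where derivatives fall. Because $\alpha_1\le s-1$, the sum does not contain a term forcing all $s$ derivatives on $\uep$ or $\vep$ to be in $x$; that dangerous case is precisely the one to be treated via $g_s$ in the next subsubsection. For each remaining term I apply H\"older's inequality, placing the factors of lower order into $L^\infty$ via $H^2(\TR)\hookrightarrow L^\infty$, and convert $D^\beta\vep$ into a weighted integral of $D^\beta\dx\uep$ through $\vep=-\int_0^y\dx\uep$ combined with a Hardy inequality using the weight $(1+y)^{-1}$, so that every factor is absorbed into $\|\wep\|_{\Hg}$ or into $\|\dx^s U\|_{L^\infty}$ for the piece of $D^\beta\uep$ coming from $U$. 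The main obstacle throughout is the bookkeeping in \eqref{e:rhs MDa3} and \eqref{e:rhs MDa4}: each occurrence of $\vep$ or $D^\beta\vep$ must be recast via $\vep=-\int_0^y\dx\uep$ so that the Hardy trick replaces $(1+y)^{-1}\vep$ by a quantity controlled in $\|\wep\|_{\Hg}$ without losing $x$-regularity, and all weight powers must align so that the residual weighted $L^2$ norms fit the definition \eqref{e:Hgsgamma} of $\|\cdot\|_{\Hg}$.
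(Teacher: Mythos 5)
Your treatments of \eqref{e:rhs MDa1}, \eqref{e:rhs MDa2} and \eqref{e:rhs MDa3} are correct and essentially identical to the paper's: integration by parts in $x$; integration by parts in $y$ with Young's inequality on the cross term coming from the weight; and the transport cancellation via $\dx\uep+\dy\vep=0$ followed by an $L^\infty$ bound on $\frac{\vep}{1+y}$ (your pointwise route through $|\vep|\le y\|\dx\uep\|_{L^\infty_y}$ is a harmless variant of the paper's appeal to \eqref{e:B3 Linfty v}).

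The problem is in \eqref{e:rhs MDa4}. Your structural claim --- that because $\alpha_1\le s-1$ the commutator sum contains no term with all $s$ derivatives falling in $x$ on $\uep$ or $\vep$, and that the dangerous case is deferred to the $g_s$ estimate --- is false. Take $\alpha=(s-1,1)$ and $\beta=\alpha$: then $D^\beta\vep=\dx^{s-1}\dy\vep=-\dx^{s}\uep$, so the term $-\dx^{s}\uep\,\dy\wep$ appears; likewise $\beta=(s-1,0)$ produces $\dx^{s-1}\vep\,\dy^{\alpha_2+1}\wep$, and $\dx^{s-1}\vep$ already carries $s$ horizontal derivatives of $u$. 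These are exactly the borderline terms the paper isolates as the $\eta=s$ case of its Type II and the $\eta=s-1$ case of its Type I. They cannot be put in $L^\infty$; they must be placed in $L^2$ and estimated via $\|\dx^{s}(\uep-U)\|_{L^2}$ (after splitting off $\dx^sU$, which is why $\|\dx^sU\|_{L^\infty}$ --- not $\|\dx^sU\|_{L^2}$ --- appears on the right of \eqref{e:rhs MDa4}). Crucially, $\|(1+y)^{\gamma-1}\dx^s(\uep-U)\|_{L^2}$ is \emph{not} controlled by the partial sum $\sum_{|\alpha|\le s,\,\alpha_1\le s-1}\|(1+y)^{\gamma+\alpha_2}\Dawe\|_{L^2}$; it is controlled only through the $g_s$ component of $\|\cdot\|_{\Hg}$, via lemma \ref{l:L2 dxk u-U dxk w gk} (Hardy with the weight $\wep^{-1}$, using monotonicity). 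So these terms must be handled \emph{here}, in this claim, using the almost-equivalence \eqref{e:wHg}; what is genuinely deferred to the $g_s$ subsubsection is only the term $\dx^s\vep\,\dy\wep$ in the $\alpha=(s,0)$ equation, which carries $s+1$ horizontal derivatives of $u$. As written, your plan would either miss these terms or, upon discovering them, lack an argument for why $\dx^s(\uep-U)$ lies in weighted $L^2$ with a bound by $\|\wep\|_{\Hg}$; that step is the substantive content of the Type I/II estimates in the paper and needs to be supplied.
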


Assuming claim \ref{claim:rhs MI DRvort alpha}, which will be shown later in this
section, for the moment, we can apply inequalities \eqref{e:rhs MDa1} - \eqref{e:rhs MDa4}
to the equality \eqref{e:MI DRvort alpha}, and obtain
%
\begin{equation}\label{e:L2 dt Dwep}
\begin{split}
    & \; \frac{1}{2} \dfrac{d}{dt} \|(1+y)^{\gamma+\alpha_2} \Dawe\|^2_{L^2}\\
    \le & \; - \e^2 \|(1+y)^{\gamma+\alpha_2} \dx \Dawe\|_{L^2}
    - \frac{3}{4} \|(1+y)^{\gamma+\alpha_2} \dy \Dawe\|^2_{L^2}\\
    & \; - \int_{\T} \Dawe \dy \Dawe \; dx \big|_{y=0}
    +C_{s,\gamma} \|\wep\|^2_{\Hg} + \constC \{\|\wep\|_{\Hg} + \|\dx^s U \|_{L^\infty}\}
     \|\wep\|^2_{\Hg}.
\end{split}
\end{equation}

When $|\alpha| \le s-1$, we can apply the simple trace estimate
%
\begin{equation} \label{e:trace est}
    \int_{\T} |f| \; dx \Big|_{y=0} \le C \left\{\int^1_0 \int_{\T} |f| \;dxdy
    + \int^1_0 \int_{\T} |\dy f| \; dxdy \right\}
\end{equation}
to control the boundary integral $\int_{\T} \Dawe\, \dy \Dawe\, dx \big|_{y=0}$ as
follows:
%
%
\begin{equation} \label{e:bdry int Dawe}
    \left| \int_\T \Dawe \dy \Dawe\, dx |_{y=0} \right|
    \le \frac{1}{12} \|(1+y)^{\gamma + \alpha_2 +1} \dyy \Dawe\|^2_{L^2}
    + C \|\wep\|^2_{\Hg}.
\end{equation}
However, when $|\alpha| =s$, a main difficulty arises: the order of
$\dy \Dawe|_{y=0}$ is too high so that we cannot control the boundary integral
$\int_{\T} \Dawe \dy \Dawe\; dx|_{y=0}$ by the simple trace estimate
\eqref{e:trace est}. In order to make use of \eqref{e:trace est}, we must reduce the
order of the problematic term $\dy \Dawe|_{y=0}$. When $s$ is even, this
can be done by a boundary reduction argument as follows.

At this moment, let us state without proof the following boundary reduction lemma,
which will be proven at the end of this subsubsection.
%
\begin{lem}[Reduction of Boundary Data] \label{l:reduce bdry}
Under the hypotheses of proposition \ref{p:Uniform RPE}, we have at the boundary
$y = 0$,
%
%
\begin{equation}\label{e:reduce bdry 13}
\left\{\begin{aligned}
    \dy\wep|_{y=0} & = \dx \pep\\
    \dy^3 \wep|_{y=0} & = (\dt - \e^2 \dxx) \dx \pep + \wep \dx \wep|_{y=0}.
\end{aligned}\right.
\end{equation}
For any $2 \le k \le [\frac{s}{2}]$, there are some constants
$C_{k,l,\rho^1,\rho^2, \cdots, \rho^j}$'s,
which do not depend on $\e$ or $(\uep,\vep,\wep)$, such that
%
\begin{equation}\label{e:reduce bdry}
    \dy^{2k+1} \wep|_{y=0} = (\dt-\e^2\dxx)^k \dx\pep
    + \sum^{k-1}_{l=0} \e^{2l} \sum^{\max\{2,k-l\}}_{j=2}\sum_{\rho \in A^j_{k,l}}
    C_{k,l,\rho^1,\rho^2, \cdots, \rho^j}  \prod^j_{i=1} D^{\rho^i} \wep|_{y=0}
\end{equation}
where $A^j_{k,l} := \{\rho:=(\rho^1,\rho^2,\cdots,\rho^j)\in\N^{2j};\,
3\sum^j_{i=1} \rho^i_1 + \sum^j_{i=1} \rho^i_2 = 2k+4l+1$, $\sum^j_{i=1} \rho^i_1 \le k + 2l -1$, $\sum^j_{i=1} \rho^i_2 \le 2k-2l-2$ and
$|\rho^i| \le 2k-l-1$ for all $i=1,2,...,j\}$.

\end{lem}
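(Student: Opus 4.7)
My plan is to prove \eqref{e:reduce bdry 13} and \eqref{e:reduce bdry} by induction on $k$, using the rearranged vorticity equation at the wall to trade each pair of $\dy$-derivatives for a combination of $(\dt-\e^2\dxx)$ and quadratic corrections.

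The basic tool is the vorticity equation from \eqref{e:Rvort} rearranged as
\begin{equation*}
    \dyy\wep = (\dt-\e^2\dxx)\wep + \uep\dx\wep + \vep\dy\wep,
\end{equation*}
together with the trace identities at $y=0$ (consequences of \eqref{e:Ruv} and $\uep|_{y=0}=0$):
\begin{equation*}
    \uep|_{y=0}=\vep|_{y=0}=\dy\vep|_{y=0}=0, \quad \dy^m\uep|_{y=0}=\dy^{m-1}\wep|_{y=0}\;(m\ge1), \quad \dy^m\vep|_{y=0}=-\dx\dy^{m-2}\wep|_{y=0}\;(m\ge2).
\end{equation*}
Restricting the rearranged equation to $y=0$ also yields the boundary PDE $\dt\wep|_{y=0}=\dyy\wep|_{y=0}+\e^2\dxx\wep|_{y=0}$, which will allow me to eliminate every $\dt$ that lands on a boundary trace of $\wep$.

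The first identity in \eqref{e:reduce bdry 13} is just the boundary condition in \eqref{e:Rvort}. For the second ($k=1$), I apply $\dy$ to the rearranged equation and restrict to $y=0$; the trace identities kill every term except $\dt\dy\wep|_{y=0}=\dt\dx\pep$, $-\e^2\dxx\dy\wep|_{y=0}=-\e^2\dx^3\pep$, and $(\dy\uep)(\dx\wep)|_{y=0}=\wep\,\dx\wep|_{y=0}$.

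For the inductive step $k\to k+1$ (with $k\ge1$), I apply $\dy^{2k-1}$ to the rearranged equation to obtain
\begin{equation*}
    \dy^{2k+1}\wep=(\dt-\e^2\dxx)\dy^{2k-1}\wep + \dy^{2k-1}(\uep\dx\wep) + \dy^{2k-1}(\vep\dy\wep),
\end{equation*}
and restrict to $y=0$. By the inductive hypothesis the first piece contributes $(\dt-\e^2\dxx)^k\dx\pep$ plus $(\dt-\e^2\dxx)$ applied to the admissible products $\prod_i D^{\rho^i}\wep|_{y=0}$ of index $k-1$; inside such products, every $\dt$ landing on a factor $\dx^{\rho^i_1}\dy^{\rho^i_2}\wep|_{y=0}$ is eliminated by the boundary PDE (and its $\dx$-derivatives, which simplify at $y=0$ since $\uep,\vep$ and their $\dx$-derivatives vanish at the wall), each elimination either raising the $\dy$-order of one factor by two (keeping $l$ fixed) or raising the $\dx$-order of one factor by two together with the $\e^2$-power (raising $l$ by one). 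The Leibniz expansions of $\dy^{2k-1}(\uep\dx\wep)$ and $\dy^{2k-1}(\vep\dy\wep)$, after inserting the trace identities for $\dy^m\uep$ and $\dy^m\vep$, contribute new two-factor (i.e.\ $j=2$) products with $l=0$.

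The main obstacle is verifying that every monomial produced above lies in the claimed index set $A^j_{k,l}$. For the equality $3\sum\rho_1^i+\sum\rho_2^i=2k+4l+1$, I introduce the homogeneity grading with weights $(w(\wep),w(U),w(\pep))=(0,-1,-2)$ for the unknowns and $(w(\dy),w(\dx),w(\dt),w(\e^2))=(1,3,2,-4)$ for the operators; both the rearranged vorticity equation and the Bernoulli law \eqref{e:RBer} are weight-homogeneous, so the equality is simply weight conservation (both sides have weight $2k+1$). The three inequalities
\begin{equation*}
    \sum\rho_1^i\le k+2l-1,\quad \sum\rho_2^i\le 2k-2l-2,\quad |\rho^i|\le 2k-l-1,
\end{equation*}
together with $j\le\max\{2,k-l\}$, are not captured by any single grading and require an enlarged inductive hypothesis recording all four componentwise bounds on each monomial; the closure check under the two basic reduction operations (apply $(\dt-\e^2\dxx)$ then eliminate $\dt$ via the boundary PDE; Leibniz-expand the quadratic corrections) then reduces to a careful case analysis, using that each reduction adds at most two new derivatives to any single factor and that $j$ grows by at most one per $\dt$-elimination that invokes the $\uep\dx$ or $\vep\dy$ correction.
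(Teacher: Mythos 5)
Your proposal follows essentially the same route as the paper's proof: induction on $k$, applying $\dy^{2k-1}$ to the vorticity equation, evaluating at $y=0$ with the trace identities for $\uep,\vep$, and eliminating the resulting $\dt$'s on boundary traces of $D^{\rho}\wep$ via the differentiated vorticity equation at the wall — exactly the paper's identities \eqref{e:bdry2n+3}--\eqref{e:rhs bdry2n+3 2}. Your homogeneity grading is a tidy way to certify the equality constraint $3\sum_i\rho^i_1+\sum_i\rho^i_2=2k+4l+1$ in $A^j_{k,l}$, and the remaining inequality constraints are left to routine index counting in both your argument and the paper's, so the two proofs are at the same level of completeness.
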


Now, we can apply lemma \ref{l:reduce bdry} to control the boundary integral
$\int_{\T} \Dawe \dy \Dawe \; dx |_{y=0}$ for $|\alpha|=s$ with
$0 \le \alpha_1 \le s-1$ in the following two cases:\\
\hfill\\
\underline{Case I:} ($\alpha_2$ is even)

When $\alpha_2 := 2k$ for some $k \in \N$, we can apply boundary reduction lemma
\ref{l:reduce bdry} to $\dy \Dawe |_{y=0}$, and obtain
%
\begin{equation} \label{e:reduce I}
\begin{split}
    & \; \int_{\T} \Dawe\dy \Dawe\, dx|_{y=0}\\
    = &\; \int \Dawe (\dt - \e^2 \dxx)^k \dx^{\alpha_1 +1} \pep\, dx |_{y=0}\\
    & \;+ \sum^{k-1}_{l=0} \e^{2l} \sum^{\max\{2,k-l\}}_{j=2} \sum_{\rho \in A^j_{k,l}}
    C_{k,l,\rho^1,\rho^2, \cdots, \rho^j} \int \Dawe \dx^{\alpha_1}
    \left( \prod^j_{i=1} D^{\rho^i} \wep \right) \, dx |_{y=0}.
\end{split}
\end{equation}
According to the definition of $A^j_{k,l}$, one may check by using the indices restrictions that the largest
possible order for $\dx^{\alpha_1} D^{\rho^i} \wep$ is $\le s-1$ and at most one of
$\dx^{\alpha_1} D^{\rho^i} \wep$ can attain the order $s-1$, namely, the orders of
other terms are $\le s-2$. Therefore, we can apply the simple trace estimate \eqref{e:trace est}
and proposition \ref{p:L2Linfty uvwgk} to the identity \eqref{e:reduce I}
to obtain
%
\begin{equation}\label{e:reduce even}
\begin{split}
    & \; \left| \int_{\T} \Dawe \dy \Dawe\;dx \Big|_{y=0} \right|\\
    \le & \; \frac{1}{12} \| (1+y)^{\gamma+\alpha_2} \dy \Dawe \|^2_{L^2}
    + C_s \sum^{\frac{s}{2}}_{l=0} \|\dt^l \dx \pep\|^2_{H^{s-2l} (\T)}
    + \constC \{1+\|\wep\|_{\Hg} \}^{s-2} \|\wep\|^2_{\Hg}.
\end{split}
\end{equation}
\hfill\\
\underline{Case II:} ($\alpha_2$ is odd)

When $\alpha_2 := 2k+1$ for some $k \in \N$, since $\alpha_1 + \alpha_2 = s$ is
assumed to be even, we know that $\alpha_1 \ge 1$. Using integration by parts in
$x$, we have
%
\begin{equation}\label{e:reduce II}
    \int_{\T} \Dawe \dy \Dawe \; dx \Big|_{y=0}
    = - \int_{\T} \dx \Dawe \dx^{\alpha_1-1} \dy^{\alpha_2+1}
    \wep \; dx \Big|_{y=0}.
\end{equation}
Now, the term $\dx \Dawe|_{y=0} = \dx^{\alpha_1+1} \dy^{2k+1} \wep|_{y=0}$
has an odd number of $y$ derivatives, and hence, we can apply the boundary reduction
lemma \ref{l:reduce bdry} to reduce the order of the right hand side of
\eqref{e:reduce II}. Similar to the Case I, we can further apply the simple trace
estimates \eqref{e:trace est} and proposition \ref{p:L2Linfty uvwgk} to eventually
obtain the following estimates:
%
\begin{equation}\label{e:reduce odd}
\begin{split}
     \left|\int_{\T} \Dawe \dy \Dawe \, dx \Big|_{y=0} \right|
    & \le \; \frac{1}{12} \|(1+y)^{\gamma+\alpha_2+1} \dx^{\alpha_1-1}
    \dy^{\alpha_2+2} \wep \|^2_{L^2} + C_s \sum^{\frac{s}{2}}_{l=0}
    \| \dt^l \dx \pep\|^2_{H^{s-2l}(\T)} \\
    &  \qquad + \constC \{1+\|\wep\|_{\Hg} \}^{s-2} \|\wep\|^2_{\Hg}.
\end{split}
\end{equation}

Finally, combining estimates \eqref{e:L2 dt Dwep}, \eqref{e:bdry int Dawe},
\eqref{e:reduce even} and \eqref{e:reduce odd} and summing over $\alpha$,
 we prove \eqref{e:L2 Dwep}.

\end{proof}

In order to complete the proof of proposition \ref{p:L2 Dwep}, it remains to
show claim \ref{claim:rhs MI DRvort alpha} and the boundary reduction lemma
\ref{l:reduce bdry}. Let us first prove the claim \ref{claim:rhs MI DRvort alpha}
as follows.
%
%
\begin{proof}[Proof of claim \ref{claim:rhs MI DRvort alpha}]
\hfill\\
\underline{Proof of \eqref{e:rhs MDa1}:}

The equality \eqref{e:rhs MDa1} follows immediately from an integration by parts in
the variable $x$.\\
\hfill\\
\underline{Proof of \eqref{e:rhs MDa2}:}

Integrating by parts in $y$ (cf. remark \ref{r:bdry infty}), we have
\begin{align*}
    & \; \iint (1+y)^{2\gamma+2\alpha_2} \Dawe \dyy \Dawe\\
    = & \; - \| (1+y)^{\gamma+\alpha_2} \dy \Dawe\|^2_{L^2}
    - \int_{\T} \Dawe \dy \Dawe\,dx|_{y=0}\\
    & \; - 2 (\gamma + \alpha_2) \iint (1+y)^{2\gamma+2\alpha_2-1} \Dawe \dy \Dawe\\
    \le & \; - \frac{3}{4} \|(1+y)^{\gamma+\alpha_2} \dy \Dawe\|^2_{L^2}
    - \int_{\T} \Dawe \dy \Dawe\, dx |_{y=0} + C_{s,\gamma} \|\wep\|^2_{\Hg},
\end{align*}
which is inequality \eqref{e:rhs MDa2}.\\
\hfill\\
\underline{Proof of \eqref{e:rhs MDa3}:}

Integrating by parts (cf. remark \ref{r:bdry infty}), and using
$\dx \uep + \dy \vep = 0$, we have
%
\begin{equation}  \label{e:pf rhs MDa3}
    \iint (1+y)^{2\gamma+2\alpha_2} \Dawe\{\uep\dx\Dawe+\vep\dy\Dawe\}
    = (\gamma + \alpha_2) \iint (1+y)^{2\gamma+2\alpha_2-1} \vep |\Dawe|^2.
\end{equation}
which and inequality \eqref{e:B3 Linfty v} imply inequality \eqref{e:rhs MDa3}.\\
\hfill\\
\underline{Proof of \eqref{e:rhs MDa4}:}

Using the facts that $\dy\vep = - \dx\uep$ and $\dy\uep = \wep$, one may check
that all terms on the left hand side of \eqref{e:rhs MDa4} are one of the
following three types: denoting $e_1 := (1,0)$ and $e_2 := (0,1)$,
for $\eta \in \N$ and $\kappa, \theta\in \N^2$,\\
\hfill\\
\textbf{Type I:}
\[
    J_1 := \iint (1+y)^{2\gamma+2\alpha_2} \Dawe \dx^\eta \vep D^\kappa \wep
\]
where $1 \le \eta \le s-1$ and $\eta e_1 + \kappa = \alpha + e_2$,\\
\hfill\\
\textbf{Type II:}
\[
    J_2 := \iint (1+y)^{2\gamma+2\alpha_2} \Dawe \dx^\eta \uep D^\kappa \wep
\]
where $1 \le \eta \le s$ and $\eta e_1 + \kappa = \alpha + e_1$,\\
\hfill\\
\textbf{Type III:}
\[
    J_3 := \iint (1+y)^{2\gamma+2\alpha_2} \Dawe D^\theta \wep D^\kappa \wep
\]
where $|\theta| \le s-1$ and $\theta + \kappa = \alpha + e_1 - e_2$.

Thus, it suffices to control $J_1, J_2$ and $J_3$ by the right hand side of
\eqref{e:rhs MDa4} as follows.\\
\hfill\\
\textbf{Estimates for Type I:}\\
When $1 \le \eta \le s-2$, applying proposition \ref{p:L2Linfty uvwgk}, we have,
since $\kappa_2 = \alpha_2 +1$,
\begin{align*}
    |J_1| & \le \; \|(1+y)^{\gamma+\alpha_2} \Dawe\|_{L^2} \,
    \left\| \frac{\dx^\eta \vep}{1+y} \right\|_{L^\infty}\,
    \|(1+y)^{\gamma+\kappa_2} D^\kappa \wep\|_{L^2}\\
    & \le  \; \constC \{\|\wep\|_{\Hg} + \|\dx^s U \|_{L^2}\} \|\wep\|^2_{H^{s,\gamma}}.
\end{align*}
When $\eta = s-1$, by triangle inequality and proposition \ref{p:L2Linfty uvwgk},
we have, since $\kappa_2 = \alpha_2 +1$,
\begin{align*}
    |J_1| & \le \; \|(1+y)^{\gamma+\alpha_2} \Dawe\|_{L^2} \,
    \left\| \frac{\dx^{s-1} \vep + y \dx^s U}{1+y} \right\|_{L^2}\,
    \|(1+y)^{\gamma+\kappa_2} D^\kappa \wep\|_{L^\infty}\\
    & \qquad + \|(1+y)^{\gamma+\alpha_2} \Dawe\|_{L^2} \,
    \|\dx^s U\|_{L^\infty} \, \|(1+y)^{\gamma+\kappa_2} D^\kappa \wep\|_{L^2}\\
    & \le  \; \constC \{\|\wep\|_{\Hg} + \|\dx^s U \|_{L^\infty}\} \|\wep\|^2_{\Hg}.
\end{align*}
In conclusion, $J_1$ can be controlled by the right hand side of
\eqref{e:rhs MDa4}.\\
\hfill\\
\textbf{Estimates for Type II:}\\
When $1 \le \eta \le s-1$, applying proposition \ref{p:L2Linfty uvwgk}, we have,
since $\kappa_2 = \alpha_2$,
\begin{align*}
    |J_2| & \le \; \|(1+y)^{\gamma+\alpha_2} \Dawe\|_{L^2}  \|\dx^\eta \uep\|_{L^\infty}
    \| (1+y)^{\gamma+\kappa_2} D^\kappa \wep\|_{L^2} \\
    & \le \; \constC \{ \|\wep\|_{\Hg} + \|\dx^s U\|_{L^2} \} \|\wep\|^2_{\Hg}.
\end{align*}
When $\eta = s$, by triangle inequality and proposition \ref{p:L2Linfty uvwgk}, we have,
since $\kappa = (0, \alpha_2)$ and $0 \le \alpha_2 \le 1$,
\begin{align*}
    |J_2| & \le \; \|(1+y)^{\gamma+\alpha_2} \Dawe\|_{L^2}  \|\dx^s (u-U) \|_{L^2}
    \|(1+y)^{\gamma+\kappa_2} D^\kappa \wep \|_{L^\infty} \\
    & \qquad + \| (1+y)^{\gamma + \alpha_2} \Dawe \|_{L^2} \|\dx^s U\|_{L^\infty}
    \| (1+y)^{\gamma+\kappa_2}  D^\kappa \wep\|_{L^2} \\
    & \le \; \constC \{ \|\wep\|_{\Hg} + \|\dx^s U\|_{L^\infty} \} \|\wep\|^2_{\Hg}.
\end{align*}
In conclusion, $J_2$ can also be controlled by the right hand
side of \eqref{e:rhs MDa4}.\\
\hfill\\
\textbf{Estimates for Type III:}\\
Applying proposition \ref{p:L2Linfty uvwgk}, we have, since $\theta_2 + \kappa_2
= \alpha_2 -1$,
\begin{align*}
    |J_3| & \le \;
    \begin{cases}
        \|(1+y)^{\gamma+\alpha_2} \Dawe\|_{L^2} \|(1+y)^{1+\theta_2} D^\theta \wep\|_{L^\infty}
        \|(1+y)^{\gamma+\kappa_2} D^\kappa \wep\|_{L^2} & \text{ if } |\theta| \le s-2\\
        \|(1+y)^{\gamma+\alpha_2} \Dawe\|_{L^2} \|(1+y)^{1+\theta_2} D^\theta \wep\|_{L^2}
        \|(1+y)^{\gamma+\kappa_2} D^\kappa \wep\|_{L^\infty} & \text{ if } |\theta| = s-1
    \end{cases}\\
    & \le \; \constC \{\|\wep\|_{\Hg} + \|\dx^s U\|_{L^2}\} \|\wep\|^2_{\Hg}.
\end{align*}
Combining all estimates for type I - III, we prove inequality \eqref{e:rhs MDa4}.

\end{proof}

Lastly, we will prove the boundary reduction lemma \ref{l:reduce bdry} as follows.
%
\begin{proof}[Proof of lemma \ref{l:reduce bdry}]
First of all, let us mention that equality $\eqref{e:reduce bdry 13}_1$ is exactly the
same as the given boundary condition $\eqref{e:Rvort}_3$. Furthermore, differentiating
the vorticity $\eqref{e:Rvort}_1$ with respect to $y$, and then evaluating at $y=0$,
we obtain equality $\eqref{e:reduce bdry 13}_2$ by using $\eqref{e:reduce bdry 13}_1$ and
$\uep|_{y=0}=\vep|_{y=0}\equiv 0$. Thus, it remains to prove the formula
\eqref{e:reduce bdry}.

In order to illustrate the idea, let us derive the formula \eqref{e:reduce bdry} for
the case $k=2$ as follows.

Differentiating the vorticity equation $\eqref{e:Rvort}_1$ with respect to $y$ thrice,
and then evaluating at $y=0$, we obtain, by using $\eqref{e:reduce bdry 13}_2$
and $\uep|_{y=0}=\vep|_{y=0} \equiv 0$,
%
\begin{equation} \label{e:bdry5}
    \dy^5 \wep|_{y=0} = (\dt-\e^2\dxx)^2 \dx\pep+ (\dt-\e^2\dxx)(\wep\dx\wep)
    + 3 \wep\dx\dyy\wep + 2 \dy\wep\dx\dy\wep - 2 \dx\wep\dyy\wep|_{y=0}.
\end{equation}
Since the last three terms on the right hand side are our desired forms, we only
need to deal with the terms $(\dt-\e^2\dxx)(\wep\dx\wep)|_{y=0}$. Using the evolution
equations for $\wep$ and $\dx\wep$ as well as $\uep|_{y=0}=\vep|_{y=0} \equiv 0$, one
may check that
%
\begin{equation} \label{e:rhs bdry5}
    (\dt-\e^2\dxx)(\wep\dx\wep)|_{y=0} = \wep \dx\dyy\wep +\dx\wep\dyy\wep
    -2\e^2\dx\wep\dxx\wep|_{y=0},
\end{equation}
where all terms on the right hand side of \eqref{e:rhs bdry5} are also our desired forms. Substituting \eqref{e:rhs bdry5} into \eqref{e:bdry5},
we justify the formula \eqref{e:reduce bdry} for $k=2$.

Now, using the same algorithm, we are going to prove the formula \eqref{e:reduce bdry}
by induction on $k$.
For the notational convenience, we denote
\[
    \mathcal{A}_k := \{\sum^{k-1}_{l=0} \e^{2l} \sum^{\max\{2,k-l\}}_{j=2} \sum_{\rho \in
    A^j_{k,l}} C_{k,l,\rho^1,\rho^2,\cdots,\rho^j} \prod^j_{i=1} D^{\rho^i} \wep|_{y=0} \}.
\]
Under this notation, we will prove $\dy^{2k+1} \wep|_{y=0} - (\dt-\e^2\dxx)^k \dx\pep \in
\mathcal{A}_k$.

Assuming that the formula \eqref{e:reduce bdry} holds for $k=n$, we will show that
it also holds for $k=n+1$ as follows.

In order to reduce the order of $\dy^{2n+3} \wep|_{y=0}$, we first differentiate the
vorticity $\eqref{e:Rvort}_1$ with respect to $y\; 2n+1$ times, and then evaluate
the resulting equation at $y=0$ to obtain
%
\begin{equation} \label{e:bdry2n+3}
\begin{split}
    \dy^{2n+3}\wep|_{y=0} = & \; (\dt-\e^2\dxx) \dy^{2n+1}\wep +
    \sum^{2n+1}_{m=1} \binom{2n+1}{m} \dy^{m-1} \wep \dx\dy^{2n-m+1} \wep \\
    & \; - \sum^{2n+1}_{m=2} \binom{2n+1}{m} \dx\dy^{m-2} \wep \dy^{2n-m+2} \wep|_{y=0}.
\end{split}
\end{equation}
By a routine checking, one may show that the last two terms of \eqref{e:bdry2n+3}
belong to $\mathcal{A}_{n+1}$, so it remains to deal with $(\dt-\e^2\dxx) \dy^{2n+1}
\wep|_{y=0}$ only.

Thanks to the induction hypothesis, there exist constants
$C_{n,l,\rho^1,\rho^2,\cdots,\rho^j}$'s such that
\[
    \dy^{2n+1} \wep|_{y=0} = (\dt-\e^2 \dxx)^n \dx\pep
    + \sum^{n-1}_{l=0} \e^{2l} \sum^{\max\{2,n-l\}}_{j=2} \sum_{\rho \in A^j_{n,l}}
    C_{n,l,\rho^1,\rho^2,\cdots,\rho^j} \prod^j_{i=1} D^{\rho^i} \wep|_{y=0},
\]
so we have, up to a relabeling of the indices $\rho^i $'s,
%
\begin{equation} \label{e:rhs bdry2n+3}
\begin{split}
    & \; (\dt-\e^2\dxx)\dy^{2n+1} \wep|_{y=0}\\
    = & \; (\dt-\e^2\dxx)^{n+1} \dx\pep + \sum^{n-1}_{l=0}\e^{2l}
    \sum^{\max\{2,n-l\}}_{j=2} \sum_{\rho \in A^j_{n,l}}
    \tilde{C}_{n,l,\rho^1,\rho^2,\cdots,\rho^j} (\dt-\e^2\dxx) D^{\rho^1} \wep
    \prod^j_{i=2} D^{\rho^i} \wep\\
    & \; - \sum^{n-1}_{l=0} \e^{2l+2} \sum^{\max\{2,n-l\}}_{j=2} \sum_{\rho \in A^j_{n,l}}
    \tilde{\tilde{C}}_{n,l,\rho^1,\rho^2,\cdots,\rho^j} \dx D^{\rho^1} \wep
    \dx D^{\rho^2} \wep \prod^j_{i=3} D^{\rho^i}\wep |_{y=0}
\end{split}
\end{equation}
where $\tilde{C}_{n,l,\rho^1,\rho^2,\cdots,\rho^j}$'s and
$\tilde{\tilde{C}}_{n,l,\rho^1,\rho^2,\cdots,\rho^j}$'s are some new constants depending
on $C_{n,l,\rho^1,\rho^2,\cdots,\rho^j}$'s. It is worth to note that the last term on the
right hand side of \eqref{e:rhs bdry2n+3} belongs to $\mathcal{A}_{n+1}$, so it remains
to check whether the second term on right hand side of \eqref{e:rhs bdry2n+3} also belongs
to $\mathcal{A}_{n+1}$.

Differentiating the vorticity equation $\eqref{e:Rvort}_1$ with respect to
$x\; \rho^1_1$  times and $y\;\rho^1_2$ times, and then evaluating at $y=0$,
we have, by using $\uep|_{y=0} = \vep|_{y=0} \equiv 0$ and denoting $e_2 := (0,1)$,
%
\begin{equation} \label{e:rhs bdry2n+3 2}
\begin{split}
    & \; (\dt-\e^2\dxx)D^{\rho^1} \wep|_{y=0}\\
    = & \; - \sum_{\substack{\beta \le \rho^1 \\ \beta_2 \ge 1}} \binom{\rho^1}{\beta}
    D^{\beta-e_2} \wep \dx D^{\rho^1-\beta}\wep
    + \sum_{\substack{\beta \le \rho^1 \\ \beta_2 \ge 2}} \binom{\rho^1}{\beta}
    \dx D^{\beta-2e_2} \wep \dy D^{\rho^1-\beta}\wep + \dyy D^{\rho^1}\wep|_{y=0}.
\end{split}
\end{equation}
Using \eqref{e:rhs bdry2n+3 2}, one may justify by a routine
counting of indices that the second term on the right hand side of
\eqref{e:rhs bdry2n+3} belongs to $\mathcal{A}_{n+1}$. This completes the proof
of lemma \ref{l:reduce bdry}.

\end{proof}

%
%
\subsubsection{Weighted $L^2$ Estimate on $g_s^\e$} \label{sss:geps}
In this subsubsection we will derive the $L^2$ estimate on $(1+y)^\gamma \gep_s$ by the
standard energy method. This can be done because the quantity $\gep_s := \dx^s \wep
- \frac{\dy\wep}{\wep} \dx^s (\uep-U)$ avoids the loss of $x$-derivative by a
nonlinear cancellation, which is one of the key observations in this paper and will
be explained as follows.

Let us begin by writing down the evolution equations for $\wep$ and $\uep-U$:
%
\begin{equation} \label{e:evlt eqn}
\left\{\begin{aligned}
    (\dt + \uep \dx + \vep \dy) \wep & = \e^2 \dxx\wep + \dyy \wep\\
    (\dt + \uep \dx + \vep \dy) (\uep-U)
    & = \e^2 \dxx (\uep-U) + \dyy (\uep-U) - (\uep-U) \dx U
\end{aligned}\right.
\end{equation}
where we applied the regularized Bernoulli's law \eqref{e:RBer} in the derivation of $\eqref{e:evlt eqn}_2$.
Since our aim is to control the $H^s$ norm of $\wep$ (or $\uep-U$), let
us differentiate \eqref{e:evlt eqn} with respect to $x \; s $ times. Then we have
%
\begin{equation}\label{e:Dxs evlt}
\left\{\begin{aligned}
     (\dt + \uep \dx + \vep \dy) \dx^s \wep + \dx^s \vep \dy \wep
      & =  \; \e^2 \dx^{s+2} \wep + \dx^s  \dyy \wep + \cdots\\
     (\dt + \uep \dx + \vep \dy) \dx^s (\uep-U) +
     \dx^s \vep \,\wep & = \; \e^2 \dx^{s+2} (\uep - U) + \dx^s \dyy (\uep-U) + \cdots,
\end{aligned}\right.
\end{equation}
where we applied the fact that $\wep = \dy (\uep-U)$ and the symbol $\cdots$ represents
the lower order terms which we want the reader to ignore at this moment.

The main obstacle in \eqref{e:Dxs evlt} is the term $\dx^s \vep = - \dx^{s+1} \dy^{-1} \uep$,
which has $s+1$ $x$-derivatives so that standard energy estimates cannot be closed.
However, since there are two equations in \eqref{e:Dxs evlt}, we can eliminate the problematic
term $\dx^s \vep$ by subtracting them in an appropriate way.

Subtracting $\frac{\dy\wep}{\wep} \times \eqref{e:Dxs evlt}_2$ from $\eqref{e:Dxs evlt}_1$,
we obtain
%
\begin{equation}\label{e:gs evlt}
\begin{split}
    &\; (\dt + \uep \dx + \vep \dy - \e^2 \dxx-\dyy) \gep_s \\
    = &\; 2\e^2 \{\dx^{s+1} (\uep-U) - \frac{\dx\wep}{\wep} \dx^s (\uep-U)\} \dx\aep
    + 2 \gep_s \dy\aep - \gep_1 \dx^s U - \sum^{s-1}_{j=1} \binom{s}{j}
    \gep_{j+1} \dx^{s-j} \uep\\
    &\;- \sum^{s-1}_{j=1} \binom{s}{j} \dx^{s-j} \vep \{\dx^j \dy \wep - \aep \dx^j \wep\}
    + \aep \sum^{s-1}_{j=0} \binom{s}{j} \dx^j (\uep-U) \dx^{s-j+1} U.
\end{split}
\end{equation}
where $\gep_k := \dx^k \wep - \aep \dx^k (\uep-U)$ and $\aep := \frac{\dy \wep}{\wep}$.
Here, the main reason that we can apply this nonlinear cancelation is the Oleinik's
monotonicity assumption (i.e., $\omega > 0$), which is ensured in our solution class
$\Hs$. For the justification of \eqref{e:gs evlt}, see appendix \ref{s:appendixD}.

Now, we are going to derive the following weighted energy estimate on $\gep_s$:
%
\begin{prop}[$L^2$ Control on $(1+y)^\gamma \gep_s$] \label{p:L2 gs}
Under the hypotheses of proposition \ref{p:Uniform RPE}, we have the following estimate:
%
\begin{equation}\label{e:L2 gs}
\begin{split}
    & \; \frac{1}{2}\dfrac{d}{dt} \|(1+y)^\gamma \gep_s\|^2_{L^2}\\
    \le & \; - \frac{1}{2} \e^2 \|(1+y)^\gamma \dx \gep_s\|^2_{L^2}
    - \frac{1}{2} \|(1+y)^\gamma \dy \gep_s\|^2_{L^2}
    + C \|\dx^{s+1} \pep\|^2_{L^2(\T)} + C_{\gamma, \delta} \|\dx^s U\|^2_{L^\infty (\T)}
    \|\wep\|^2_{\Hg}\\
    & \; + \constC \{1+\|\wep\|_{\Hg} + \|\dx^s U\|_{L^\infty (\T)}\}\,
    \{\|\wep\|_{\Hg} + \|\dx^{s+1} U\|_{L^\infty (\T)} \}\, \|\wep\|_{\Hg},
\end{split}
\end{equation}
where the positive constants $C, C_{\gamma, \delta}$ and $\constC$ are independent of $\e$.

\end{prop}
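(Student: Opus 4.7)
The plan is to multiply the evolution equation \eqref{e:gs evlt} for $\gep_s$ by $(1+y)^{2\gamma} \gep_s$ and integrate over $\TR$, treating the left- and right-hand sides separately. On the left, integration by parts in $x$ converts $-\e^2 \dxx \gep_s$ into the dissipation $\e^2 \|(1+y)^\gamma \dx \gep_s\|_{L^2}^2$, while integration by parts in $y$ converts $-\dyy \gep_s$ into $\|(1+y)^\gamma \dy \gep_s\|_{L^2}^2$ plus a weight-correction term $-2\gamma \iint (1+y)^{2\gamma-1} \gep_s \dy\gep_s$ (absorbed by Cauchy--Schwarz into a fraction of the dissipation and a $C_\gamma \|(1+y)^{\gamma-1}\gep_s\|_{L^2}^2$ term bounded by $\|\wep\|_{\Hg}^2$ via Hardy), together with the boundary contribution $-\int_\T \gep_s \dy \gep_s\, dx|_{y=0}$. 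For the convective part, integration by parts together with incompressibility $\dx\uep + \dy\vep = 0$ gives $\iint (1+y)^{2\gamma}\gep_s (\uep\dx + \vep\dy)\gep_s = \gamma \iint (1+y)^{2\gamma-1} \vep |\gep_s|^2$, bounded by $\|\vep/(1+y)\|_{L^\infty}\|\wep\|_{\Hg}^2$ via the estimate in proposition \ref{p:L2Linfty uvwgk}.

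The boundary term at $y=0$ is the first delicate point. Using $\uep|_{y=0} = 0$ and the Neumann condition $\dy\wep|_{y=0} = \dx\pep$, I differentiate in $x$ to get $\dx^{s+1}\pep$ after expanding $\gep_s|_{y=0}$ and $\dy\gep_s|_{y=0}$ in terms of traces of $\wep$, $\dy\wep$, and $\aep$. A simple trace estimate of the form \eqref{e:trace est} then gives the $C\|\dx^{s+1}\pep\|_{L^2(\T)}^2$ contribution appearing in \eqref{e:L2 gs}, while the remaining lower order traces are absorbed into a small fraction of $\|(1+y)^\gamma \dy\gep_s\|_{L^2}^2$ plus $\constC\|\wep\|_{\Hg}^2$ terms by using the uniform bound $\aep \le C_\delta$ from the definition of $\Hs$.

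The second and main obstacle is the $\e^2$ term on the right-hand side of \eqref{e:gs evlt}, namely
\[
    2\e^2 \iint (1+y)^{2\gamma} \gep_s \Bigl\{\dx^{s+1}(\uep - U) - \tfrac{\dx\wep}{\wep}\dx^s(\uep - U)\Bigr\} \dx\aep,
\]
which contains the dangerous $(s{+}1)$-th $x$-derivative. I would eliminate this by integrating by parts in $x$, which transfers one derivative onto $\gep_s$ and the factor $\dx\aep$, producing $\e^2 \iint (1+y)^{2\gamma} \dx\gep_s \dx^s(\uep-U)\dx\aep$ and a similar term with $\dxx\aep$. Cauchy--Schwarz in the form $\e^2 |A||B| \le \tfrac{1}{2}\e^2 |A|^2 + \tfrac{1}{2}\e^2 |B|^2$ absorbs the $\dx\gep_s$ factor into $\tfrac{1}{2}\e^2 \|(1+y)^\gamma \dx\gep_s\|_{L^2}^2$, while the remaining factor is bounded via the $L^\infty$ controls on $\aep$, $\dx\aep$, $\dxx\aep$ furnished by $\wep \in \Hs$ and the weighted $L^2$ bound on $\dx^s(\uep-U)$ available from \eqref{e:wHg} and proposition \ref{p:L2Linfty uvwgk}. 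The resulting contribution is of order $\constC\{\cdots\}\|\wep\|_{\Hg}^2$, consistent with the right-hand side of \eqref{e:L2 gs}.

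The five remaining terms on the right-hand side of \eqref{e:gs evlt} are handled routinely by the standard trilinear $L^2$--$L^\infty$--$L^2$ splits of proposition \ref{p:L2Linfty uvwgk}, placing the top-order factor in $L^2$ (weighted) and the lower-order factors in $L^\infty$ (weighted). The term $-\gep_1 \dx^s U$ produces the $C_{\gamma,\delta}\|\dx^s U\|_{L^\infty}^2\|\wep\|_{\Hg}^2$ contribution after Cauchy--Schwarz with $\|(1+y)^\gamma \gep_s\|_{L^2}\|\wep\|_{\Hg}$; the $\dx^{s-j}\vep$ term uses $\|\dx^{s-j}\vep/(1+y)\|_{L^\infty}$ for $s-j \le s-2$ and the slightly different decomposition from the proof of \eqref{e:rhs MDa4} when $s-j = s-1$, which introduces the $\|\dx^{s+1}U\|_{L^\infty}$ factor appearing in \eqref{e:L2 gs}; and the final term with $\dx^{s-j+1}U$ is immediate by $L^\infty$--$L^2$ estimates on $U$-derivatives. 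Collecting all contributions and retaining half of each dissipation term yields \eqref{e:L2 gs}.
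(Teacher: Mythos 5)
Most of your plan coincides with the paper's proof: the same multiplication by $(1+y)^{2\gamma}\gep_s$ and integration over $\TR$, the same integrations by parts for the dissipative and convective parts, the same reduction of the boundary integral via $\dy\gep_s|_{y=0}=\dx^{s+1}\pep+\frac{\dyy\wep}{\wep}\dx^s U-\aep\gep_s|_{y=0}$ followed by the trace estimate \eqref{e:trace est}, and the same routine trilinear splits for the remaining terms. The genuine gap is in your treatment of the $\e^2$ term \eqref{e:claim gs 4}. Integrating by parts in $x$ turns $\iint(1+y)^{2\gamma}\gep_s\,\dx^{s+1}(\uep-U)\,\dx\aep$ into two terms; the one carrying $\dx\gep_s\,\dx\aep$ is absorbable as you describe, but the other, $\iint(1+y)^{2\gamma}\gep_s\,\dx^s(\uep-U)\,\dxx\aep$, is not controllable by the quantities at your disposal. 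You invoke ``$L^\infty$ controls on $\aep$, $\dx\aep$, $\dxx\aep$ furnished by $\wep\in\Hs$,'' but membership in $\Hs$ only gives the pointwise bounds $|(1+y)^{\sigma+\alpha_2}\Dalpha\wep|\le\delta^{-1}$ for $|\alpha|\le 2$, whereas $\dxx\aep$ contains $\dxx\dy\wep/\wep$, a \emph{third}-order derivative divided by $\wep$. To make $(1+y)\,\dxx\aep$ bounded you would need $(1+y)^{\sigma+1}\dxx\dy\wep\in L^\infty$, which follows neither from the definition of $\Hs$ nor from Sobolev embedding of $\|\wep\|_{H^{s,\gamma}}$ (the latter yields only the much weaker weight $(1+y)^{\gamma+1}$, and $\sigma>\gamma+\tfrac12$). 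Placing $\dxx\dy\wep$ in weighted $L^2$ instead forces $\dx^s(\uep-U)/((1+y)\wep)$ into $L^\infty$, which grows like $(1+y)^{\sigma-\gamma-1/2}$ and fails as well.

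The paper avoids this by never differentiating $\aep$ a second time: it keeps $\dx^{s+1}(\uep-U)$ intact and bounds $\|(1+y)^{\gamma-1}\dx^{s+1}(\uep-U)\|_{L^2}$ via Hardy's inequality in the $y$-variable applied to $\dx^{s+1}(\uep-U)/\wep$, exploiting the identity $\wep\dy\bigl(\dx^{s+1}(\uep-U)/\wep\bigr)=\gep_{s+1}=\dx\gep_s+\dx\aep\,\dx^s(\uep-U)$ (this is inequality \eqref{e:pf claim gs 4 2}); the resulting factor $\e^2\|(1+y)^\gamma\dx\gep_s\|_{L^2}$ is then absorbed into half of the $\e^2$-dissipation by Cauchy's inequality. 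You should replace your integration by parts in $x$ with this Hardy-in-$y$ argument, or otherwise find a way to dispose of the $\dxx\aep$ term; as written, that step of your proof does not close.
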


The proof of proposition \ref{p:L2 gs} is almost a straight forward application of
 energy methods except the estimation \eqref{e:claim gs 4} below is slightly tricky.
%
\begin{proof}[Proof of proposition \ref{p:L2 gs}]
Multiplying the evolution equation \eqref{e:gs evlt} by $(1+y)^{2\gamma} \gep_s$, and
then integrating over $\TR$, we have
%
\begin{equation}\label{e:pf L2 gs 1}
\begin{split}
    & \; \frac{1}{2}\dfrac{d}{dt} \|(1+y)^\gamma \gep_s\|^2_{L^2}\\
    = & \; \e^2 \iint (1+y)^{2\gamma} \gep_s \dxx \gep_s
    + \iint (1+y)^{2\gamma} \gep_s \dyy \gep_s
    - \iint (1+y)^{2\gamma} \gep_s \{\uep \dx \gep_s + \vep \dy \gep_s \}\\
    & \; + 2 \e^2 \iint (1+y)^{2\gamma} \gep_s  \{ \dx^{s+1} (\uep-U)
    - \frac{\dx\wep}{\wep} \dx^s (\uep - U) \}\dx \aep\\
    & \; + 2 \iint (1+y)^{2\gamma} |\gep_s|^2 \dy \aep
    - \iint (1+y)^{2\gamma} \gep_1 \gep_s \dx^s U\\
    & \; - \sum^{s-1}_{j=1} \binom{s}{j} \iint (1+y)^{2\gamma}
    \gep_{j+1} \gep_s \dx^{s-j} \uep  - \sum^{s-1}_{j=1} \binom{s}{j}
    \iint (1+y)^{2\gamma} \dx^{s-j} \vep \{ \dx^j \dy\wep - \aep \dx^j \wep \}  \gep_s \\
    & \; + \sum^{s-1}_{j=0} \binom{s}{j} \iint (1+y)^{2\gamma} \gep_s \aep
    \dx^j (\uep-U) \dx^{s-j+1} U.
\end{split}
\end{equation}

Indeed, all terms on the right hand side of \eqref{e:pf L2 gs 1} can be controlled
by using integration by parts and the standard Sobolev's type estimate on multilinear
forms. Precisely, we have
%
\begin{claim}\label{claim:rhs pf L2 gs 1}
There exist constants $C, C_\delta, C_{\gamma, \delta}$ and $\constC > 0$ such that
%
\begin{equation} \label{e:claim gs 1}
    \e^2 \iint (1+y)^{2\gamma} \gep_s \dxx \gep_s
    = - \e^2 \|(1+y)^\gamma \dx \gep_s \|^2_{L^2},
\end{equation}
%
\begin{equation} \label{e:claim gs 2}
    \iint (1+y)^{2\gamma} \gep_s \dyy \gep_s
    \le - \frac{1}{2} \|(1+y)^{\gamma} \dy \gep_s\|^2_{L^2}
    + C_{\gamma, \delta} \{1+\|\dx^s U\|^2_{L^\infty(\T)}\} \|\wep\|^2_{\Hg}
    + C \|\dx^{s+1} \pep\|^2_{L^2(\T)},
\end{equation}
%
\begin{equation} \label{e:claim gs 3}
    \left| \iint (1+y)^{2\gamma} \gep_s \{\uep \dx \gep_s + \vep \dy \gep_s\}\right|
    \le \constC \{\|\wep\|_{\Hg} + \|\dx^s U\|_{L^\infty}\} \|\wep\|^2_{\Hg},
\end{equation}
%
\begin{equation} \label{e:claim gs 4}
\begin{split}
    & \; \left| 2\e^2 \iint (1+y)^{2\gamma} \gep_s
    \{\dx^{s+1} (\uep-U) - \frac{\dx\wep}{\wep} \dx^s (\uep-U)\}\dx\aep \right| \\
    &\quad \quad  \le \;  \frac{1}{2} \e^2 \|(1+y)^{\gamma} \dx\gep_s \|^2_{L^2}
    + \e^2 \constC \{\|\wep\|_{\Hg} + \|\dx^{s+1}U\|_{L^2(\T)}\}\|\wep\|_{\Hg},
\end{split}
\end{equation}
%
\begin{equation} \label{e:claim gs 5}
    \left| 2 \iint (1+y)^{2\gamma} |\gep_s|^2 \dy \aep\right|
    \le C_\delta \|\wep\|^2_{\Hg},
\end{equation}
%
\begin{equation} \label{e:claim gs 6}
    \left| \iint (1+y)^{2\gamma} \gep_1 \gep_s \dx^s U\right|
    \le \constC \|\dx^s U\|_{L^\infty(\T)}
    \{\|\wep\|_{\Hg} + \|\dx^s U\|_{L^2 (\T)} \}\|\wep\|_{\Hg},
\end{equation}
%
\begin{equation} \label{e:claim gs 7}
    \left| \sum^{s-1}_{j=1} \binom{s}{j} \iint (1+y)^{2\gamma}
    \gep_{j+1} \gep_s \dx^{s-j} \uep\right|
    \le \constC \{\|\wep\|_{\Hg} + \|\dx^s U\|_{L^2 (\T)} \}^2 \|\wep\|_{\Hg},
\end{equation}
%
\begin{equation} \label{e:claim gs 8}
\begin{split}
    & \; \left| \sum^{s-1}_{j=1} \binom{s}{j} \iint (1+y)^{2\gamma} \dx^{s-j} \vep
    \{\dx^j \dy \wep - \aep \dx^j \wep\} \gep_s \right| \\
   &\quad \quad  \le  \; \constC \{\|\wep\|_{\Hg} + \|\dx^s U\|_{L^\infty(\T)}\} \|\wep\|^2_{\Hg},
\end{split}
\end{equation}
%
\begin{equation} \label{e:claim gs 9}
\begin{split}
    & \; \left| \sum^{s-1}_{j=0} \binom{s}{j} \iint (1+y)^{2\gamma} \gep_s \aep
    \dx^j (\uep-U) \dx^{s-j+1} U \right| \\
    &\quad \quad    \le  \; \constC \|\dx^{s+1} U\|_{L^\infty(\T)}
    \{\|\wep\|_{\Hg} + \|\dx^s U\|_{L^2 (\T)} \} \|\wep\|_{\Hg}.
\end{split}
\end{equation}

\end{claim}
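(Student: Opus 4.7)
The plan is to prove the nine inequalities of Claim \ref{claim:rhs pf L2 gs 1} by a systematic combination of integration by parts, the divergence-free constraint $\dx\uep+\dy\vep=0$, and the weighted Sobolev-type product estimates supplied by proposition \ref{p:L2Linfty uvwgk}. Throughout I will use three standing tools: (a) periodicity in $x$ makes integration by parts in $x$ cost nothing; (b) integration by parts in $y$ generates a boundary term at $y=0$ plus a lower-order correction from $\dy[(1+y)^{2\gamma}]$, while the $y=+\infty$ boundary term is discarded as in remark \ref{r:bdry infty}; and (c) the monotonicity bound $(1+y)^\sigma \wep\ge\delta$ together with $\wep\in\Hs$ gives $L^\infty$ control on $\aep=\dy\wep/\wep$ and its spatial derivatives, and in particular makes $\|\aep\|_{L^\infty}$, $\|\dy\aep\|_{L^\infty}$, $\|(1+y)\dx\aep\|_{L^\infty}$ depend only on $\delta$ and low-order norms of $\wep$.

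For the routine pieces: \eqref{e:claim gs 1} is immediate from integration by parts in $x$; \eqref{e:claim gs 5} follows from $\|\dy\aep\|_{L^\infty}\le C_\delta$ together with $\|(1+y)^\gamma \gep_s\|_{L^2}\le\|\wep\|_{\Hg}+C\|\aep\|_{L^\infty}\|\dx^s(\uep-U)\|_{L^2}$; \eqref{e:claim gs 3} follows from $\dx\uep+\dy\vep=0$, which after integration by parts leaves $(\gamma)\iint (1+y)^{2\gamma-1}\vep|\gep_s|^2$, bounded via the $L^\infty$ estimate on $\vep/(1+y)$ from proposition \ref{p:L2Linfty uvwgk}. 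The trilinear estimates \eqref{e:claim gs 6}--\eqref{e:claim gs 9} are then distributed among $L^\infty\times L^2\times L^2$ placements: for each factor one decides, based on its $x$- and $y$-derivative order, whether it should carry the $L^\infty$ role or a weighted $L^2$ role, and then applies proposition \ref{p:L2Linfty uvwgk}. The only care needed is to remember that $\gep_j$ inherits the same weighted $L^2$ control as $\dx^j\wep$ up to a term $\aep\dx^j(\uep-U)$, which is handled by triangle inequality and the almost-equivalence \eqref{e:wHg}.

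Estimate \eqref{e:claim gs 2} is the one producing the pressure term. Integration by parts in $y$ gives
\begin{equation*}
    \iint(1+y)^{2\gamma}\gep_s\dyy\gep_s
    =-\|(1+y)^\gamma\dy\gep_s\|_{L^2}^2-2\gamma\iint(1+y)^{2\gamma-1}\gep_s\dy\gep_s
    -\int_\T \gep_s\dy\gep_s\,dx\big|_{y=0}.
\end{equation*}
The weight-derivative term is absorbed into a quarter of $\|(1+y)^\gamma\dy\gep_s\|_{L^2}^2$ via Cauchy--Schwarz. For the trace at $y=0$ I use that $\uep|_{y=0}=\vep|_{y=0}=0$, so $(\uep-U)|_{y=0}=-U$ and $\dx^s(\uep-U)|_{y=0}=-\dx^s U$, while the boundary condition $\dy\wep|_{y=0}=\dx\pep$ yields $\aep|_{y=0}=\dx\pep/\wep|_{y=0}$. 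This identifies $\gep_s|_{y=0}$ and $\dy\gep_s|_{y=0}$ as linear combinations of $\dx^s\wep|_{y=0}$, $\dx^{s+1}\pep$, $\dx^s U$ and lower-order traces, which after the simple trace inequality \eqref{e:trace est} and Young's inequality produce the $C\|\dx^{s+1}\pep\|_{L^2(\T)}^2$ contribution and a $C_{\gamma,\delta}\{1+\|\dx^s U\|_{L^\infty}^2\}\|\wep\|_{\Hg}^2$ bulk term.

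The main obstacle is \eqref{e:claim gs 4}, which superficially contains $\dx^{s+1}(\uep-U)$, one $x$-derivative above the energy level. The key is that the parenthesis equals precisely $\dx\gep_s+\{\dx\aep\,\dx^s(\uep-U)-(\dx\wep/\wep)\dx^s(\uep-U)+\aep\,\dx^{s+1}(\uep-U)-\dx^{s+1}(\uep-U)\}$ plus correction terms, or more directly I integrate by parts in $x$ once, pushing $\dx$ off $\dx^{s+1}(\uep-U)$ onto the product $(1+y)^{2\gamma}\gep_s\dx\aep$; this converts the dangerous factor into $\dx^s(\uep-U)$, at the price of $\dx\gep_s$ and $\dx^2\aep$. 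Cauchy--Schwarz then balances $\e^2\dx\gep_s$ against the dissipation $\tfrac12\e^2\|(1+y)^\gamma\dx\gep_s\|_{L^2}^2$ reserved on the right-hand side, while the remaining factor $\|\dx^s(\uep-U)\|_{L^2}\le C\{\|\wep\|_{\Hg}+\|\dx^{s+1}U\|_{L^2}\}$ via \eqref{e:wHg} and the weighted $L^\infty$ control on $\dx\aep,\dx^2\aep$ absorbs into the claimed bound. It is precisely here that the regularizing viscosity $\e^2\dxx$ earns its keep: without it the $\e^2$ prefactor would be absent and the $s{+}1$-th derivative could not be tamed. Assembling \eqref{e:claim gs 1}--\eqref{e:claim gs 9} into \eqref{e:pf L2 gs 1} then yields \eqref{e:L2 gs}.
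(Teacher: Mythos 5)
Most of your proposal tracks the paper's proof closely: \eqref{e:claim gs 1}, \eqref{e:claim gs 3}, \eqref{e:claim gs 5}--\eqref{e:claim gs 9} are handled by the same integration by parts, incompressibility cancellation, and H\"older placements via proposition \ref{p:L2Linfty uvwgk}, and your boundary analysis for \eqref{e:claim gs 2} (using $\uep|_{y=0}=0$, $\dy\wep|_{y=0}=\dx\pep$ to rewrite $\dy\gep_s|_{y=0}$ as $\dx^{s+1}\pep+\tfrac{\dyy\wep}{\wep}\dx^s U-\aep\gep_s|_{y=0}$ and then invoking the trace estimate) is exactly what the paper does.

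The gap is in \eqref{e:claim gs 4}, which is the one estimate the paper itself flags as tricky. Your first suggested decomposition of $\dx^{s+1}(\uep-U)-\tfrac{\dx\wep}{\wep}\dx^s(\uep-U)$ into ``$\dx\gep_s$ plus corrections'' is not an identity ($\dx\gep_s$ contains $\dx^{s+1}\wep$, not $\dx^{s+1}(\uep-U)$), so the argument rests on your fallback: integrating by parts in $x$. That produces, besides the harmless $\dx\gep_s$ term, the term
$2\e^2\iint(1+y)^{2\gamma}\gep_s\,\dxx\aep\,\dx^s(\uep-U)$, and here the scheme breaks at the minimal regularity $s=4$. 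Both $\gep_s$ and $\dx^s(\uep-U)$ are top order, so they must occupy the two $L^2$ slots, forcing $\dxx\aep$ into weighted $L^\infty$; but $\dxx\aep$ contains $\dxx\dy\wep/\wep$, a \emph{third}-order derivative of $\wep$, and the definition of $\Hs$ only provides pointwise control for $|\alpha|\le 2$, while the Sobolev route \eqref{e:B3 Linfty w} requires $|\alpha|\le s-2$, which excludes $|\alpha|=3$ when $s=4$. (Even for $s\ge 6$, placing $\dxx\aep$ in $L^\infty$ costs a factor $\|\wep\|_{\Hg}$ and yields a cubic bound rather than the quadratic one claimed in \eqref{e:claim gs 4}; that would still feed into the final ODE, but it is not the stated inequality.) The paper avoids this entirely: it never integrates by parts, but instead bounds $\|(1+y)^{\gamma-1}\dx^{s+1}(\uep-U)\|_{L^2}$ by applying the Hardy inequality in $y$ to $\dx^{s+1}(\uep-U)/\wep$ together with the identity $\wep\dy\bigl(\dx^{s+1}(\uep-U)/\wep\bigr)=\gep_{s+1}=\dx\gep_s+\dx\aep\,\dx^s(\uep-U)$, which converts the extra $x$-derivative into $\dx\gep_s$ (absorbed by the $\e^2$ dissipation) plus terms controlled by $\|(1+y)\dx\aep\|_{L^\infty}\le\delta^{-2}+\delta^{-4}$, i.e.\ by two-derivative pointwise bounds only. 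This identity is the missing ingredient in your argument; without it, or some substitute that avoids three pointwise derivatives of $\wep$, \eqref{e:claim gs 4} is not established for $s=4$.
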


Assuming claim \ref{claim:rhs pf L2 gs 1}, which will be proven at the end of this
subsection, for the moment, we can apply \eqref{e:claim gs 1} - \eqref{e:claim gs 9}
to \eqref{e:pf L2 gs 1} to obtain our desired inequality \eqref{e:L2 gs} because $\e \in [0,1]$ and
$\|\dx^s U\|_{L^2(\T)} \le \|\dx^s U\|_{L^\infty (\T)} \le \|\dx^{s+1}U\|_{L^2 (\T)}
\le \|\dx^{s+1} U\|_{L^\infty (\T)}$.

\end{proof}

To complete the proof of proposition \ref{p:L2 gs}, we will show claim
\ref{claim:rhs pf L2 gs 1} as follows.
%
%
\begin{proof}[Proof of Claim \ref{claim:rhs pf L2 gs 1}]
\hfill\\
\underline{Proof of \eqref{e:claim gs 1}:}

The equality \eqref{e:claim gs 1} follows directly from an integration by parts in
the variable $x$.\\
\hfill\\
\underline{Proof of \eqref{e:claim gs 2}:}

Integrating by parts in the variable $y$ (cf. remark \ref{r:bdry infty}),
we have
%
\begin{align}
    &\; \iint (1+y)^{2\gamma} \gep_s \dyy \gep_s \notag\\
    = & \; - \|(1+y)^\gamma \dy \gep_s\|^2_{L^2} - \int_{\T} \gep_s \dy \gep_s \,
    dx |_{y=0} - 2\gamma \iint (1+y)^{2\gamma-1} \gep_s \dy \gep_s\notag\\
    \le & \; - \frac{3}{4} \|(1+y)^\gamma \dy \gep_s\|^2_{L^2}
    - \int_{\T} \gep_s \dy \gep_s \, dx |_{y=0} + C_{\gamma}\|\wep\|^2_{\Hg}.
    \label{e:pf claim gs 2}
\end{align}

In order to deal with the boundary integral $\int_{\T} \gep_s \dy \gep_s |_{y=0}$, one may apply the
boundary conditions $\eqref{e:RPE}_4$ and $\eqref{e:Rvort}_3$ to justify that
\[
    \dy \gep_s |_{y=0} = \dx^{s+1} \pep + \frac{\dyy \wep}{\wep} \dx^s U - \aep\gep_s
    |_{y=0}.
\]
This boundary condition allows us to reduce the order of $\dy \gep_s$, and hence,
using the simple trace estimate \eqref{e:trace est} and the facts that $\wep|_{y=0}
\ge \delta$ and $\|\aep\|_{L^\infty} \le \delta^{-2}$, one may prove that
%
\begin{equation} \label{e:pf claim gs 2 2}
\begin{split}
    & \; \left| \int_{\T} \gep_s \dy \gep_s \, dx |_{y=0} \right|\\
    \le & \; \frac{1}{4} \|(1+y)^\gamma \dy \gep_s\|^2_{L^2}
    + C_\delta \{1+\|\dx^s U\|^2_{L^\infty} \} \|\wep\|^2_{\Hg}
    + C \|\dx^{s+1} \pep\|^2_{L^2 (\T)}.
\end{split}
\end{equation}
Substituting \eqref{e:pf claim gs 2 2} into \eqref{e:pf claim gs 2}, we obtain
\eqref{e:claim gs 2}.\\
\hfill\\
\underline{Proof of \eqref{e:claim gs 3}:}

Integrating by parts (cf. remark \ref{r:bdry infty}), and using $\dx \uep + \dy \vep = 0$, we have
%
\begin{equation} \label{e:pf claim gs 3}
    \iint (1+y)^{2\gamma} \gep_s \{\uep \dx \gep_s + \vep \dy \gep_s\}
    = \gamma \iint (1+y)^{2\gamma -1} \vep |\gep_s|^2.
\end{equation}
which and inequality \eqref{e:B3 Linfty v} imply inequality \eqref{e:claim gs 3}.\\
\hfill\\
\underline{Proof of \eqref{e:claim gs 4}:}

Since $\wep \in C([0,T]; H^{s+4,\gamma}_{\sigma, \delta})$, it follows from the definition of $H^{s+4,\gamma}_{\sigma, \delta}$ that
$(1+y)^\sigma \wep \ge \delta$ and $|(1+y)^{\sigma + \alpha_2} \Dalpha \wep| \le \delta^{-1}$ for all $| \alpha | \le 2$. Thus, we have
$\|(1+y)\dx \aep\|_{L^\infty} \le \delta^{-2} + \delta^{-4}$ and
$\left\| \frac{\dx\wep}{\wep}\right\|_{L^\infty} \le \delta^{-2}$, and hence,
%
\begin{equation}\label{e:pf claim gs 4}
\begin{split}
    & \; \left| 2\e^2 \iint (1+y)^{2\gamma} \gep_s \{ \dx^{s+1} (\uep-U)
    - \frac{\dx\wep}{\wep} \dx^s (\uep -U) \}\dx \aep \right| \\
    \le & \; 2\e^2 C_\delta \|(1+y)^\gamma \gep_s \|_{L^2}
    \{\|(1+y)^{\gamma-1} \dx^{s+1} (\uep-U)\|_{L^2}
    + \| (1+y)^{\gamma-1} \dx^s (\uep - U) \|_{L^2} \}.
\end{split}
\end{equation}

Now, we require the following inequality:
%
\begin{equation} \label{e:pf claim gs 4 2}
    \|(1+y)^{\gamma-1} \dx^{s+1} (\uep-U)\|_{L^2}
    \le C_{\gamma,\sigma,\delta} \{\|\dx^{s+1} U\|_{L^2 (\T)} + \|(1+y)^\gamma \dx \gep_s
    \|_{L^2} + \|(1+y)^{\gamma-1} \dx^s (\uep-U)\|_{L^2} \}.
\end{equation}
Assuming \eqref{e:pf claim gs 4 2} for the moment, we can apply it and proposition
\ref{p:L2Linfty uvwgk} to \eqref{e:pf claim gs 4}, and obtain
\[\begin{split}
    & \; \left| 2 \e^2 \iint (1+y)^{2\gamma} \gep_s
    \{\dx^{s+1} (\uep -U) - \frac{\dx \wep}{\wep} \dx^s (\uep-U)\}\dx \aep \right| \\
    \le & \; \e^2 \constC \{ \|\wep\|_{\Hg} + \|\dx^s U\|_{L^2 (\T)}
    + \|\dx^{s+1} U\|_{L^2 (\T)} + \|(1+y)^\gamma \dx \gep_s\|_{L^2} \} \|\wep\|_{\Hg}
\end{split}\]
which implies \eqref{e:claim gs 4} by Cauchy's inequality and the inequality
$ \|\dx^s U\|_{L^2 (\T)} \le \frac{1}{2\pi} \|\dx^{s+1} U\|_{L^2 (\T)}$.

To complete the justification of \eqref{e:claim gs 4}, we have to verify
\eqref{e:pf claim gs 4 2} as follows.

Since $\delta \le (1+y)^\sigma \wep \le \delta^{-1}$ and $\uep |_{y=0} = 0$, applying part (ii) of lemma \ref{l:Hardy},
we have
%
\begin{align}
    \|(1+y)^{\gamma-1} \dx^{s+1} (\uep-U)\|_{L^2}
    & \le \; \delta^{-1} \left\|(1+y)^{\gamma-\sigma-1} \frac{\dx^{s+1} (\uep-U)}{\wep}
    \right\|_{L^2} \notag\\
    & \le \; C_{\gamma, \sigma, \delta} \left\{\|\dx^{s+1} U\|_{L^2 (\T)}
    + \left\| (1+y)^\gamma \wep \dy \left(\frac{\dx^{s+1} (\uep-U)}{\wep}\right)
    \right\|_{L^2} \right\}. \label{e:pf claim gs 4 3}
\end{align}
It is worth to note that
\[
    \wep \dy \left(\frac{\dx^{s+1} (\uep-U)}{\wep}\right) = \gep_{s+1}
    = \dx \gep_s + \dx\aep \dx^s (\uep - U),
\]
so by \eqref{e:pf claim gs 4 3}, we have
\[\begin{split}
    & \; \|(1+y)^{\gamma-1} \dx^{s+1} (\uep-U) \|_{L^2}\\
    \le & \; C_{\gamma, \sigma, \delta} \{\|\dx^{s+1} U\|_{L^2 (\T)}
    + \|(1+y)^\gamma \dx \gep_s\|_{L^2} + \|(1+y)\dx \aep\|_{L^\infty}
    \| (1+y)^{\gamma-1} \dx^s (\uep - U)\|_{L^2} \}
\end{split}\]
which implies inequality \eqref{e:pf claim gs 4 2} because
$\|(1+y) \dx\aep \|_{L^\infty} \le \delta^{-2} + \delta^{-4}$.\\
\hfill\\
\underline{Proof of \eqref{e:claim gs 5}:}

The inequality \eqref{e:claim gs 5} follows from
$\|\dy \aep\|_{L^\infty} \le \delta^{-2} + \delta^{-4}$ and the definition of
$\|\wep\|_{\Hg}$.\\
\hfill\\
\underline{Proof of \eqref{e:claim gs 6} and \eqref{e:claim gs 7}:}

Both inequalities \eqref{e:claim gs 6} and \eqref{e:claim gs 7} follows from the
H\"{o}lder inequality and proposition \ref{p:L2Linfty uvwgk}.\\
\hfill\\
\underline{Proof of \eqref{e:claim gs 8}:}

For $j=2,3,\cdots, s-1$, using proposition \ref{p:L2Linfty uvwgk} and
$\|(1+y)\aep\|_{L^\infty} \le \delta^{-2}$, we have
%
\begin{align}
    & \; \left| \iint (1+y)^{2\gamma} \dx^{s-j} \vep \{\dx^j\dy \wep
    - \aep \dx^j \wep\} \gep_s \right| \notag\\
    \le & \; \left\| \frac{\dx^{s-j} \vep}{1+y} \right\|_{L^\infty}
    \{ \|(1+y)^{\gamma+1} \dx^j \dy \wep\|_{L^2}
    + \| (1+y) \aep \|_{L^\infty} \| (1+y)^\gamma \dx^j \wep\|_{L^2} \}
    \|(1+y)^\gamma \gep_s\|_{L^2} \notag\\
    \le & \; \constC \{\|\wep\|_{\Hg} + \|\dx^s U\|_{L^2 (\T)} \}
    \|\wep\|^2_{\Hg}. \label{e:pf claim gs 8}
\end{align}
When $j=1$, using proposition \ref{p:L2Linfty uvwgk} and
$\|(1+y)\aep\|_{L^\infty} \le \delta^{-2}$ again, we have
%
\begin{align}
    & \; \left| \iint (1+y)^{2\gamma} \dx^{s-1} \vep \{\dx\dy \wep
    - \aep \dx \wep\} \gep_s \right| \notag\\
    \le & \; \Big\{ \left\| \frac{\dx^{s-1} \vep + y \dx^s U}{1+y} \right\|_{L^2}
    \left( \|(1+y)^{\gamma+1} \dx\dy \wep \|_{L^\infty}
     + \|(1+y) \aep \|_{L^\infty} \| (1+y)^\gamma \dx \wep\|_{L^\infty} \right)
     \notag\\
     & \;  + \|\dx^s U\|_{L^\infty (\T)} \left( \|(1+y)^{\gamma+1} \dx\dy\wep\|_{L^2}
    + \|(1+y)\aep\|_{L^\infty} \|(1+y)^\gamma \dx \wep\|_{L^2} \right) \Big\}
    \|(1+y)^\gamma \gep_s\|_{L^2} \notag\\
    \le & \; \constC \{\|\wep\|_{\Hg} + \|\dx^s U\|_{L^\infty} \}
    \|\wep\|^2_{\Hg}. \label{e:pf claim gs 8 2}
\end{align}
Combining estimates \eqref{e:pf claim gs 8} and \eqref{e:pf claim gs 8 2}, we
prove inequality \eqref{e:claim gs 8}.\\
\hfill\\
\underline{Proof of \eqref{e:claim gs 9}:}

For any $j=0,1,\cdots,s-1$, by proposition \ref{p:L2Linfty uvwgk} and
$\|(1+y)\aep\|_{L^\infty} \le \delta^{-2}$, we have
\begin{align*}
    & \; \left| \iint (1+y)^{2\gamma} \gep_s \aep \dx^j (\uep-U) \dx^{s-j+1}U\right|\\
    \le & \; \|(1+y)^\gamma \gep_s\|_{L^2} \|(1+y)\aep\|_{L^\infty} \|(1+y)^{\gamma-1}
    \dx^j (\uep-U)\|_{L^2} \|\dx^{s-j+1} U\|_{L^\infty (\T)}\\
    \le & \; \constC \|\dx^{s+1} U\|_{L^\infty(\T)} \{\|\wep\|_{\Hg} + \|\dx^sU\|_{L^2(\T)}\}
    \|\wep\|_{\Hg},
\end{align*}
which implies inequality \eqref{e:claim gs 9}.

\end{proof}

%
%
\subsubsection{Weighted $H^s$ Estimate on $\wep$}\label{sss:Hs wep}
The aim of this subsubsection is to combine the estimates in proposition \ref{p:L2 Dwep}
and proposition \ref{p:L2 gs} to derive the growth rate control \eqref{e:Hsgamma} on
the weighted $H^s$ energy of $\wep$.

According to propositions \ref{p:L2 Dwep} and \ref{p:L2 gs}, we know from the
definition of $\|\cdot\|_{\Hg}$ that
\begin{align*}
    & \dfrac{d}{dt} \|\wep\|^2_{\Hg}\\
    \le & \;\constC \{1+\|\wep\|_{\Hg} + \|\dx^s U\|_{L^\infty(\T)} \}
    \{\|\wep\|_{\Hg} + \|\dx^{s+1} U\|_{L^\infty (\T)} \} \|\wep\|_{\Hg} \\
    & \; + C_{\gamma, \delta} \|\dx^s U\|^2_{L^\infty (\T)} \|\wep\|^2_{\Hg}
    + \constC \{1+ \|\wep\|_{\Hg} \}^{s-2} \|\wep\|^2_{\Hg}
    + C_{s} \sum^{\frac{s}{2}}_{l=0} \|\dt^l \dx \pep\|^2_{H^{s-2l}(\T)}\\
    \le & \; \constC \|\wep\|^s_{\Hg} + \constC \{1+ \|\dx^{s+1} U\|^4_{L^\infty (\T)} \}
    + C_{s} \sum^{\frac{s}{2}}_{l=0} \|\dt^l \dx \pep\|^2_{H^{s-2l}(\T)},
\end{align*}
and hence, it follows from the comparison principle of ordinary differential equations that
\[\begin{split}
    & \; \|\wep(t)\|^2_{\Hg} \\
    \le & \; \left\{\|\w_0\|^2_{\Hg} + \int^t_0 F(\tau)\,d\tau\right\}
    \left\{1 - (\frac{s}{2}-1) \constC \left(\|\w_0\|^2_{\Hg} + \int^t_0 F(\tau)\,d\tau
    \right)^{\frac{s-2}{2}} t \right\}^{-\frac{2}{s-2}}
\end{split}\]
as long as the second braces on the right hand side of the above inequality is positive,
where $F : [0,T] \longrightarrow \R^+$ is defined by \eqref{e:F eqn}.
This proves inequality \eqref{e:Hsgamma}.

%
%
%
\subsection{Weighted $L^\infty$ Estimates on Lower Order Terms} \label{ss:Linfty lower}
In this subsection we will derive uniform (in $\e$) weighted $L^\infty$ estimates
on $\Dawe$ for $|\alpha| \le 2$ by using the classical maximum principle. The key
idea is to ``view" the evolution equation of $\Dawe$ as a ``linear" parabolic
equation with coefficients involving higher order terms of $\uep, \vep$ and $\wep$,
which can be controlled by proposition \ref{p:L2Linfty uvwgk} provided that
$\|\wep\|_{\Hg} < + \infty$.

More precisely, we will prove part (ii) of proposition \ref{p:Uniform RPE}
as follows.
%
%
\begin{proof}[Proof of part (ii) of proposition \ref{p:Uniform RPE}]
This proof, based on a simple application of the classical maximum principle for
parabolic equations, will be divided into two steps. In the first step, we will
derive weighted $L^\infty$ controls on $\Dawe$ by using the maximum principle stated in
appendix \ref{s:appendixE}. These controls will rely on the boundary values of
$\Dawe$ at $y=0$, so we will also derive estimates on the boundary values of $\Dawe$
by using Sobolev embedding or growth rate control argument in the
second step.\\
\hfill\\
\underline{Step 1:} (Maximum Principle Argument)

First of all, let us derive a $L^\infty$ estimate on the $I := \sum_{|\alpha| \le 2}
|(1+y)^{\sigma+\alpha_2} \Dawe|^2$ as follows.

For notational convenience, let us denote, for $|\alpha| \le 2$,
\[
    B_\alpha := (1+y)^{\sigma+\alpha_2} \Dawe
\]
which is our concerned quantities. By a direct computation, $B_\alpha$ satisfies
%
\begin{equation} \label{e:Balpha}
    \{\dt+\uep\dx+\vep\dy-\e^2 \dxx-\dyy\} B_\alpha = Q_\alpha \dy B_\alpha
    + R_\alpha B_\alpha + S_\alpha,
\end{equation}
where the quantities $Q_\alpha$, $R_\alpha$ and $S_\alpha$ are given explicitly by
\[\begin{aligned}
    Q_\alpha & := - \frac{2(\sigma + \alpha_2)}{1+y},  \qquad
    R_\alpha  := \frac{\sigma+\alpha_2}{1+y} \vep + \frac{(\sigma+\alpha_2)
    (\sigma+\alpha_2 +1)}{(1+y)^2},\\
    S_\alpha & :=
    \begin{cases}
        0 & \text{ if } \alpha = (0,0),\\
        - \sum_{0 < \beta \le \alpha} \binom{\alpha}{\beta}
        \{(1+y)^{\beta_2} D^\beta \uep B_{\alpha-\beta+e_1}
        + (1+y)^{\beta_2 -1} D^\beta \vep B_{\alpha-\beta +e_2} \}
        & \text{ if } |\alpha| \ge 1.
    \end{cases}
\end{aligned}\]
Here, $e_1 := (1,0)$ and $e_2 := (0,1)$. Using proposition \ref{p:L2Linfty uvwgk},
we have the following pointwise controls on $Q_\alpha,
R_\alpha$ and $S_\alpha$: for $|\alpha| \le 2$,
%
\begin{equation} \label{e:QRS}
\left\{\begin{aligned}
    |Q_\alpha| & \le C_\sigma, \qquad
    |R_\alpha|  \le \constC \{1 + \|\w\|_{\Hg} + \|\dx^s U\|_{L^2 (\T)} \},\\
    |S_\alpha| & \le \constC \{ \|\w\|_{\Hg} + \|\dx^s U\|_{L^2}\}
    \sum_{0 < \beta \le \alpha} \{ |B_{\alpha-\beta+e_1}| + |B_{\alpha-\beta+e_2}| \},
\end{aligned}\right.
\end{equation}
where $C_\sigma$ and $\constC$ are some universal constants which are
independent of the solution $\wep$.

Let us recall from the definition that $I := \sum_{|\alpha| \le 2} |B_\alpha|^2$, so using
\eqref{e:Balpha} and \eqref{e:QRS}, we have
\begin{align*}
    & \; \{ \dt  + \uep\dx+ \vep\dy-\e^2 \dxx - \dyy\} I \\
    = & \; - 2 \sum_{|\alpha|\le 2} \{\e^2  |\dx B_\alpha|^2 + |\dy B_\alpha|^2\}
    + 2 \sum_{|\alpha|\le2} \{Q_\alpha B_\alpha \dy B_\alpha + R_\alpha |B_\alpha|^2
    +S_\alpha B_\alpha \} \\
    \le & \; \constC \{1 + \|\wep\|_{H^{s,\gamma}_g} + \|\dx^s U\|_{L^2}\} I.
\end{align*}

Applying the classical maximum principle for parabolic equations (see lemma \ref{l:max parabolic}
for instance) to the quantity $I$, we have, after using the definition \eqref{e:G eqn} of $G$,
%
\begin{equation}\label{e:Imax}
\begin{split}
    & \; \|I(t)\|_{L^\infty (\TR)} \\
    \le & \; \max \{ e^{\constC \{1+ G(t)\}t } \|I(0)\|_{L^\infty (\TR)},
    \max_{\tau \in [o,t]} \{ e^{\constC \{1+ G(t)\} (t-\tau) } \|I(\tau) |_{y=0}
    \|_{L^\infty (\T)} \} \}.
\end{split}
\end{equation}

Next, we are going to derive a lower bound estimate on $B_{(0,0)} := (1+y)^\sigma \wep$.
To do this, let us recall that $B_{(0,0)}$ satisfies
\[
    \{\dt+\uep\dx+(\vep-Q_{(0,0)}) \dy - \e^2 \dxx - \dyy\} B_{(0,0)} = R_{(0,0)} B_{(0,0)}.
\]
Using the classical maximum principle (see lemma \ref{l:min parabolic} for instance) and
\eqref{e:QRS}, we obtain
%
\begin{equation} \label{e:min sigma wep}
\begin{split}
    &\; \min_{\TR} (1+y)^\sigma \wep(t)\\
    \ge & \; \left(1 - \constC \{1 + G(t)\}t e^{\constC \{1+ G(t)\} t } \right)
    \min \{\min_{\TR} (1+y)^\sigma \w_0, \min_{[0,t]\times \T} \wep|_{y=0} \}.
\end{split}
\end{equation}
\hfill\\
\underline{Step 2:} (Controls on Boundary Values)

According to inequalities \eqref{e:Imax} and \eqref{e:min sigma wep},
we have already controlled the underlying quantities $I$ and
$B_{(0,0)} := (1+y)^\sigma \wep$ by their initial and boundary values. However, their
boundary values are not given in the problem, so we will estimate them in this step.

In order to control $I|_{y=0} := \sum_{|\alpha| \le 2} |\Dawe |_{y=0}|^2$, we will apply
Sobolev embedding argument and growth rate control argument in the cases $s\geq 4$ and $s\geq 6$ respectively.
Combining the boundary estimates on $I$ with \eqref{e:Imax}, we will finally obtain inequalities
\eqref{e:I s4} and \eqref{e:weighted Linfty s6}.

To derive inequality \eqref{e:I s4}, we first apply lemma \ref{l:Sobolev} to obtain
\begin{align*}
    \|I|_{y=0} \|_{L^\infty (\T)}
    & \le \; \sum_{|\alpha| \le 2} \|I\|_{L^\infty (\TR)}
    \le 3 C^2 \sum_{|\alpha| \le 2} \{\|\Dawe\|^2_{L^2} + \|\dx \Dawe\|^2_{L^2}
    + \|\dyy \Dawe\|^2_{L^2} \}\\
    & \le \; 6 C^2 \|\wep\|^2_{\Hg}
\end{align*}
which and inequality \eqref{e:Imax} imply inequality \eqref{e:I s4}.

To derive inequality \eqref{e:weighted Linfty s6}, let us begin by
writing down the evolution equation for $\Dawe$ at $y=0$: for any $|\alpha| \le 2$,
%
\begin{equation}\label{e:evlt Dawe y=0}
    \dt\Dawe \Big|_{y=0} = (\e^2 \dxx+\dyy) \Dawe + E_\alpha  \Big|_{y=0}
\end{equation}
where the term $E_\alpha$ is given explicitly by
\[
    E_\alpha :=
    \begin{cases}
        0 & \text{ if } \alpha_2 = 0\\
        -\wep \dx \dy^{\alpha_2-1} \wep & \text{ if } \alpha_1 = 0 \text{ and } \alpha_2 \ge 1\\
        - |\dx\wep|^2 - \wep \dxx\wep & \text{ if } \alpha = (1,1).
    \end{cases}
\]
Here, the derivation of \eqref{e:evlt Dawe y=0} is just a direct differentiation on the
vorticity equation $\eqref{e:Rvort}_1$ as well as using the boundary condition $\uep |_{y=0}
= \vep|_{y=0} \equiv 0$. Furthermore, by proposition \ref{p:L2Linfty uvwgk}, if $s\ge 4$,
then
%
\begin{equation} \label{e:Galpha y=0}
    \|E_\alpha|_{y=0} \|_{L^\infty (\T)} \le C_{s, \gamma} \|\wep\|^2_{\Hg}.
\end{equation}
In addition, if $s \ge |\alpha| + 4$, then by proposition \ref{p:L2Linfty uvwgk} again,
for $\e \in [0,1]$,
%
\begin{equation} \label{e:Linfty Dawe y=0}
    \|(\e^2 \dxx+\dyy) \Dawe |_{y=0} \|_{L^\infty (\T)} \le C_{s, \gamma} \|\wep\|_{\Hg}.
\end{equation}
Therefore, using \eqref{e:evlt Dawe y=0} - \eqref{e:Linfty Dawe y=0} and inequality
\eqref{e:B3 Linfty w}, we have, for $s \ge 6$,
\[
    \|\dt I|_{y=0}\|_{L^\infty (\T)}
    \le C_{s, \gamma} \{1+\|\wep\|_{\Hg}\} \|\wep\|^2_{\Hg},
\]
and hence, by direction integration and definition \eqref{e:G eqn} of $\Omega$, we obtain
%
\begin{equation} \label{e:I y=0}
    \| I(t) |_{y=0} \|_{L^\infty (\T)}
    \le \| I(0)|_{y=0} \|_{L^\infty (\T)} + C_{s, \gamma} \{1 + \Omega(t)\} \Omega(t)^2 t.
\end{equation}
Combining estimates \eqref{e:Imax} and \eqref{e:I y=0}, we prove our desired estimate
\eqref{e:weighted Linfty s6}.

Lastly, it remains to show inequality \eqref{e:weighted Linfty s4}. Let us begin by deriving an estimate on $\wep|_{y=0}$.

Using identity \eqref{e:evlt Dawe y=0}, inequality \eqref{e:Linfty Dawe y=0} and
definition \eqref{e:G eqn} of $\Omega$, we have,
for any $s \ge 4$,
\[
    \|\dt\wep |_{y=0} \|_{L^\infty (\T)} \le C_{s, \gamma} \Omega(t),
\]
so a direct integration yields
%
\begin{equation} \label{e:min wep y=0}
    \min_{\T} \wep (t) |_{y=0}
    \ge \min_{\T} \w_0 |_{y=0} - C_{s, \gamma} \Omega(t)t.
\end{equation}
Combining estimates \eqref{e:min sigma wep} and \eqref{e:min wep y=0}, we prove
\eqref{e:weighted Linfty s4}.

\end{proof}
%
%
%

%
%
%
\section{Proof of the Main Theorem} \label{s:pf main thm}
The purpose of this section is to complete the proof of our main theorem
\ref{t:Hs exist PS}. In other words, we will prove existence and uniqueness to the
Prandtl equations \eqref{e:PE} in the subsection \ref{ss:exist PE} and
\ref{ss:unique PE} respectively.

%
%
%
\subsection{Existence for the Prandtl Equations} \label{ss:exist PE}
In this subsection we will construct the solution to the Prandtl equations \eqref{e:PE}
by passing to the limit $\e$ goes to $0^+$ in the regularized Prandtl equations \eqref{e:RPE}.
Our proof will be based on the uniform (in $\e$) weighted estimates derived in proposition
\ref{p:Uniform RPE}. Using these estimates, we will derive uniform bounds and lifespan
on $\wep$, and then prove convergence of $\wep$ and consistency of the limit $\w$
as follows.\\
\\
\underline{\textbf{Uniform Bounds and Life-span on $\wep$}}

According to proposition \ref{p:loc sol RPE}, a solution $\wep$ to the
regularized Prandtl equations \eqref{e:RPE} exists up to a time interval
$[0,T_{s,\gamma,\sigma, \delta, \e, \w_0, U}]$, which may depend on
$\e$ as well. However, in proposition
\ref{p:Uniform RPE} we have already derived uniform (in $\e$) estimates on
$\wep$, so one may apply the standard continuous induction argument to further
solve the Prandtl equations \eqref{e:PE} up to a time
interval which is independent of $\e$. As a result, we have
%
\begin{prop}[Uniform Life-span and Estimates for $\wep$] \label{p:lifespan}
In addition to the hypotheses of proposition \ref{p:loc sol RPE}, when $s=4$, we further assume that $\delta > 0$ is chosen small
enough such that the initial hypothesis \eqref{e:mainthm con} holds. Then there exists a uniform
life-span $T := T (s,\gamma, \sigma, \delta, \|\w_0\|_{H^{s,\gamma}}, U) > 0$, which is independent of $\e$, such that the
regularized vorticity system \eqref{e:Rvort} - \eqref{e:Ruv} has a solution
$\wep \in C([0,T]; \Hs) \cap C^1 ([0,T]; H^{s-2,\gamma})$ with the following uniform
(in $\e$) estimates:
\begin{itemize}
\item[(i)] (Uniform  Weighted $H^s$ Estimate) For any $\e \in [0,1]$ and any
$t \in [0,T]$,
%
\begin{equation} \label{e:uniform weighted Hs wep}
    \|\wep (t)\|_{H^{s,\gamma}_g} \le 4 \|\w_0\|_{H^{s,\gamma}_g}.
\end{equation}
\item[(ii)] (Uniform Weighted $L^\infty$ Bound) For any $\e \in [0,1]$ and $t \in [0,T]$,
%
\begin{equation} \label{e:weighted Linfty bound}
    \left\| \sum_{|\alpha| \le 2} |(1+y)^{\sigma+\alpha_2} \Dawe (t) | \right\|^2_{L^\infty (\TR)}
    \le \frac{1}{\delta^2}.
\end{equation}
\item[(iii)] (Uniform Weighted $L^\infty$ Lower Bound) For any $\e \in [0,1]$ and $t \in
[0,T]$,
%
\begin{equation} \label{e:weighted Linfty lower}
    \min_{\TR} (1+y)^\sigma \wep (t)  \ge \delta.
\end{equation}

\end{itemize}
\end{prop}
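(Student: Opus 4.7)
The plan is to combine the $\e$-dependent local existence of Proposition \ref{p:loc sol RPE} with the uniform-in-$\e$ a priori estimates of Proposition \ref{p:Uniform RPE} through a standard continuity (bootstrap) argument. For fixed $\e \in (0,1]$, let $T_\e > 0$ denote the lifespan given by Proposition \ref{p:loc sol RPE} and introduce
\[
    T^*_\e := \sup \bigl\{ T' \in (0, T_\e] : \text{the bounds (i), (ii), (iii) hold on } [0,T'] \bigr\}.
\]
The initial hypotheses $\w_0 \in H^{s,\gamma}_{\sigma, 2\delta}$ and $\delta < 1/2$ force strict inequalities at $t=0$, namely $\|I(0)\|_{L^\infty} \le 1/(4\delta^2) < 1/\delta^2$ and $\min (1+y)^\sigma \w_0 \ge 2\delta > \delta$, while (i) is trivial at $t=0$. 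Continuity of $\wep$ in time (from Proposition \ref{p:loc sol RPE}) then gives $T^*_\e > 0$.

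On the interval $[0, T^*_\e]$ the bounds (i)--(iii) ensure $\wep \in C([0,T^*_\e]; \Hs)$, so the hypotheses of Proposition \ref{p:Uniform RPE} are met and its conclusions apply with constants independent of $\e$. For (i): in the bound \eqref{e:Hsgamma}, the function $F$ is uniformly bounded in $\e$ by quantities depending only on $s,\gamma,\sigma,\delta$ and $U$ (and on $\|\dx \pep\|$, which via \eqref{e:RBer} is controlled by $U$ uniformly in $\e$), so one can choose $T_1 > 0$ small enough that the RHS of \eqref{e:Hsgamma} is bounded by $2\|\w_0\|_{\Hg}$, improving the bootstrap bound $4\|\w_0\|_{\Hg}$ strictly. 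For (iii): using (i), $\Omega(t) \le 2\|\w_0\|_{\Hg}$ is already under control, so the factor $1 - \constC\{1+G(t)\}t\,e^{\constC\{1+G(t)\}t}$ in \eqref{e:weighted Linfty s4} is $\ge 3/4$ for small $t \le T_3$, and since $\min (1+y)^\sigma \w_0 \ge 2\delta$, the RHS exceeds $3\delta/2 > \delta$.

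For (ii), I would split the cases. When $s \ge 6$, the better estimate \eqref{e:weighted Linfty s6} combined with $\|I(0)\|_{L^\infty} \le 1/(4\delta^2)$ and the $\Omega$-bound from (i) allows a time $T_2 > 0$ for which the RHS stays $\le 1/(2\delta^2)$, strictly improving (ii). The harder case is $s = 4$: only the weaker \eqref{e:I s4} is available, and the term $6 C^2 \Omega(t)^2$ does not vanish at $t=0$. Combined with (i), this yields $6 C^2 \Omega(t)^2 \le 96 C^2 \|\w_0\|^2_{\Hg}$, so the smallness hypothesis \eqref{e:mainthm con}, with the universal constant chosen sufficiently small, forces $6 C^2 \Omega(t)^2 \le 1/(4\delta^2) \le \|I(0)\|_{L^\infty}$. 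The $\max$ in \eqref{e:I s4} then collapses to $\|I(0)\|_{L^\infty}$, and shrinking $T_2$ to absorb the exponential factor gives the strict improvement of (ii). This is the main obstacle of the proof: without \eqref{e:mainthm con}, the bootstrap for $s=4$ cannot be closed even instantaneously.

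Setting $T := \min(T_1, T_2, T_3) > 0$ (depending only on $s, \gamma, \sigma, \delta, \|\w_0\|_{\Hg}, U$), we have shown that on $[0, T \wedge T^*_\e]$ all three bounds hold with strict improvement. By continuity this rules out $T^*_\e < T \wedge T_\e$, hence $T^*_\e \ge T \wedge T_\e$. If $T_\e < T$, the uniform bounds of (i)--(iii) keep $\wep(T_\e)$ in a bounded subset of the regularity class of Proposition \ref{p:loc sol RPE} (the higher-order $H^{s+4,\gamma}$ estimates needed to restart the scheme follow from standard parabolic smoothing for the regularization \eqref{e:RPE}, as $\e > 0$ renders the system uniformly parabolic), so one reapplies Proposition \ref{p:loc sol RPE} at $t = T_\e$ and iterates finitely many times to reach $t = T$. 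The resulting $\wep$ lies in $C([0,T]; \Hs) \cap C^1([0,T]; H^{s-2,\gamma})$ and satisfies \eqref{e:uniform weighted Hs wep}--\eqref{e:weighted Linfty lower} uniformly in $\e$, completing the proof.
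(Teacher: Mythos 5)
Your proposal is correct and follows essentially the same route as the paper: a continuity/bootstrap argument that feeds the uniform estimates of Proposition \ref{p:Uniform RPE} back into themselves, with three separate times $T_1, T_2, T_3$ closing \eqref{e:uniform weighted Hs wep}, \eqref{e:weighted Linfty bound} and \eqref{e:weighted Linfty lower} respectively, and with the smallness hypothesis \eqref{e:mainthm con} used exactly where the paper uses it, namely to tame the non-vanishing term $6C^2\Omega(t)^2$ in \eqref{e:I s4} when $s=4$. The only slight imprecision is the claim that the $\max$ in \eqref{e:I s4} ``collapses to $\|I(0)\|_{L^\infty}$'' --- since $\|I(0)\|_{L^\infty}\le 1/(4\delta^2)$ is only an upper bound, the correct conclusion is that both entries of the $\max$ are $\le 1/(4\delta^2)$, which is all that is needed.
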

%
\begin{proof}
The uniform life-span $T := T(s,\gamma, \sigma, \delta, \|\w_0\|_{H^{s, \gamma}}, U)$
can be guaranteed by the uniform estimates \eqref{e:uniform weighted Hs wep} -
\eqref{e:weighted Linfty lower}, so it suffices to justify them. Indeed, the life-span
$T$ can be taken as $\min \{T_1, T_2, T_3 \}$ where $T_1, T_2$ and $T_3$ will be
defined below.\\
\\
(i) According to the definition \eqref{e:F eqn} of $F$ and the regularized Bernoulli's
law \eqref{e:RBer},
\[
    \|F\|_{L^\infty} \le \constC \{1 + \sum^{\frac{s}{2}+1}_{l=0}
    \|\dt^l U \|^2_{H^{s-2l+2} (\T)}  \}^2  \le  \constC M_U
\]
where $M_U := \sup_t \{1 + \sum^{\frac{s}{2}+1}_{l=0} \|\dt^l U\|^2_{H^{s-2l+2} (\T)} \}^2
< + \infty$, so if we take
$T_1 := \min \Big\{ \frac{3 \|\w_0\|^2_{\Hg}}{\constC M_U}\,$,
    $\frac{1-2^{-s+2}}{2^{s-2} \constC \|\w_0\|^{s-2}_{\Hg}} \Big\}$,
then by inequality \eqref{e:Hsgamma}, estimate \eqref{e:uniform weighted Hs wep} holds
for all $t \in [0,T_1]$.\\
\\
(ii) When $s \ge 6$, using part (i) of proposition \ref{p:lifespan}, we know from the
definition \eqref{e:G eqn} of $\Omega$ and $G$ that for any $t \in [0,T_1]$ where $T_1$ is
defined in part (i),
%
\begin{equation} \label{e:K eqn}
    \Omega(t) \le 4 \|\w_0\|_{\Hg}  \qquad \text{ and } \qquad
    G(t) \le 4 \|\w_0\|_{\Hg} + M_U = : K.
\end{equation}
Thus, if we take $T_2 := \min \left\{T_1, \frac{1}{64\delta^2 C_{s, \gamma}
(1+ 4 \|\w_0\|_{\Hg}) \|\w_0\|^2_{\Hg}},
\frac{\ln 2}{\constC (1+K)} \right\}$, then using inequality \eqref{e:weighted Linfty s6}
and the initial assumption $\sum_{|\alpha| \le 2} |(1+y)^{\sigma + \alpha_2} \Dalpha
\w_0 |^2 \le \frac{1}{4\delta^2}$, we have the upper bound \eqref{e:weighted Linfty bound}
for all $t \in [0,T_2]$.

When $s = 4$, using inequality \eqref{e:I s4}, estimate \eqref{e:K eqn} and the initial hypothesis
\eqref{e:mainthm con}, we also have the upper bound \eqref{e:weighted Linfty bound} for all
$t \in [0, T_2]$.\\
\\
(iii) Let us take $T_3 := \min \left\{T_1, \frac{\delta}{8 C_{s, \gamma} \|\w_0\|_{\Hg}},
\frac{1}{6\constC (1+K)}, \frac{\ln 2}{\constC (1+K)} \right\}$. Then using inequalities
\eqref{e:weighted Linfty s4} and \eqref{e:K eqn}, we know that the lower bound
\eqref{e:weighted Linfty lower} holds for all $t \in [0,T_3]$.

\end{proof}
\hfill\\
\underline{\textbf{Convergence and Consistency}}

Using almost equivalence relation \eqref{e:almost equiv} and uniform weighted
$H^s$ estimate \eqref{e:uniform weighted Hs wep}, we have
%
\begin{equation}  \label{e:wep uep-U bdds}
  \sup_{0\leq t \leq T}  ( \|\wep\|_{H^{s,\gamma}} + \|\uep -U\|_{H^{s,\gamma-1}} )
    \le \constC \{4 \|\w_0\|_{\Hg} + \|\dx^s U\|_{L^2} \} < + \infty.
\end{equation}
Furthermore, using evolution equations  $\eqref{e:evlt eqn}$, uniform $H^s$ bound
\eqref{e:wep uep-U bdds} and proposition \ref{p:L2Linfty uvwgk}, one  also find
that $\dt \wep $ and $\dt (\uep-U)$ are uniformly (in $\e$) bounded in
$L^\infty([0,T];  H^{s-2, \gamma})$ and $L^\infty([0,T];  H^{s-2, \gamma-1})$ respectively.
By the Lions-Aubin Lemma  and the compact embedding of  $ H^{s,\gamma} $ in $H^{s'}_{loc}$
stated in  lemma \ref{l:f eqn}, we  have, taking a subsequence if necessary,
as $\e_k \to 0^+$,
%
\begin{equation} \label{e:converge wep uep-U}
\left\{\begin{aligned}
    &\w^{\e_k} \stackrel{*}{\rightharpoonup} \w \text{ in } L^\infty([0,T]; H^{s,\gamma})
     & \text{ and }\qquad
    \w^{\e_k} \rightarrow \w \text{ in } C([0,T]; H^{s'}_{loc}),\\
    &u^{\e_k} - U \stackrel{*}{\rightharpoonup} u-U \text{ in } L^\infty([0,T]; H^{s,\gamma-1})
    & \text{ and } \qquad
    u^{\e_k} \rightarrow u \text{ in } C([0,T]; H^{s'}_{loc}),
\end{aligned}\right.
\end{equation}
for all $s' < s$, where $\w \in L^\infty([0,T];H^{s,\gamma})\cap \bigcap_{s'<s}C([0,T];H^{s'}_{loc})$, $u-U \in    L^\infty([0,T]; H^{s,\gamma-1} ) \cap \bigcap_{s'<s}C([0,T];H^{s'}_{loc})$ and $\w = \dy u$.
Using the local uniform convergence of $\dx u^{\e_k}$, we also have the
pointwise convergence of $v^{\e_k}$: as $\e_k \to 0^+$,
%
\begin{equation}  \label{e:converge vep}
    v^{\e_k} = - \int^y_0 \dx u^{\e_k}\, dy \rightarrow - \int^y_0 \dx u \, dy =: v.
\end{equation}

Combining \eqref{e:converge wep uep-U} - \eqref{e:converge vep}, one
may justify the pointwise convergences of all terms in the regularized Prandtl equations
$\eqref{e:RPE}_1 - \eqref{e:RPE}_4$. Thus, passing to the limit $\e_k \to 0^+$ in
$\eqref{e:RPE}_1 - \eqref{e:RPE}_4$ and the regularized Bernoulli's law \eqref{e:RBer},
we know that the limit $(u,v)$ solves the Prandtl equations $\eqref{e:PE}_1 -
\eqref{e:PE}_4$ with the Bernoulli's law \eqref{e:Ber} in the classical sense.

Lastly, in order to complete the proof of consistency, it remains to justify that
$\w \in L^\infty([0,T];\Hs)$
and the matching condition
$\eqref{e:PE}_5$. Since $\Dalpha   \w^{\e_k}$ converges to $\Dalpha  \w$ pointwisely
for all $|\alpha| \le 2$ and that  $\w^{\e_k} $ satisfies
%
\begin{equation}  \label{e:pf f in Hs}
    \sum_{|\alpha| \le 2} |(1+y)^{\sigma + \alpha_2} \Dalpha  \w^{\e_k} |^2  \le \frac{1}{\delta^2}
    \qquad \text{ and } \qquad (1+y)^\sigma \w^{\e_k} \ge \delta,
\end{equation}
we deduce that \eqref{e:pf f in Hs} still holds for $\w$, and hence,
$\w \in  L^\infty(0,T;\Hs) $.

Also, by the Lebesgue's dominated convergence theorem,
\[
    \int^{+\infty}_0 \w \, dy = \lim_{\e_k \to 0^+} \int^{+\infty}_0 \w^{\e_k} \, dy = U
\]
which is equivalent to the matching condition $\eqref{e:PE}_5$ because $\w = \dy u
> 0$.

To complete the proof of existence, let us state and prove the following

%
\begin{lem}  \label{l:f eqn}
Let $s$ be a positive integer, $\gamma' \ge 0$ and $M < +\infty$. Assume
%
\begin{equation} \label{e:M eqn}
    \|f^\e\|_{H^{s,\gamma'}} \le M
\end{equation}
for all $\e \in (0,1]$. Then there exist a function $f \in H^{s, \gamma'}$
and a sequence $\{\e_k\}_{k\in\N} \subseteq (0,1]$ with $\lim_{k \to + \infty}
\e_k = 0^+$ such that as $\e_k \to 0^+$,
%
\begin{equation} \label{e:converge f}
    f^{\e_k} \stackrel{H^{s,\gamma'}}{\rightharpoonup} f \quad \text{ and } \quad
    f^{\e_k} \stackrel{H^{s'}_{loc}}{\rightarrow} f \qquad \text{ for all } s' < s.
\end{equation}

\end{lem}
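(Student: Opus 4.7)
The plan is to combine standard weak compactness in a Hilbert space with a Rellich-based diagonal extraction. First, I would observe that $H^{s,\gamma'}$ equipped with the inner product induced by \eqref{e:w2Hsgamma} is a separable Hilbert space, so by the Banach--Alaoglu theorem the bounded sequence $\{f^\e\}$ from \eqref{e:M eqn} admits a weakly convergent subsequence $f^{\e_k} \rightharpoonup f$ in $H^{s,\gamma'}$, with $\|f\|_{H^{s,\gamma'}} \le \liminf_k \|f^{\e_k}\|_{H^{s,\gamma'}} \le M$ by lower semicontinuity of the norm. This gives the first half of \eqref{e:converge f} for free.

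For the strong local convergence, I would exhaust $\TR$ by the bounded cylinders $\Omega_n := \T \times [0,n]$ for $n \in \N$. On each $\Omega_n$ the weights $(1+y)^{\gamma'+\alpha_2}$ are bounded above and below by positive constants, so the restriction map $H^{s,\gamma'}(\TR) \to H^s(\Omega_n)$ is continuous, and hence $\{f^\e|_{\Omega_n}\}$ is bounded in the classical Sobolev space $H^s(\Omega_n)$. By the Rellich--Kondrachov theorem the embedding $H^s(\Omega_n) \hookrightarrow H^{s-1}(\Omega_n)$ is compact, so I can extract a subsequence converging strongly in $H^{s-1}(\Omega_n)$. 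A Cantor diagonal argument over $n$ then yields a single subsequence, still denoted $f^{\e_k}$, that converges strongly in $H^{s-1}(\Omega_n)$ for every $n$; the limit must coincide with the weak limit $f$ by uniqueness.

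To upgrade from $H^{s-1}_{loc}$ to $H^{s'}_{loc}$ for all $s' < s$, I would invoke the standard Gagliardo--Nirenberg interpolation on each $\Omega_n$. When $s-1 \le s' < s$, writing $\theta := s'-(s-1) \in [0,1)$, one has
\begin{equation*}
    \|f^{\e_k}-f\|_{H^{s'}(\Omega_n)} \le C_n \|f^{\e_k}-f\|_{H^{s-1}(\Omega_n)}^{1-\theta}\, \|f^{\e_k}-f\|_{H^s(\Omega_n)}^{\theta},
\end{equation*}
which tends to $0$ as $k \to \infty$ since the first factor vanishes while the second is bounded by $(2M)^\theta$ times a constant depending only on $n$. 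For $s' < s-1$ the continuous embedding $H^{s-1}(\Omega_n) \hookrightarrow H^{s'}(\Omega_n)$ handles things directly.

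I do not anticipate a genuine obstacle: the only mild point is verifying that the weighted norm $\|\cdot\|_{H^{s,\gamma'}}$ controls the unweighted $H^s$ norm on each $\Omega_n$, which is immediate because $(1+y)^{\gamma'+\alpha_2}$ is pinched between $1$ and $(1+n)^{\gamma'+s}$ on $\Omega_n$. Everything else is a routine application of Banach--Alaoglu, Rellich--Kondrachov, and interpolation, combined by a standard diagonal procedure.
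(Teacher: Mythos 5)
Your proposal is correct and follows essentially the same route as the paper: weak compactness from the Hilbert-space structure of $H^{s,\gamma'}$ via Banach--Alaoglu, then strong $H^{s'}_{loc}$ convergence from the compact embedding of $H^s$ on bounded pieces of $\TR$. You have simply spelled out in full (exhaustion by cylinders, Rellich--Kondrachov, diagonal extraction, interpolation to reach all $s'<s$) what the paper compresses into the phrase ``standard compactness of $H^s(\TR)$.''
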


\begin{proof}[Proof of lemma \ref{l:f eqn}]
First of all, let us mention that $H^{s,\gamma'}$ has an inner product structure:
\[
    <\phi,\psi>_{H^{s,\gamma'}} := \sum_{|\alpha| \le s} \int^{+\infty}_0 \int_{\T}
    (1+y)^{2\gamma' + 2 \alpha_2} \Dalpha \phi \Dalpha \psi,
\]
so the uniform bound \eqref{e:M eqn} implies the weak convergence of $f^{\e_k}$ in \eqref{e:converge f}
via the Banach-Alaoglu theorem.

Next, by the definition of $\|\cdot\|_{H^{s,\gamma'}},\;
\|f^\e\|_{H^s} \le \|f^\e\|_{H^{s,\gamma'}} \le M$. This implies the local $H^{s'}$ norm
convergence in \eqref{e:converge f} for all
$s' < s$ because of the standard compactness of $H^s (\TR)$.

\end{proof}

Finally, let us end this subsection by giving the following
%
%
\begin{rem}[Life-span for $U\equiv$ constant]\label{r:remark}
In the special case that $U > 0$ is a constant, one may prove that the life-span $T$ stated in our main theorem
\ref{t:Hs exist PS} is independent of the constant value of $U$, that is,
$T := T(s, \gamma, \sigma, \delta, \|\w_0\|_{H^{s, \gamma}})$. The reasoning is as follows:

When $U \equiv$ constant, it follows from the regularized Bernoulli's law
\eqref{e:RBer} that $\dx \pep \equiv 0$, so by definitions \eqref{e:F eqn}
and \eqref{e:G eqn}, we have $F \equiv \constC$ and $G(t) = \Omega(t) = \sup_{[0,t]} \|\wep\|_{\Hg}$ where all of
$F$, $G$ and $\Omega$ are independent of $U$. As a result, all of our
weighted estimates \eqref{e:Hsgamma}, \eqref{e:I s4}, \eqref{e:weighted Linfty s6} and
\eqref{e:weighted Linfty s4} are independent of $U$. Therefore, one may
slightly modify the proof of proposition \ref{p:lifespan} to show that
uniform weighted estimates \eqref{e:uniform weighted Hs wep} -
\eqref{e:weighted Linfty lower} hold in a time interval
$[0, T_{s, \gamma, \sigma, \delta, \|\w_0\|_{H^{s, \gamma}}}]$, which is
independent of $U$. According to our proof of convergence and
consistency in subsection \ref{ss:exist PE}, we can solve the solution
$(u, v)$ of the Prandtl equations \eqref{e:PE} in the same time interval,
and hence, the life-span $T$ stated in the main theorem \ref{t:Hs exist PS}
is also independent of $U$.
\end{rem}

%
%
%
\subsection{Uniqueness for the Prandtl Equations} \label{ss:unique PE}
The aim of this section is to prove the uniqueness of $\Hs$ solutions constructed in
subsection \ref{ss:exist PE}. To show the uniqueness, we will generalize the nonlinear
cancelation applied in subsubsection \ref{sss:geps} to the $L^2$ comparison of two
$\Hs$ solutions. This motivates us to consider the quantity $\tilde{g}$ below.

Specifically, the uniqueness of $\Hs$ solutions to the Prandtl equations \eqref{e:PE}
is a direct consequence of the following $L^2$ comparison principle.
%
\begin{prop}[$L^2$ Comparison Principle] \label{p:L2 compare}
For any $s \ge 4, \gamma \ge 1, \sigma > \gamma + \frac{1}{2}$ and $\delta \in (0,1)$,
let $(u_i, v_i)$ solve the Prandtl equations \eqref{e:PE} with the vorticity $\w_i
:= \dy u_i \in C([0,T]; \Hs) \cap C^1 ([0,T]; H^{s-2,\gamma})$ for $i = 1, 2$. Define
$\tilde{g} := \w_1 - \w_2 + \frac{\dy\w_2}{\w_2} (u_1 - u_2)$. Then
we have
%
\begin{equation} \label{e:tilde g est}
    \|\tilde{g} (t)\|^2_{L^2} + \int^t_0 \|\dy \tilde{g} \|^2_{L^2}
    \le \|\tilde{g} (0) \|^2_{L^2} + C_{\gamma, \sigma, \delta, \w, U}
    \int^t_0 \|\tilde{g} \|^2_{L^2}
\end{equation}
where the constant $ C_{\gamma, \sigma, \delta, \w, U}$ depends on $\gamma, \sigma,
\delta, \|\w_1\|_{H^{4,\gamma}_g}, \|\w_2\|_{H^{4,\gamma}_g}$ and
$\|\dx^4 U\|_{L^2 (\T)}$ only.

\end{prop}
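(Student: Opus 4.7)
Write $\bar u := u_1 - u_2$, $\bar v := v_1 - v_2$, $\bar\w := \w_1 - \w_2$, $a := \dy \w_2 / \w_2$, and note that both solutions share the same outer flow $U$ and hence the same pressure. The plan is to derive an evolution equation for $\tilde g$ in which the loss-of-derivative term $\bar v \dy \w_2$ is cancelled, then run an $L^2$ energy estimate and close by Gronwall. Subtracting the two vorticity equations gives
\[
(\dt + u_1\dx + v_1\dy - \dyy)\bar\w \;=\; -\bar u\,\dx\w_2 - \bar v\,\dy\w_2,
\]
and subtracting the two velocity equations (the pressure cancels) gives
\[
(\dt + u_1\dx + v_1\dy - \dyy)\bar u \;=\; -\bar u\,\dx u_2 - \bar v\,\w_2.
\]
Since $\dy\w_2 = a\w_2$, the combination $\tilde g$ (which amounts to $\bar\w - a\bar u$ up to the sign convention in the statement) eliminates the $\bar v$ contributions completely; using $\dy\bar u = \bar\w = \tilde g + a\bar u$ to rewrite the remaining pieces yields
\[
(\dt + u_1\dx + v_1\dy - \dyy)\tilde g \;=\; 2\,\dy a\,\tilde g + \bar u \cdot \mathcal{R},
\]
where $\mathcal{R} := \dyy a + 2 a\,\dy a - \dx\w_2 + a\,\dx u_2 - (\dt + u_1\dx + v_1\dy)a$ depends only on $u_2$ and $\w_2$ and their derivatives of order at most two in $y$. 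The Oleinik monotonicity together with $\w_2 \in \Hs$ guarantees that $a$ and $\mathcal{R}$ are pointwise bounded with sufficient polynomial decay in $y$.

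Next, I multiply by $\tilde g$ and integrate over $\TR$. Using $\dx u_1 + \dy v_1 = 0$ with $v_1|_{y=0}=0$ to discard the transport contribution and integrating by parts in $y$ produces
\[
\tfrac{1}{2}\tfrac{d}{dt}\|\tilde g\|^2_{L^2} + \|\dy\tilde g\|^2_{L^2} + \int_{\T}\tilde g\,\dy\tilde g\big|_{y=0}\,dx \;=\; \int_{\TR}2\,\dy a\,\tilde g^2 + \int_{\TR}\bar u\,\mathcal{R}\,\tilde g.
\]
For the boundary integral at $y=0$, the facts $\bar u|_{y=0}=0$ and $\dy\bar\w|_{y=0}=\dx p - \dx p = 0$ give $\tilde g|_{y=0}=\bar\w|_{y=0}$ and $\dy\tilde g|_{y=0} = -(a\,\tilde g)|_{y=0}$; the bound $\|a|_{y=0}\|_{L^\infty(\T)}\le\delta^{-2}$ together with the one-dimensional trace identity $\int_\T \tilde g^2|_{y=0}\,dx \le 2\|\tilde g\|_{L^2}\|\dy\tilde g\|_{L^2}$ and Young's inequality absorb a small fraction of $\|\dy\tilde g\|^2_{L^2}$ at the cost of $C_\delta\|\tilde g\|^2_{L^2}$. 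Boundary terms at $y=+\infty$ are handled by the cutoff-and-limit procedure of remark \ref{r:bdry infty}(ii), which is legitimate because $\sigma>\gamma+\tfrac{1}{2}$ provides enough tail integrability for $\tilde g$ and $\bar u$; the term $\int 2\,\dy a\,\tilde g^2$ is harmless since $|\dy a|\le C_\delta$.

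The main obstacle is estimating $\int \bar u\,\mathcal{R}\,\tilde g$ by $\|\tilde g\|^2_{L^2}$ alone. The key device is the integrating-factor identity $\dy(\bar u/\w_2) = \tilde g/\w_2$, obtained from the linear ODE $\dy\bar u - a\bar u = \tilde g$ with $\bar u|_{y=0}=0$; this yields the explicit representation $\bar u(y) = \w_2(y)\int_0^y \tilde g(y')/\w_2(y')\,dy'$. Using the pointwise bounds $\delta (1+y)^{-\sigma}\le\w_2\le\delta^{-1}(1+y)^{-\sigma}$ inherited from $\w_2\in\Hs$, together with Cauchy--Schwarz in $y$ and a Hardy-type estimate in the spirit of lemma \ref{l:Hardy}, I obtain a bound of the form $\|\bar u/(1+y)\|_{L^2}\le C_{\gamma,\sigma,\delta}\|\tilde g\|_{L^2}$. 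Since $(1+y)\mathcal{R}$ is bounded in $L^\infty$ (a consequence of $\w_1,\w_2\in H^{4,\gamma}_g$, $\sigma>\gamma+\tfrac{1}{2}$, and $\|\dx^4 U\|_{L^2(\T)}<\infty$), Cauchy--Schwarz then gives $\left|\int \bar u\,\mathcal{R}\,\tilde g\right|\le C_{\gamma,\sigma,\delta,\w,U}\|\tilde g\|^2_{L^2}$.

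Assembling these pieces yields the differential inequality $\tfrac{d}{dt}\|\tilde g\|^2_{L^2} + \|\dy\tilde g\|^2_{L^2} \le C_{\gamma,\sigma,\delta,\w,U}\|\tilde g\|^2_{L^2}$; integrating in time produces \eqref{e:tilde g est}, and Gronwall together with $\tilde g|_{t=0}=0$ (which holds whenever $u_1(\cdot,0) = u_2(\cdot,0)$) delivers uniqueness of $\Hs$ solutions. The anticipated difficulty is the Hardy step: calibrating the weights carefully enough that the source term is controlled by $\|\tilde g\|^2_{L^2}$ with a constant depending only on the data allowed by the statement, rather than requiring the smallness of $\delta$. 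This is precisely where the monotonicity assumption enters through the strict positivity lower bound $\w_2\ge\delta(1+y)^{-\sigma}$.
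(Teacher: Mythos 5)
Your proposal is essentially the paper's own proof: the same cancellation of the problematic $\tilde v\,\dy\w_2$ term via the combination $\tilde{g}=\tilde{\w}-a_2\tilde u=\w_2\dy(\tilde u/\w_2)$, the same $L^2$ energy estimate with the cutoff-and-limit treatment of the terms at $y=+\infty$, the same reduction of the boundary integral through $\dy\tilde g|_{y=0}=-a_2\tilde g|_{y=0}$ and a trace inequality, and the same Hardy-type bound $\|\tilde u/(1+y)\|_{L^2}\le C_{\sigma,\delta}\|\tilde g\|_{L^2}$ coming from the integrating-factor identity. The one inaccuracy is your description of $\mathcal{R}$ as depending only on the background $(u_2,\w_2)$: as written it contains $\dyy a$ and $(\dt+u_1\dx+v_1\dy)a$ separately, each involving third $y$-derivatives or time derivatives of $\w_2$ that are not pointwise controlled when $s=4$, and only after inserting the evolution equation for $a_2$ does it collapse to $\mathcal{R}=-\tilde u\,\dx a_2-\tilde v\,\dy a_2$ (matching the paper's \eqref{e:tilde g eqn}), which depends on the differences as well; this does not harm the argument, since $\|(1+y)\mathcal{R}\|_{L^\infty}$ is then bounded by proposition \ref{p:L2Linfty uvwgk} applied to both solutions together with the pointwise bounds on $\dx a_2$ and $\dy a_2$, and the estimate closes exactly as you claim.
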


Applying the Gronwall's lemma to \eqref{e:tilde g est}, we obtain
\[
    \|\tilde{g}(t)\|^2_{L^2} \le \| \tilde{g} (0) \|^2_{L^2}
    e^{C_{\gamma, \sigma, \delta, \w, U}t}
\]
which implies $\tilde{g} \equiv 0$ provided that $u_1|_{t=0} = u_2 |_{t=0}$. Since $\w_2 \dy \left( \frac{u_1-u_2}{\w_2} \right) = \tilde{g} \equiv 0$,
we have
%
\begin{equation}  \label{e:u1-u2}
    u_1 - u_2 = q \w_2
\end{equation}
for some function $q := q(t,x)$. Using the Oleinik's monotonicity assumption $\w_2 > 0$
and the Dirichlet boundary condition $u_i |_{y=0} \equiv 0$ for $i = 1, 2$,
we know via \eqref{e:u1-u2} that $q \equiv 0$, and hence, $u_1 \equiv u_2$.
Since $v_i$ can be uniquely determined by $u_i$, we also have $v_1 \equiv
v_2$. This proves the uniqueness of $\Hs$ solutions.

In the rest of this subsection, we will prove proposition \ref{p:L2 compare} as follows.
%
%
\begin{proof}[Proof of proposition \ref{p:L2 compare}]
Let us denote $(\tilde{u}, \tilde{v}) = (u_1, v_1) - (u_2, v_2)$ and
$a_2 := \frac{\dy\w_2}{\w_2}$. Then one may check that $\tilde{g} = \tilde{\w}
-a_2 \tilde{u} = \w_2 \dy \left(\frac{\tilde{u}}{\w_2} \right)$ and satisfies
%
\begin{equation} \label{e:tilde g eqn}
    (\dt+u_1 \dx + v_1 \dy - \dyy) \tilde{g}
    = - 2 \tilde{\w} \dy a_2 - \tilde{u} \{\tilde{u} \dx a_2 + \tilde{v} \dy a_2
    + 2 a_2 \dy a_2 \}.
\end{equation}

To derive the $L^2$ estimates on $\tilde{g}$, let us first recall that we define the cutoff function
$\chi_R (y) := \chi (\frac{y}{R} )$ for any $R \ge 1$, where $\chi \in C^\infty_c
([0, +\infty))$ satisfies the properties \eqref{e:chi properties}. Then $\chi_R$ has
the following pointwise properties: as $R \to + \infty$,
\[
     \chi_R \to \mathbf{1}_{\R^+}, \quad |\chi'_R| \le \frac{2}{R} \to 0^+ \quad\text{ and }\quad
     |\chi''_R| \le O(\frac{1}{R^2}) \to 0^+.
\]
For any $t \in (0,T]$, multiplying equation \eqref{e:tilde g eqn} by $2 \chi_R
\tilde{g}$, and then integrating over $[0,t] \times \TR$, we obtain, via
integration by parts,
%
\begin{align}
    & \; \iint \chi_R \tilde{g}^2 (t) \, dydx - \iint \chi_R \tilde{g}^2 |_{t=0}\,
    dydx \notag\\
    = & \; -2 \int^t_0 \iint \chi_R |\dy \tilde{g} |^2 - 2 \int^t_0 \int_{\T}
    \tilde{g} \dy \tilde{g} |_{y=0} \, dx - 4 \int^t_0 \iint \chi_R \tilde{g}
    \tilde{\w} \dy a_2 \notag\\
    & \; -2 \int^t_0 \iint \chi_R \tilde{g} \tilde{u} \{\tilde{u} \dx a_2
    + \tilde{v} \dy a_2 + 2 a_2 \dy a_2 \} + \mathcal{R}_1 + \mathcal{R}_2 \notag\\
    \le & \; -2 \int^t_0 \iint \chi_R |\dy \tilde{g} |^2 - 2 \int^t_0 \int_{\T}
    \tilde{g} \dy \tilde{g} |_{y=0} \, dx +4 \|\dy a_2\|_{L^\infty} \int^t_0
    \|\tilde{g}\|_{L^2} \|\tilde{\w}\|_{L^2} \notag\\
    & \; + 2 \|(1+y) \{\tilde{u} \dx a_2 + \tilde{v} \dy a_2 + 2 a_2 \dy a_2 \}
    \|_{L^\infty} \int^t_0 \|\tilde{g}\|_{L^2} \left\| \frac{\tilde{u}}{1+y}
    \right\|_{L^2} + \mathcal{R}_1 + \mathcal{R}_2, \label{e:chi tilde g}
\end{align}
where the remainder terms $\mathcal{R}_i$ are defined by
\[
    \mathcal{R}_1 := \int^t_0 \iint \chi'_R v_1 \tilde{g}^2 \quad \text{ and }
    \quad \mathcal{R}_2 := \int^t_0 \iint \chi''_R \tilde{g}^2.
\]

Now, the first technical problem is to deal with the boundary integral
$\int^t_0 \int_{\T} \tilde{g} \dy \tilde{g} |_{y=0} \, dx$. Since $\dy \tilde{g}
|_{y=0} = - a_2 \tilde{g} |_{y=0}$, we have, after applying the simple trace estimate
\eqref{e:trace est}, \
%
\begin{equation} \label{e:int tilde g}
    \left| \int^t_0 \int_{\T} \tilde{g} \dy \tilde{g} |_{y=0} \, dx \right|
    \le \frac{1}{2} \int^t_0 \iint \chi_R |\dy \tilde{g}|^2
    + C \{ \|a_2\|_{L^\infty} + \|a_2\|^2_{L^\infty} + \|\dy a_2\|_{L^\infty} \}
    \int^t_0 \iint \tilde{g}^2.
\end{equation}

Furthermore, since $\w_2 \in \Hs$, it follows from the weighted $L^\infty$ bounds on $\w_2$ that
%
\begin{equation}  \label{e:Linfty a2}
\left\{\begin{aligned}
    \|(1+y) a_2 \|_{L^\infty} & \le \delta^{-2}\\
    \|(1+y) \dx a_2 \|_{L^\infty}, \|(1+y)^2 \dy a_2\|_{L^\infty}
    & \le \delta^{-2} + \delta^{-4},
\end{aligned}\right.
\end{equation}
so by proposition \ref{p:L2Linfty uvwgk},
%
\begin{equation}  \label{e:Linfty a2 2}
    \|(1+y) \{\tilde{u} \dx a_2 + \tilde{v} \dy a_2 + 2 a_2 \dy a_2\}
    \|_{L^\infty}
    \le  C_{\gamma, \sigma, \delta} \{1+ \|\w_1\|_{H^{4,\gamma}_g}
    + \|\w_2\|_{H^{4,\gamma}_g} + \|\dx^4 U\|_{L^2 (\T)} \}.
\end{equation}

Substituting \eqref{e:int tilde g} - \eqref{e:Linfty a2 2} into
\eqref{e:chi tilde g}, we obtain
%
\begin{equation} \label{e:tilde g2}
\begin{split}
    & \; \iint \chi_R \tilde{g}^2 (t) \, dydx - \iint \chi_R \tilde{g}^2
    |_{t=0}\, dydx\\
    \le & \; - \int^t_0 \iint \chi_R |\dy \tilde{g} |^2 + C_\delta \int^t_0
    \| \tilde{g} \|^2_{L^2} \\
    & \; + C_{\gamma, \sigma, \delta, \|\w_1\|_{H^{4,\gamma}_g},
    \|\w_2\|_{H^{4,\gamma}_g}, \|\dx^4 U\|_{L^2}} \int^t_0 \|\tilde{g}\|_{L^2}
    \left\{ \|\tilde{\w}\|_{L^2} + \left\| \frac{\tilde{u}}{1+y} \right\|_{L^2}
    \right\} + \mathcal{R}_1 + \mathcal{R}_2.
\end{split}
\end{equation}

Next, we emphasize that both $\tilde{\w}$ and $\frac{\tilde{u}}{1+y}$ can be
controlled by $\tilde{g}$, namely,
%
%
\begin{claim} \label{claim:tilde w and u}
%
\begin{equation} \label{e:tilde w and u}
    \|\tilde{\w}\|_{L^2}, \left\| \frac{\tilde{u}}{1+y} \right\|_{L^2}
    \le C_{\sigma, \delta} \|\tilde{g}\|_{L^2}.
\end{equation}
\end{claim}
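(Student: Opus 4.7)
The plan is to exploit the ODE-style identity $\tilde g = \w_2\,\dy(\tilde u/\w_2)$ that is implicit in the very definition of $\tilde g$; this is the comparison-level analogue of the cancellation used in subsubsection \ref{sss:geps}. Setting $F := \tilde u/\w_2$, the no-slip conditions $u_i|_{y=0}=0$ together with the Oleinik lower bound $\w_2(t,x,0)\geq\delta$ ensure that $F|_{y=0}\equiv 0$, while $\dy F = \tilde g/\w_2$. In this way, the whole claim reduces to a weighted one-dimensional Hardy inequality applied to $F$ in the $y$ variable, uniformly in $x$.

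The central step is to invoke the Hardy-type inequality already used in the proof of \eqref{e:claim gs 4} (lemma \ref{l:Hardy}, part (ii)): for any exponent $\beta < -\tfrac12$ and any $F$ with $F|_{y=0}=0$ and suitable decay at infinity,
\begin{equation*}
    \|(1+y)^\beta F\|_{L^2(\TR)}\;\leq\;C_\beta\,\|(1+y)^{\beta+1}\,\dy F\|_{L^2(\TR)}.
\end{equation*}
With the choice $\beta := -\sigma-1$ (admissible since $\sigma>\gamma+\tfrac12>0$), this yields $\|(1+y)^{-\sigma-1}F\|_{L^2}\leq C_\sigma\|(1+y)^{-\sigma}\tilde g/\w_2\|_{L^2}$. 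To recover unweighted norms on both sides, I would use the pointwise bounds $\delta(1+y)^{-\sigma}\leq\w_2\leq\delta^{-1}(1+y)^{-\sigma}$ coming from $\w_i\in\Hs$: on the right, $(1+y)^{-\sigma}/\w_2\leq\delta^{-1}$, so $\|(1+y)^{-\sigma}\tilde g/\w_2\|_{L^2}\leq\delta^{-1}\|\tilde g\|_{L^2}$; on the left, $|\w_2/(1+y)|\leq\delta^{-1}(1+y)^{-\sigma-1}$, whence
\begin{equation*}
    \|\tilde u/(1+y)\|_{L^2}\;\leq\;\delta^{-1}\|(1+y)^{-\sigma-1}F\|_{L^2}\;\leq\;C_{\sigma,\delta}\,\|\tilde g\|_{L^2}.
\end{equation*}

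The companion bound on $\tilde\w$ then follows directly from the algebraic identity $\tilde\w=\tilde g+a_2\tilde u$ (which is the definition of $\tilde g$ rearranged), together with the estimate $\|(1+y)a_2\|_{L^\infty}\leq\delta^{-2}$ already recorded in \eqref{e:Linfty a2}:
\begin{equation*}
    \|\tilde\w\|_{L^2}\;\leq\;\|\tilde g\|_{L^2}+\|(1+y)a_2\|_{L^\infty}\,\|\tilde u/(1+y)\|_{L^2}\;\leq\;C_{\sigma,\delta}\,\|\tilde g\|_{L^2}.
\end{equation*}
The one technical subtlety is justifying the vanishing of the boundary term at $y=+\infty$ when the Hardy inequality is obtained by integration by parts against $\dy[(1+y)^{2\beta+1}]/(2\beta+1)$. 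Since $|\tilde u|$ is globally bounded (each $u_i$ is bounded by $U$ plus an $L^\infty$-controlled perturbation) while $1/\w_2^2\leq\delta^{-2}(1+y)^{2\sigma}$, one has $(1+y)^{-2\sigma-1}F^2\leq\delta^{-2}(1+y)^{-1}|\tilde u|^2\to 0$ as $y\to+\infty$; hence the boundary contribution drops out and the Hardy inequality applies without any regularization argument. This is the only point at which the sign condition $\sigma>0$ (a much weaker consequence of $\sigma>\gamma+\tfrac12$) is required.
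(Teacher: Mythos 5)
Your argument is correct and coincides with the paper's own proof: both reduce the bound on $\|\tilde{u}/(1+y)\|_{L^2}$ to part (ii) of lemma \ref{l:Hardy} applied to $\tilde{u}/\w_2$ (which vanishes at $y=0$), using $\delta \le (1+y)^\sigma \w_2 \le \delta^{-1}$ to pass between weights, and then recover $\|\tilde{\w}\|_{L^2}$ from $\tilde{\w}=\tilde{g}+a_2\tilde{u}$ and $\|(1+y)a_2\|_{L^\infty}\le\delta^{-2}$. Your extra remark justifying the vanishing of the boundary term at $y=+\infty$ is a harmless elaboration of a point the paper leaves implicit in the (unproved) statement of lemma \ref{l:Hardy}.
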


The proof of claim \ref{claim:tilde w and u} is very similar to that of lemma
\ref{l:L2 dxk u-U dxk w gk}, so we will only outline it at the end of this
subsection. Assuming claim \ref{claim:tilde w and u} for the moment, we can
substitute \eqref{e:tilde w and u} into \eqref{e:tilde g2} to obtain
%
\begin{equation} \label{e:chi tilde g2}
\begin{split}
    & \; \iint \chi_R \tilde{g}^2 (t) \,dydx - \iint \chi_R \tilde{g}^2 |_{t=0}\,
    dydx \\
    \le & \; - \int^t_0 \iint \chi_R |\dy \tilde{g}|^2
    + C_{\gamma, \sigma, \delta, \|\w_1\|_{H^{4,\gamma}_g},
    \|\w_2\|_{H^{4,\gamma}_g}, \|\dx^4 U\|_{L^2}} \int^t_0 \|\tilde{g}\|^2_{L^2}
    + \mathcal{R}_1 + \mathcal{R}_2.
\end{split}
\end{equation}

Finally, both integrands of $\mathcal{R}_1$ and $\mathcal{R}_2$ can be
controlled by a multiple of $\tilde{g}^2$, which belongs to
$L^1 ([0,T]; \TR)$, so applying Lebesgue's dominated convergence theorem, we
have
%
%
\begin{equation} \label{e:limit Ri}
    \lim_{R \to + \infty} \mathcal{R}_i = 0 \qquad\qquad \text{ for } i=1,2.
\end{equation}

Using monotone convergence theorem and \eqref{e:limit Ri}, we can pass to the limit $R \to + \infty$ in
\eqref{e:chi tilde g2} to obtain \eqref{e:tilde g est}.

Lastly, we will justify claim \ref{claim:tilde w and u} as follows.
%
%
\begin{proof}[Proof of claim \ref{claim:tilde w and u}]
Using triangle inequality and $\eqref{e:Linfty a2}_1$, we have
$\|\tilde{\w}\|_{L^2} \le \|\tilde{g}\|_{L^2} + \delta^{-2}
\left\| \frac{\tilde{u}}{1+y} \right\|_{L^2}$,
so it suffices to control $\tilde{u}$.

Since $\delta\le (1+y)^\sigma \w_2 \le \delta^{-1}$ and $\tilde{u} |_{y=0} \equiv 0$,
applying part (ii) of lemma \ref{l:Hardy}, we obtain
\[
    \left\| \frac{\tilde{u}}{1+y} \right\|_{L^2}
     \le  \delta^{-1} \left\| (1+y)^{-\sigma-1} \frac{\tilde{u}}{\w_2}
    \right\|_{L^2}
     \le  C_{\sigma, \delta} \left\| (1+y)^{-\sigma} \dy
    \left(\frac{\tilde{u}}{\w_2} \right) \right\|_{L^2}
     \le  C_{\sigma, \delta} \| \tilde{g} \|_{L^2}
\]
because $\tilde{g}=\w_2\dy (\frac{\tilde{u}}{\w_2})$.
\end{proof}

\end{proof}

%
%
%
\section{Existence for the Regularized Prandtl Equations} \label{s:exist RPE}
The aim of this section is to solve the regularized Prandtl equations \eqref{e:RPE},
or equivalently its vorticity system \eqref{e:Rvort} - \eqref{e:Ruv}. In other words,
we will prove proposition \ref{p:loc sol RPE} according to the plan described in
section \ref{s:approx}. However, we will only sketch our proof because
 the method for solving intermediate approximate systems
\eqref{e:Rvort} - \eqref{e:Ruv}, \eqref{e:TRPE} - \eqref{e:TRuv} and
\eqref{e:LTRPE} - \eqref{e:LTRuv} is  standard.
%
Before we proceed, it should be also remarked that we will solve the approximate systems
\eqref{e:LTRPE} - \eqref{e:LTRuv}, \eqref{e:TRPE} - \eqref{e:TRuv} and \eqref{e:Rvort} -
\eqref{e:Ruv} with a decreasing order of regularities. The main reason of this technical
arrangement is to derive our estimates in a rigorous way so that we can differentiate
the intermediate equations pointwisely and have enough pointwise decay at $y = + \infty$
according to prop \ref{p:decay Hs+3}.

%
%
\subsection{Solvability of Inhomogenous Heat Equation} \label{ss:inhomo heat}
In this first subsection we will solve an inhomogenous heat equation in the
weighted space $\Hs$. This existence result will be applied to solve the linearized,
truncated and regularized vorticity system \eqref{e:LTRPE} - \eqref{e:LTRuv} in the
next subsection.

Let us consider the following inhomogenous heat equation: for any $\e > 0$,
%
\begin{equation} \label{e:inhomo heat}
\left\{\begin{aligned}
    \dt W + F_R & = \e^2 \dxx W + \dyy W && \text{ in } [0,T] \times \TR\\
    W|_{t=0} & = W_0 && \text{ on } \TR\\
    \dy W |_{y=0} & = \dx \pep && \text{ on } [0,T] \times \T
\end{aligned}\right.
\end{equation}
where $W$ is an unknown, $W_0$ and $\dx \pep$ are given initial and boundary data,
$F_R$ is a given inhomogenous term with compact support in $[0,T] \times \T
\times [0,2R]$. Since \eqref{e:inhomo heat} is just a standard inhomogenous
heat equation, we can solve it by classical methods and obtain the following
%
\begin{prop}[Existence of Inhomogenous Heat Equation] \label{p:inhomo heat}
Let $s \ge 4$ be an even integer, $\gamma \ge 1, \sigma > \gamma + \frac{1}{2},
\delta \in (0, \frac{1}{2})$ and $\e \in (0,1]$. If $W_0 \in
H^{s+12,\gamma}_{\sigma,2\delta}$ and $\operatorname{supp} F_R \subseteq [0,T]
\times \T \times [0,2R]$, then there exist a time $T := T(s,\gamma,\sigma,\delta,
R, \|W_0\|_{H^{s+8,\gamma}}, \|F_R\|_{C^2}) > 0$ and a solution $W \in
C([0,T]; H^{s+8,\gamma}_{\sigma,\delta}) \cap C^\infty ((0,T] \times \TR)$
to the inhomogenous heat equation \eqref{e:inhomo heat}.

Furthermore, we have the following pointwise decay at $y = + \infty$: for any $l=0,1,\cdots,
\frac{s}{2}+4$, for any $|\alpha| \le s-2l+9$,
%
\begin{equation} \label{e:DW ptwise}
    \dt^l \Dalpha W =
    \begin{cases}
        O ((1+y)^{-\sigma-\alpha_2}) & \text{ if } |\alpha| + 2l \le 2\\
        O ((1+y)^{-\frac{\sigma+(2^{|\alpha|+2l-2}-1)\gamma}{2^{|\alpha|+2l-2}}
        - \alpha_2} ) & \text{ if } 2 \le |\alpha| + 2l \le s+9,
    \end{cases}
\end{equation}
and
energy estimate:
%
\begin{equation} \label{e:W est}
\begin{split}
    & \; \dfrac{d}{dt} \|| W |\|^2_{s+8,\gamma} + \e^2 \|| \dx W |\|^2_{s+8,\gamma}
    + \||\dy W |\|^2_{s+8,\gamma} \\
    \le & \; C_{s,\gamma} \||W|\|^2_{s+8,\gamma} + C_s \||W|\|_{s+8,\gamma} \||F_R|\|_{s+8,\gamma}
    + C_s \|| F_R |\|^2_{s+7,\gamma} + C_s \|| \dx \pep|\|^2_{s+8},
\end{split}
\end{equation}
where the norms $\||\cdot|\|_{s',\gamma}$ and $\|| \cdot |\|_{s'}$ are defined in
definition \ref{defn:weighted norms}.
\end{prop}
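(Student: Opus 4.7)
The plan is to construct the solution to \eqref{e:inhomo heat} by reducing to a homogeneous Neumann problem and then invoking standard parabolic theory. First I would lift the boundary data: pick a fixed cutoff $\chi \in C^\infty_c([0,\infty))$ with $\chi(0)=1$ and set $\Phi(t,x,y) := -y\chi(y)\dx\pep(t,x)$, so that $\dy\Phi|_{y=0} = -\dx\pep$ and $\Phi$ is compactly supported in $y$. The new unknown $V := W + \Phi$ then satisfies a heat equation on $\TR$ with homogeneous Neumann boundary condition, modified initial data $V_0 := W_0 + \Phi|_{t=0}$, and inhomogeneous term $-F_R + (\dt - \e^2\dxx - \dyy)\Phi$. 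Extending $V$ to $\T \times \R$ by even reflection in $y$ produces a uniformly parabolic problem on $\T\times\R$ with data in a weighted Sobolev space, and the classical heat-kernel construction (or a Galerkin scheme) yields a local-in-time solution that is $C^\infty$ for $t>0$ by parabolic smoothing and which inherits weighted $H^{s+8,\gamma}$ regularity from $V_0$.

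Next I would derive the energy estimate \eqref{e:W est} directly on the half-space formulation. Applying $\Dalpha$ to $\eqref{e:inhomo heat}_1$ for $|\alpha|\le s+8$, multiplying by $(1+y)^{2\gamma+2\alpha_2}\Dalpha W$, and integrating over $\TR$ produces the dissipation $-\e^2\||\dx W|\|_{s+8,\gamma}^2 - \||\dy W|\|_{s+8,\gamma}^2$ after integration by parts, together with a weight commutator bounded by $C_{s,\gamma}\||W|\|_{s+8,\gamma}^2$, a Cauchy--Schwarz contribution $C_s\||W|\|_{s+8,\gamma}\||F_R|\|_{s+8,\gamma}$ from the inhomogeneous term, and a boundary integral $-\int \Dalpha W\,\dy\Dalpha W\,dx|_{y=0}$. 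The boundary term is handled exactly as in the proof of proposition \ref{p:L2 Dwep}: for $|\alpha|\le s+7$ one applies the trace inequality \eqref{e:trace est}, while for $|\alpha|=s+8$ one uses the parity argument from lemma \ref{l:reduce bdry} adapted to the heat operator to reduce odd-order $y$-derivatives of $W$ at $y=0$ to $\dt^k \dx\pep$ plus lower-order spatial derivatives of $W$ and $F_R$, producing the $C_s\||\dx\pep|\|_{s+8}^2$ contribution and, because each pass of the equation converts $\dyy W$ into $\dt W - \e^2\dxx W + F_R$, the additional $C_s\||F_R|\|_{s+7,\gamma}^2$ piece.

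To preserve the pointwise cone constraints $(1+y)^\sigma W \ge \delta$ and $|(1+y)^{\sigma+\alpha_2}\Dalpha W|\le \delta^{-2}$ for $|\alpha|\le 2$ defining $H^{s+8,\gamma}_{\sigma,\delta}$, I would apply the classical parabolic maximum principle to the quantities $B_\alpha := (1+y)^{\sigma+\alpha_2}\Dalpha W$ exactly as in subsection \ref{ss:Linfty lower}; the absence of transport terms $\uep\dx+\vep\dy$ simplifies the argument, and the $F_R$ contributions are controlled through $\|F_R\|_{C^2}$ thanks to its compact support. Since $W_0$ lies in the strictly smaller set $H^{s+12,\gamma}_{\sigma,2\delta}$, a short-time continuation argument yields a time $T$ depending on $s,\gamma,\sigma,\delta,R,\|W_0\|_{H^{s+8,\gamma}},\|F_R\|_{C^2}$ on which the weaker bounds persist. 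The decay estimate \eqref{e:DW ptwise} is then proved inductively on $n := |\alpha|+2l$. The base case $n\le 2$ follows directly from $|B_\alpha|\le \delta^{-2}$. For $n\ge 3$ one uses the equation to trade each $\dt$ for two spatial derivatives modulo $F_R$ and applies the one-dimensional interpolation $\|(1+y)^p f\|_{L^\infty_y}^2 \lesssim \|(1+y)^{p-\frac12}f\|_{L^2_y}\|(1+y)^{p-\frac12}\dy f\|_{L^2_y}$ to $f = \Dalpha W$; each iteration halves the gap between the available $H^{s+8,\gamma}$ weight $\gamma$ and the target $\sigma$, generating the geometric exponent $\frac{\sigma+(2^{n-2}-1)\gamma}{2^{n-2}}$ in \eqref{e:DW ptwise}.

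The main obstacle is the high-order boundary reduction in the energy estimate. Although the heat operator is simpler than the full Prandtl vorticity operator, the top-order boundary contribution $\int \Dalpha W\,\dy\Dalpha W\,dx|_{y=0}$ at $|\alpha|=s+8$ still requires the even/odd parity argument of lemma \ref{l:reduce bdry} applied to the constant-coefficient operator $\dt - \e^2\dxx - \dyy + F_R$; verifying that this reduction produces only $\||F_R|\|_{s+7,\gamma}^2$ rather than $\||F_R|\|_{s+8,\gamma}^2$, and exactly $\||\dx\pep|\|_{s+8}^2$, is the most delicate bookkeeping step, because it is this sharp counting that allows \eqref{e:W est} to be closed by Gronwall in the continuous-induction construction of the solution.
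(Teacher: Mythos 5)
Your proposal follows essentially the same route as the paper's own outline: construct the solution by reflection and the heat kernel (you homogenize the Neumann data first, the paper keeps it as a Duhamel boundary term $K_{\dx\pep}$, which is equivalent), derive \eqref{e:W est} by standard energy methods with the boundary integral at $y=0$ reduced via the parity formula $\dy^{2k+1}W|_{y=0}=(\dt-\e^2\dxx)^k\dx\pep+\sum_{j}(\dt-\e^2\dxx)^{k-j-1}\dy^{2j+1}F_R|_{y=0}$, and obtain the decay \eqref{e:DW ptwise} by the same gap-halving interpolation that underlies proposition \ref{p:decay Hs+3}. The only cosmetic differences are that you preserve the cone constraints of $H^{s+8,\gamma}_{\sigma,\delta}$ by the maximum principle rather than by reading them off the explicit kernel representation, and you interpolate on $W(t)$ directly rather than propagating the decay of $W_0\in H^{s+12,\gamma}_{\sigma,2\delta}$ through the Gaussian; both variants are standard and consistent with the paper's argument.
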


\begin{proof}[Outline of the Proof]
Using the method of reflection and Duhamel's principle, one may express the unique
global-in-time $C([0,T] \times \TR) \cap C^\infty ((0,T] \times \TR)$ solution
to \eqref{e:inhomo heat} by an explicit solution formula
%
\begin{equation} \label{e:heat sol}
    W = K_{W_0} + K_{\dx \pep} + K_{F_R}
\end{equation}
where the terms $K_{W_0}, K_{\dx \pep}$ and $K_{F_R}$ can be written explicitly
by using the Gaussian (i.e., the heat kernel), and depend on $W_0, \dx \pep$ and
$F_R$ respectively. Since the solution formula \eqref{e:heat sol} is explicit,
based on the properties of the Gaussian, one may prove the following two facts:
as $y \to + \infty$,
\begin{itemize}
\item[(i)] both $K_{\dx\pep}$ and $K_{F_R}$ decay exponentially fast;
\item[(ii)] the term $\Dalpha K_{W_0} \lesssim ($or $\gtrsim) (1+y)^{-b_\alpha}$
provided that $\Dalpha W_0$ does.
\end{itemize}
Using quantitative versions of facts (i) and (ii), one can justify by
\eqref{e:heat sol} that $W$ fulfills all weighted $L^\infty$ controls for
$H^{s+8,\gamma}_{\sigma,\delta}$ within a short time interval
$[0,T_{s,\gamma,\sigma,\delta, R, \|W_0\|_{H^{s,\gamma}}, \|F_R\|_{C^2}}]$.

Furthermore, applying proposition \ref{p:decay Hs+3} with $s' = s+8$, we know that
$W_0$ satisfies the pointwise decay \eqref{e:DW ptwise} for $l=0$ and $|\alpha|
\le s+9$, and hence, $W$ does. Using the heat equation $\eqref{e:inhomo heat}_1$ repeatedly,
we also obtain the pointwise decay \eqref{e:DW ptwise} in our desired ranges of
$l$ and $\alpha$.

Finally, it remains to show the energy estimate \eqref{e:W est}, but its proof
just follows from standard energy methods, so we will omit the proof here.
However, during the estimation, one requires to apply an integration by parts
in the $y$-direction to deal with the operator $\dyy$, so we would like to give
the following two remarks on the boundary values of $W$:
\begin{itemize}
\item[(I)] (Boundary Values at $y=0$) The boundary values of $W$ as well as
its derivatives can be reconstructed by using the boundary reduction
formula
\[
    \dy^{2k+1} W|_{y=0} = (\dt - \e^2\dxx)^k \dx\pep + \sum^{k-1}_{j=0}
    (\dt - \e^2 \dxx)^{k-j-1} \dy^{2j+1} F_R |_{y=0},
\]
which reduces the order of the boundary terms so that we can control the
boundary integral at $y=0$ via the simple trace estimate \eqref{e:trace est};
\item[(II)] (Boundary Values at $y = +\infty$) All boundary
terms of $W$ as well as its derivatives required for deriving
energy estimate \eqref{e:W est} actually vanish fast enough at $y=+\infty$
because of the pointwise decay estimate \eqref{e:DW ptwise}. Thus, all required
boundary integrals at $y = +\infty$ are zero.

\end{itemize}

\end{proof}

%
%
%
\subsection{Solvability of Linearized, Truncated and Regularized Vorticity System}
\label{ss:solve LTRPE}
Based on the $\Hs$ solutions to the inhomogeneous heat equation \eqref{e:inhomo heat}
derived in subsection \ref{ss:inhomo heat}, we will construct a sequence of solutions
to the linearized, truncated and regularized vorticity system \eqref{e:LTRPE} -
\eqref{e:LTRuv} with uniform bounds in this subsection. This sequence of solutions as
well as their uniform bounds will be the foundation for solving the truncated and
regularized vorticity system \eqref{e:TRPE} - \eqref{e:TRuv}  in the next subsection.

Let us begin by defining an iterative sequence $\{(u^n, v^n, \w^n)\}_{n \in \N}$
as follows:
\begin{itemize}
    \item[(i)] $\w^0 (t,x,y) := \w_0 (x,y)$;
    \item[(ii)] $(u^n, v^n)$ is defined by formulae \eqref{e:LTRuv} for all $n \in \N;$
    \item[(iii)] $\w^{n+1}$ is defined to be the $C([0,T]; H^{s+8,\gamma}_{\sigma,\delta})
    \cap C^\infty ((0,T] \times \TR)$ solution to the linearized, truncated and regularized
    vorticity system \eqref{e:LTRPE} - \eqref{e:LTRuv} for all $n \in N$.
\end{itemize}

The natural question is whether the iterative sequence $\{(u^n, v^n, \w^n)\}_{n \in \N}$
is well-defined, and the answer is affirmative because of the following
%
\begin{prop}[Existence of Linearized, Truncated and Regularized Vorticity System]\label{p:Solve LTRPE}
Let $s \ge 4$ be an even integer, $\gamma \ge 1, \sigma > \gamma + \frac{1}{2}, \delta \in
(0,\frac{1}{2}), \e \in (0,1]$ and $R \ge 1$. If $\w_0 \in H^{s+12,\gamma}_{\sigma, 2\delta}$
and $\sup_t \||U|\|_{s+9, \infty} < + \infty$ where $\||\cdot|\|_{s',\infty}$ is defined
in definition \ref{defn:weighted norms}, then there exist a uniform life-span
$T := T(s,\gamma,\sigma,\delta, \e,\chi, R, \|\w_0\|_{H^{s+8,\gamma}},
\sup_t \||U|\|_{s+9,\infty}) > 0$ which is independent of $n$, and a sequence
of solutions $\{\w^n\}_{n\in \N} \subseteq C([0,T];
H^{s+8,\gamma}_{\sigma,\delta}) \cap C^\infty ((0,T] \times \TR)$ to the linearized, truncated
and regularized vorticity system \eqref{e:LTRPE} - \eqref{e:LTRuv}.

Furthermore, the pointwise decay estimate \eqref{e:DW ptwise} holds for $W := \w^n$ for
all $n \in \N$ and we have the following uniform (in $n$) energy estimate: for all
$n \in \N$ and for all $t \in [0,T]$,
%
\begin{equation} \label{e:wn est}
    \|| \w^n|\|^2_{s+8,\gamma} + \e^2 \int^t_0 \||\dx\w^n|\|^2_{s+8,\gamma} +
    \int^t_0 \|| \dy \w^n|\|^2_{s+8,\gamma} \le \mathcal{Q}_{s+10}
    (\|\w_0\|_{H^{s+8},\gamma})
\end{equation}
where the norm $\||\cdot|\|_{s',\gamma}$ is defined in definition \ref{defn:weighted norms}
and $\mathcal{Q}_l$ is a degree $l$ polynomial with non-negative coefficients which depends
on $C_{s,\gamma,\chi}$ and $\||U|\|_{s+8,\infty}$ only.
\end{prop}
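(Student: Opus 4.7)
The plan is to construct the sequence $\{\w^n\}$ by induction on $n$, with each $\w^{n+1}$ obtained as the solution of an inhomogeneous heat equation of the type treated in proposition \ref{p:inhomo heat}. The base case is settled by taking $\w^0(t,x,y) := \w_0(x,y)$, which lies in $C([0,T]; H^{s+12,\gamma}_{\sigma, 2\delta})$ and has the required pointwise decay by proposition \ref{p:decay Hs+3}. For the inductive step, suppose $\w^n \in C([0,T_n]; H^{s+8,\gamma}_{\sigma, \delta}) \cap C^\infty((0, T_n] \times \TR)$ with the decay \eqref{e:DW ptwise}. Then $(u^n, v^n)$ defined by \eqref{e:LTRuv} inherits smoothness and appropriate pointwise bounds (using $u^n - U = -\int_y^\infty \w^n$ together with proposition \ref{p:L2Linfty uvwgk}), and the source term
\[
    F_R^n := \chi_R \{u^n \dx \w^n + v^n \dy \w^n\}
\]
is supported in $[0,T_n]\times \T \times [0, 2R]$ thanks to the compact support of $\chi_R$. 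Applying proposition \ref{p:inhomo heat} with $W_0 := \w_0$, $F_R := F_R^n$, we obtain $\w^{n+1} \in C([0,T_{n+1}]; H^{s+8,\gamma}_{\sigma,\delta}) \cap C^\infty((0, T_{n+1}] \times \TR)$ satisfying the pointwise decay \eqref{e:DW ptwise} and the energy inequality \eqref{e:W est}.

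To make the life-span $T$ uniform in $n$, I would set up a bootstrap: fix
\[
    M := 2\,\mathcal{Q}_{s+10}(\|\w_0\|_{H^{s+8,\gamma}})^{1/2}
\]
and argue by induction that $\||\w^n|\|^2_{s+8,\gamma}(t) \le M^2$ for all $t \in [0, T]$ and a choice of $T$ independent of $n$. The key point is the product estimates for $F_R^n$: because $\chi_R$ localizes to $y \le 2R$ and $u^n = U - \int_y^{+\infty} \w^n$, one obtains
\[
    \||F_R^n|\|^2_{s+8,\gamma} + \|F_R^n\|^2_{C^2}
    \le C_{s,\gamma,\chi,R}\bigl(\||U|\|_{s+8,\infty}^2 + \||\w^n|\|^2_{s+8,\gamma}\bigr)
    \bigl(1 + \||\w^n|\|^2_{s+9,\gamma}\bigr),
\]
where the extra $x$-derivative on $\w^n$ inside $\dx\w^n$ and $\dy\w^n$ is no obstacle because our space carries $s+8$ derivatives and these terms are hit by compactly supported weights so only $L^2$-type norms with the weight $\chi_R$ are needed. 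Inserting this into \eqref{e:W est}, applying Cauchy's inequality to absorb the $\||\dx \w^{n+1}|\|_{s+8,\gamma}$ and $\||\dy\w^{n+1}|\|_{s+8,\gamma}$ terms (noting that because the source carries one $x$ or $y$ derivative on $\w^n$, one picks up the dissipative terms), and using the inductive hypothesis on $\w^n$, yields a differential inequality of the form
\[
    \dfrac{d}{dt}\||\w^{n+1}|\|^2_{s+8,\gamma} \le C_{s,\gamma,\chi,\e, R,U}\bigl(1+M^2\bigr)\bigl(1+\||\w^{n+1}|\|^2_{s+8,\gamma}\bigr).
\]
Choosing $T$ small enough, depending on $M$ but not on $n$, keeps $\||\w^{n+1}|\|^2_{s+8,\gamma}(t) \le M^2$ on $[0,T]$, closing the bootstrap and simultaneously giving the uniform estimate \eqref{e:wn est} (with $\mathcal{Q}_{s+10}$ playing the role of $M^2$ after reabsorbing constants).

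The main obstacle will be preserving the weighted $L^\infty$ constraints defining membership in $H^{s+8,\gamma}_{\sigma,\delta}$, namely $(1+y)^\sigma \w^{n+1} \ge \delta$ and $\sum_{|\alpha|\le 2}|(1+y)^{\sigma + \alpha_2} \Dalpha \w^{n+1}|^2 \le \delta^{-2}$, through the induction. Since $\w_0 \in H^{s+12,\gamma}_{\sigma, 2\delta}$ provides a factor-of-two margin, one can recover these by an argument similar to the one used in part (ii) of proposition \ref{p:Uniform RPE}: viewing the evolution of $(1+y)^{\sigma+\alpha_2} \Dalpha \w^{n+1}$ as a linear parabolic equation with lower-order coefficients controlled by $\w^n$, applying the classical maximum principle (lemmas \ref{l:max parabolic} and \ref{l:min parabolic}) yields, for $T$ possibly smaller but still independent of $n$, the preservation of both the $\delta^{-2}$ upper bound and the $\delta$ lower bound. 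Combining this with the compact support of $F_R^n$ (which prevents the source from polluting the decay at $y = +\infty$) and the pointwise decay \eqref{e:DW ptwise} propagated from $W_0 = \w_0$ through proposition \ref{p:inhomo heat} closes the induction. Taking $T$ as the minimum of the three controlling times yields the claimed uniform life-span.
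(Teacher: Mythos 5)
Your overall architecture --- iterating the inhomogeneous heat solver of proposition \ref{p:inhomo heat} with source $F_R^n := \chi_R\{u^n\dx\w^n + v^n\dy\w^n\}$ and closing a uniform-in-$n$ bootstrap --- is the same as the paper's, but two steps have genuine gaps. First, your own product estimate for $F_R^n$ contains the norm $\||\w^n|\|_{s+9,\gamma}$, which is \emph{not} controlled by the bootstrap hypothesis $\||\w^n|\|_{s+8,\gamma}\le M$; localizing in $y$ by $\chi_R$ does not recover the lost derivative. Consequently the pointwise-in-time differential inequality you write for $\||\w^{n+1}|\|^2_{s+8,\gamma}$ does not close. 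The paper's resolution (estimate \eqref{e:FR 78}) is to bound the top-order norm $\||F_R|\|_{s+8,\gamma}$ by $\{\||\w^n|\|_{s+8,\gamma}+\||U|\|_{s+9,\infty}\}\cdot\{\||\dx\w^n|\|_{s+8,\gamma}+\||\dy\w^n|\|_{s+8,\gamma}\}$ and to exploit that \eqref{e:W est} is \emph{linear} in $\||F_R|\|_{s+8,\gamma}$: after integrating in time and using Cauchy--Schwarz, the extra derivative is paid for by the time-integrated dissipation $\e^2\int_0^t\||\dx\w^n|\|^2_{s+8,\gamma}+\int_0^t\||\dy\w^n|\|^2_{s+8,\gamma}$ of the \emph{previous} iterate, which is part of the inductive bound \eqref{e:wn le L}. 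Your bootstrap must therefore carry these dissipation integrals, not just the sup-in-time norm, and the closing argument is integral rather than differential.

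Second, you never control the initial value $\||\w^n|\|_{s+8,\gamma}|_{t=0}$ uniformly in $n$. Since the norm $\||\cdot|\|_{s+8,\gamma}$ contains time derivatives $\dt^l$ up to $l=\frac{s}{2}+4$, its value at $t=0$ involves $\dt^l\w^n|_{t=0}$, which is computed recursively from the equation and hence depends on $\w^{n-1},\w^{n-2},\dots$. This is where the polynomial $\mathcal{Q}_{s+10}$ and its degree actually come from: the paper proves $\|\dt^l\w^n\|_{H^{s-2l+8,\gamma}}|_{t=0}\le\mathcal{P}_{l+1}(\|\w_0\|_{H^{s+8,\gamma}})$ by a double induction on $(n,l)$ (estimates \eqref{e:initial wn}--\eqref{e:Y n+1}) and then sets $\mathcal{Q}_{s+10}:=4\sum_l\mathcal{P}_{l+1}^2$. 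Defining your bootstrap constant $M$ in terms of $\mathcal{Q}_{s+10}$ before establishing this recursion makes the argument circular. (Your maximum-principle step for preserving the pointwise constraints defining $H^{s+8,\gamma}_{\sigma,\delta}$ is a legitimate alternative, but it is already subsumed in the conclusion of proposition \ref{p:inhomo heat}.)
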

\begin{proof}[Outline of the proof]
For a given $\w^n \in C([0,T]; H^{s+8,\gamma}_{\sigma,\delta}) \cap
C^\infty ((0,T] \times \TR)$, the local in time solvability of $\w^{n+1}$ in the same
function space follows directly by applying proposition \ref{p:inhomo heat} with
$W := \w^{n+1}$ and $F_R := \chi_R \{u^n \dx \w^n + v^n \dy \w^n\}$, although the life-span
$T$ may depend on $n$ a priori. However, the unform (in $n$) energy estimate
\eqref{e:wn est} guarantee the unform (in $n$) life-span $T$ by the standard continuous
induction argument, so it suffices to prove \eqref{e:wn est}.

In order to derive the energy estimate \eqref{e:wn est}, we have to control
$\||F_R|\|_{s+7,\gamma}$ and $\||F_R|\|_{s+8,\gamma}$ for $F_R := \chi_R
\{u^n \dx\w^n + v^n \dy \w^n\}$. Using the triangle inequality, proposition
\ref{p:basic ineq weighted} and proposition \ref{p:cntl uv}, one may check that
%
\begin{equation} \label{e:FR 78}
\left\{\begin{aligned}
    \||F_R|\|_{s+7,\gamma} & \le C_{s,\gamma,\chi} \||\w^n|\|^2_{s+8,\gamma}
    + \||U|\|^2_{s+8,\infty}\\
    \||F_R|\|_{s+8,\gamma} & \le C_{s,\gamma,\chi,\R} \{ \||\w^n|\|_{s+8,\gamma}
    +\||U|\|_{s+9,\infty}\} \cdot \{\||\dx\w^n|\|_{s+8,\gamma} + \||\dy \w^n|\|_{s+8,\gamma}\}
\end{aligned}\right.
\end{equation}
where the norms $\||\cdot|\|_{s',\infty,\gamma}$ and $\||\cdot|\|_{s',\infty}$ are defined in
definition \ref{defn:weighted norms}.

Applying inequalities \eqref{e:W est} and \eqref{e:FR 78}, one can easily
show that as long as $\||\w^n|\|_{s+8,\gamma} |_{t=0} \le L$, there exists a uniform
(in $n$) time interval $[0,T_{s,\gamma,\sigma, \delta,\e,\chi,R,\sup_t \||U|\|_{s+9,\infty},
L}]$ such that
%
\begin{equation}  \label{e:wn le L}
    \||\w^n|\|^2_{s+8,\gamma} + \e^2 \int^t_0 \||\dx\w^n|\|^2_{s+8,\gamma}
    + \int^t_0 \||\dy \w^n |\|^2_{s+8,\gamma} \le 4 L^2 \qquad \qquad \text{ for all }
    n \in \N
\end{equation}
because both \eqref{e:W est} and \eqref{e:FR 78} are independent of $n$. Therefore, it
remains to derive a uniform (in $n$) control on the initial data
$\||\w^n|\|_{s+8,\gamma}|_{t=0}$.

To estimate $\||\w^n|\|_{s+8,\gamma}|_{t=0}$, let us first state without proof the
following fact:
%
\begin{equation} \label{e:initial wn}
    \|\dt^l \w^n\|_{H^{s-2l+8,\gamma}} |_{t=0} \le \mathcal{P}_{l+1}
    (\|\w_0\|_{H^{s+8,\gamma}})
\end{equation}
where $\mathcal{P}_{l+1}$ is a degree $l+1$ polynomial defined by
\[
    \mathcal{P}_1 (Z) := Z \qquad \text{ and } \qquad
    \mathcal{P}_{l+1} := \mathcal{P}_l + C_{s,\gamma,\chi} \sum^{l-1}_{j=0}
    (\mathcal{P}_{j+1} + \||U|\|_{s+8,\infty}) \mathcal{P}_{l-j} \qquad
    \text{ for all } l \ge 1.
\]

The fact \eqref{e:initial wn} can be proved by induction on $(n,l)$ together with the
following estimate:
%
\begin{equation} \label{e:Y n+1}
    Y_{n+1, l+1} \le Y_{n+1,l} + C_{s,\gamma,\chi} \sum^l_{j=0}
    (Y_{n,j} + \|| U|\|_{s+8,\infty} ) Y_{n, l-j}
\end{equation}
where $Y_{n,l} := \|\dt^l \w^n\|_{H^{s-2l+8, \gamma}} |_{t=0}$. The derivation
of \eqref{e:Y n+1}, which is based on $\eqref{e:LTRPE}_1$, proposition \ref{p:basic ineq weighted} and proposition
\ref{p:cntl uv}, will be left for the reader.

Combining estimates \eqref{e:wn le L} and \eqref{e:initial wn}, we show the
uniform energy estimate \eqref{e:wn est} for $\mathcal{Q}_{s+10} :=
4 \sum^{\frac{s}{2}+4}_{l=0} \mathcal{P}_{l+1}^2$.

\end{proof}

%
%
\subsection{Solvability of Truncated and Regularized Vorticity System} \label{ss:solve TRPE}
The aim of this subsection is to construct a solution to the truncated and
regularized vorticity system \eqref{e:TRPE} - \eqref{e:TRuv} by passing to
the limit in its linearized version \eqref{e:LTRPE} - \eqref{e:LTRuv},
which was solved with uniform bounds in subsection \ref{ss:solve LTRPE}.

In other words, we will prove the following
%
%
\begin{prop} [Existence of Truncated and Regularized Vorticity System] \label{p:solve TRPE}
Let $s \ge 4$ be an even integer, $\gamma \ge 1, \sigma > \gamma + \frac{1}{2},
\delta \in (0,\frac{1}{2}), \e\in (0,1]$ and $R \ge 1$. If $\w_0 \in
H^{s+12,\gamma}_{\sigma, 2\delta}$ and $\sup_t \||U|\|_{s+9,\infty} < + \infty$
where $\||\cdot|\|_{s',\infty}$ is defined in definition \ref{defn:weighted norms},
then there exist a time $T := T(s,\gamma,\sigma, \delta,\e,\chi,R,
\|\w_0\|_{H^{s+8,\gamma}}, \sup_t \||U|\|_{s+9,\infty}) > 0$ and a solution
$\w_R \in C([0,T]; H^{s+8,\gamma}_{\sigma,\delta}) \cap \bigcap^{\frac{s}{2}+4}_{l=1}
C^l ([0,T]; H^{s-2l+8, \gamma})$ to the truncated and regularized vorticity system
\eqref{e:TRPE} - \eqref{e:TRuv}.

Furthermore, we have the following uniform (in $R$) weighted energy estimate:
%
%
\begin{equation}  \label{e:wR est}
\begin{split}
    & \; \dfrac{d}{dt} \||\w_R|\|^2_{s+4,\gamma} + \e^2 \|| \dx \w_R|\|^2_{s+4,\gamma}
    + \|| \dy \w_R |\|^2_{s+4,\gamma} \\
    \le & \; C_{s,\gamma,\e,\chi} \{1+\||\w_R|\|_{s+4,\gamma} +\||U|\|_{s+6} \}^2
    \|| \w_R|\|^2_{s+4,\gamma} + C_s \{1+ \||U|\|_{s+6} \}^2 \||U|\|^2_{s+6}
\end{split}
\end{equation}
and the following weighted $L^\infty$ estimates:
%
%
\begin{equation} \label{e:IR Linfty}
    \|I_R (t)\|_{L^\infty (\TR)} \le
    ( \|I_R (0)\|_{L^\infty (\TR)} + C_{s,\gamma} \Lambda  (t) \sup_{[0,t]}
    \|\w_R\|_{H^{s+4,\gamma}}t) e^{C_\sigma \Lambda (t) t}
\end{equation}
%
%
\begin{equation} \label{e:min wR}
    \min_{\TR} (1+y)^\sigma \w_R (t)
    \ge (1-\Lambda (t) t e^{\Lambda(t)t}) (\min_{\TR} (1+y)^\sigma \w_0
    -C_\sigma \Lambda (t) \sup_{[0,t]} \|\w_R\|_{H^{s+4,\gamma}} t ),
\end{equation}
where the norms $\||\cdot|\|_{s',\gamma}$ and $\||\cdot|\|_{s'}$ are defined in definition \ref{defn:weighted norms}, and the quantities $I_R$ and $\Lambda$ are defined by
\[
    I_R (t) := \sum_{|\alpha| \le 2} |(1+y)^{\sigma + \alpha_2} \Dalpha \w_R (t)|^2
    \quad \text{ and } \quad \Lambda (t) := 1 + \sup_{[0,t]} \|\w_R\|_{H^{s+4,\gamma}} +
    \sup_{[0,t]} \|U\|_{C^3 (\T)}.
\]

\end{prop}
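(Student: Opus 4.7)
The plan is to construct $\w_R$ as the limit of the iterative sequence $\{\w^n\}_{n\in\N}$ produced by proposition \ref{p:Solve LTRPE}, and then verify the uniform (in $R$) estimates via careful energy and maximum principle arguments applied to the full nonlinear system \eqref{e:TRPE}--\eqref{e:TRuv}. First I would set $z^{n+1} := \w^{n+1} - \w^n$ and subtract two consecutive equations from $\eqref{e:LTRPE}_1$ to obtain a linear inhomogeneous heat equation for $z^{n+1}$ with source involving $\chi_R$ times $u^n \dx z^n$, $(u^n - u^{n-1}) \dx \w^{n-1}$, $v^n \dy z^n$ and $(v^n - v^{n-1}) \dy \w^{n-1}$, with zero initial datum and homogeneous Neumann boundary condition at $y=0$. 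A plain $L^2$ energy estimate exploiting the compact support of $\chi_R$, together with the uniform bound \eqref{e:wn est} and the Hardy-type controls of proposition \ref{p:L2Linfty uvwgk}, should produce a recursive inequality of the form $\tfrac{d}{dt}\|z^{n+1}\|_{L^2}^2 \le C_R (\|z^{n+1}\|_{L^2}^2 + \|z^n\|_{L^2}^2)$. Standard induction and Gronwall then show that $\{\w^n\}_{n\in\N}$ is Cauchy in $C([0,T]; L^2(\TR))$.

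Interpolating the strong $L^2$ convergence against the uniform bound \eqref{e:wn est} gives strong convergence $\w^n \to \w_R$ in $C([0,T]; H^{s+8-\eta,\gamma})$ for every $\eta>0$, hence pointwise convergence of all terms in \eqref{e:LTRPE}--\eqref{e:LTRuv}, so that $\w_R$ solves \eqref{e:TRPE}--\eqref{e:TRuv} in the classical sense. Weak-$*$ lower semicontinuity preserves the weighted bounds defining $H^{s+8,\gamma}_{\sigma,\delta}$, and the time regularity $\w_R \in \bigcap_{l=1}^{s/2+4} C^l([0,T]; H^{s-2l+8,\gamma})$ is obtained by differentiating $\eqref{e:TRPE}_1$ in $t$ and bootstrapping using proposition \ref{p:cntl uv}.

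With $\w_R$ constructed, I would derive the uniform-in-$R$ weighted $H^{s+4,\gamma}$ estimate \eqref{e:wR est} via the standard energy method: apply $\Dalpha$ to $\eqref{e:TRPE}_1$ for $|\alpha| \le s+4$, multiply by $(1+y)^{2\gamma+2\alpha_2}\Dalpha \w_R$, integrate over $\TR$, and treat boundary integrals at $y=0$ by means of the boundary reconstruction philosophy of lemma \ref{l:reduce bdry} (adapted to the truncated system, where the cutoff $\chi_R$ equals $1$ near $y=0$). The loss of one $x$-derivative in $\dx^{s+5}$-type convection terms is absorbed using the regularizing $\e^2$-viscosity, which explains the $\e$-dependence of the constant. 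The $L^\infty$ estimates \eqref{e:IR Linfty} and \eqref{e:min wR} then follow by the classical parabolic maximum principle applied to the equation satisfied by $(1+y)^{\sigma+\alpha_2}\Dalpha \w_R$ for $|\alpha|\le 2$, exactly as in step 1 of the proof of part (ii) of proposition \ref{p:Uniform RPE}; the transport coefficients $\chi_R u_R$ and $\chi_R v_R/(1+y)$ are bounded uniformly in $R$ by $\|\w_R\|_{H^{s+4,\gamma}}$ and $\|U\|_{C^3(\T)}$ thanks to proposition \ref{p:L2Linfty uvwgk}.

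The principal obstacle is the breakdown of the usual divergence-free cancellation inside the weighted $H^{s+4}$ energy estimate, because the transport operator $\chi_R(u_R \dx + v_R \dy)$ is not divergence-free once the cutoff $\chi_R$ is inserted. Integrating by parts in the principal term $\iint (1+y)^{2\gamma+2\alpha_2}\Dalpha \w_R \cdot \chi_R (u_R \dx \Dalpha \w_R + v_R \dy \Dalpha \w_R)$ leaves behind a commutator term proportional to $\tfrac{1}{R}\iint \chi_R' v_R |\Dalpha \w_R|^2$ together with a weight commutator $\iint (1+y)^{2\gamma+2\alpha_2-1} \chi_R v_R |\Dalpha \w_R|^2$. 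The uniform bound $|\chi_R'|/R \le 2$ on $\operatorname{supp} \chi_R'$, together with $\|v_R/(1+y)\|_{L^\infty} \lesssim \|\w_R\|_{H^{s+4,\gamma}} + \|U\|_{C^1}$, ensures that both of these contributions are absorbed into $C_{s,\gamma,\e,\chi}\{1+\||\w_R|\|_{s+4,\gamma}+\||U|\|_{s+6}\}^2 \||\w_R|\|^2_{s+4,\gamma}$ independently of $R$, giving \eqref{e:wR est}.
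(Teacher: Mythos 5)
Your proposal follows essentially the same route as the paper's (itself only an outline): build $\w_R$ as the limit of the iterates from proposition \ref{p:Solve LTRPE} via a Cauchy argument, then prove \eqref{e:wR est} by the standard energy method and \eqref{e:IR Linfty}, \eqref{e:min wR} by the maximum principle. Your discussion of why \eqref{e:wR est} is uniform in $R$ --- the loss of the divergence-free cancellation through $\chi_R$, the commutator $\chi_R' v_R$, and the bound $(1+y)|\chi_R'|\le C$ on $\operatorname{supp}\chi_R'$ obtained from $|\chi_R'|\le 2/R$ and $1+y\le 1+2R$ there --- is exactly the point the paper flags but does not carry out, so this part is a genuine contribution. (Note the inequality should read $R\,|\chi_R'|\le 2$, not $|\chi_R'|/R\le 2$.) The one substantive divergence is the contraction norm: the paper makes the iterates Cauchy in $\sup_t\||\cdot|\|_{s+6,\gamma}$, interpolating the top-order commutators against the uniform $\||\cdot|\|_{s+8,\gamma}$ bound, whereas you contract in $C([0,T];L^2)$ and recover regularity of the limit afterwards by interpolation and weak-$*$ lower semicontinuity. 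Both work, but your stated recursive inequality $\tfrac{d}{dt}\|z^{n+1}\|_{L^2}^2\le C_R(\|z^{n+1}\|_{L^2}^2+\|z^n\|_{L^2}^2)$ is not attainable as written: the source contains $\chi_R\{u^{n-1}\dx z^n+v^{n-1}\dy z^n\}$, and $\|\dx z^n\|_{L^2},\|\dy z^n\|_{L^2}$ are not controlled by $\|z^n\|_{L^2}$. The standard repair is to contract the quantity $E_n(t):=\sup_{[0,t]}\|z^n\|^2_{L^2}+\int_0^t(\e^2\|\dx z^n\|^2_{L^2}+\|\dy z^n\|^2_{L^2})$, whose dissipation part controls the offending terms of the previous step at the price of an $\e$-dependent constant.

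Two further points need tightening. First, the truncation destroys the Dirichlet condition $u_R|_{y=0}=0$ (the paper states this explicitly after \eqref{e:chi properties}), so lemma \ref{l:reduce bdry} and the boundary evolution identities of subsection \ref{ss:Linfty lower} do not transfer verbatim: the reduction formulas acquire extra terms proportional to $u_R|_{y=0}=U-\int_0^{+\infty}\w_R\,dy$ and its $x$- and $t$-derivatives. These are of lower order and bounded by $\|U\|_{C^k}$ and $\|\w_R\|_{H^{k+1,\gamma}}$, so the argument survives, but the adaptation is more than ``$\chi_R\equiv 1$ near $y=0$.'' Second, the norm $\||\cdot|\|_{s+4,\gamma}$ in \eqref{e:wR est} includes time derivatives, so the energy estimate must be run on $\dt^l\Dalpha\w_R$ for all $2l+|\alpha|\le s+4$, not only on spatial derivatives; the same commutator analysis applies, with $\dt^l$ falling on $u_R$, $v_R$ and $U$ as well. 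With these repairs the proposal is sound.
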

\begin{proof} [Outline of the proof]
According to proposition \ref{p:Solve LTRPE}, the sequence of solutions $\{\w^n\}_{n\in\N}$
to the linearized, truncated and regularized vorticity system \eqref{e:LTRPE} - \eqref{e:LTRuv}
has a uniform (in $n$) life-span $[0, T_{s,\gamma,\sigma, \delta, \e, \chi, R, \w_0, U}]$,
in which $\||\w^n|\|_{s+8,\gamma}$ is uniformly bounded by estimate \eqref{e:wn est}. Based on
this uniform bound, one may apply the standard energy methods to $\w^{n+1} -\w^n$ to prove
that the approximate sequence $\{\w^n\}_{n \in \N}$ is indeed Cauchy in the norm
$\sup_{t \in [0,T]} \||\cdot|\|_{s+6,\gamma}$
where the time $T := T(s,\gamma, \sigma, \delta, \e, \chi, R,
\|\w_0\|_{s+8,\gamma}, \sup_t \||U|\|_{s+9, \infty}) > 0$ is independent of $n$. As a result,
we can pass to the limit $n$ goes to $+\infty$ in \eqref{e:LTRPE} - \eqref{e:LTRuv} to
obtain a solution $\w_R := \lim_{n \to + \infty} \w^n$ to the truncated and regularized
vorticity system \eqref{e:TRPE} - \eqref{e:TRuv}. Moreover, $\w_R$ belongs to $C([0,T];
H^{s+8, \gamma}_{\sigma, \delta} ) \cap \bigcap^{\frac{s}{2}+4}_{l=1} C^l
([0,T]; H^{s-2l+8,\gamma})$ because $\w^n$ does.

The uniform energy estimate \eqref{e:wR est} follows from the standard energy methods,
so its proof will be omitted here. It is noteworthy to mention that unlike the estimates in
subsection \ref{ss:solve LTRPE}, all constants in \eqref{e:wR est} are independent of $R$.
This improvement is based on applying integration by parts appropriately to the integral
involving the convection term $\chi_R \{u_R \dx \w_R + v_R \dy \w_R\}$, but it
does not exist in \eqref{e:LTRPE} - \eqref{e:LTRuv} because the linearization destroys this structure.

The weighted $L^\infty$ controls \eqref{e:IR Linfty} and \eqref{e:min wR} can be derived by
the classical maximum principle (see lemmas \ref{l:max parabolic} and \ref{l:min parabolic}
for instance) as in subsection \ref{ss:Linfty lower}. We leave this for the interested reader.

\end{proof}

%
%
\subsection{Solvability of Regularized Vorticity System and Regularized Prandtl Equations}
\label{ss:solve RPE}
In this subsection we will construct a solution $\wep$ to the regularized vorticity
system \eqref{e:Rvort} - \eqref{e:Ruv} by passing to the limit in its truncated version
\eqref{e:TRPE} - \eqref{e:TRuv}, whose local-in-time solvability and uniform bounds are
shown in subsection \ref{ss:solve TRPE}. Furthermore, we will also justify that $\wep$
solves the regularized Prandtl equations \eqref{e:RPE}.

More precisely, we will complete the proof of proposition \ref{p:loc sol RPE} as follows.
%
%
\begin{proof}[Outline of the proof of proposition \ref{p:loc sol RPE}]
To solve the regularized vorticity system \eqref{e:Rvort} - \eqref{e:Ruv}, we
first pick any function $\chi$ with the properties \eqref{e:chi properties}
in the truncated and regularized vorticity system \eqref{e:TRPE} - \eqref{e:TRuv}.
Then by proposition \ref{p:solve TRPE}, we have a local-in-time solution $\w_R$ to
\eqref{e:TRPE} - \eqref{e:TRuv} and uniform bounds \eqref{e:wR est} - \eqref{e:min wR}
on $\w_R$. Since the estimates \eqref{e:wR est} - \eqref{e:min wR} are independent
of $R$, one can show that there exists a uniform time $T := T(s,\gamma,\sigma,
\delta, \e, \|\w_0\|_{H^{s+4, \gamma}}, U) > 0$ such that $\{\w_R\}_{R \ge1} \subseteq
C([0,T]; H^{s+4,\gamma}_{\sigma,\delta}) \cap C^1 ([0,T]; H^{s+2,\gamma})$ and
$\|\w_R\|_{H^{s+4,\gamma}} \le C_{s,\gamma,\sigma,\delta,\e,\|\w_0\|_{H^{s+4,\gamma}}, U}$
for all $R \ge 1$. Therefore, by the standard compactness argument, there exist a
function $\wep \in C([0,T]; H^{s+4,\gamma}_{\sigma,\delta})
\cap C^1 ([0,T]; H^{s+2,\gamma})$ and a subsequence $\{R_k\}_{k\in \N}$ with
$\lim_{k \to +\infty} R_k = + \infty$ such that $\w_{R_k}$ converges to $\wep$ in
$C([0,T]; H^{s+2}_{loc})$ as $R_k \to + \infty$. As a result, we can pass to the
limit $R_k \to +\infty$ in \eqref{e:TRPE} - \eqref{e:TRuv}, and prove that $\wep$
solves the regularized vorticity system \eqref{e:Rvort} - \eqref{e:Ruv} in a
classical sense.

Finally, we will justify that $(\uep,\vep)$ defined by \eqref{e:Ruv} satisfies the
regularized Prandtl equations \eqref{e:RPE} as follows.

First of all, the matching condition $\eqref{e:RPE}_5$, the Dirichlet boundary
condition $\vep|_{y=0}$ and the initial condition $\eqref{e:RPE}_3$ follows
immediately from the formulae \eqref{e:Ruv}, $\w_0 := \dy u_0$ and the
compatibility condition \eqref{e:u0 conditions}. Then by direct differentiations
on \eqref{e:Ruv}, we also have the incompressibility condition $\eqref{e:RPE}_2$
and $\wep = \dy \uep$.

To justify equation $\eqref{e:RPE}_1$, we substitute $\wep =
\dy \uep$ into $\eqref{e:Rvort}_1$ and obtain, via using $\eqref{e:RPE}_2$,
%
%
\begin{equation} \label{e:wep Rvort}
    \dy \{\dt\uep + \uep \dx\uep + \vep \wep\} = \dy \{\e^2 \dxx \uep + \dy \wep\}.
\end{equation}
Then one may derive $\eqref{e:RPE}_1$ by integrating \eqref{e:wep Rvort} with
respect to $y$ over $[y, +\infty)$ and using the following pointwise convergences:
as $y \to + \infty$,
%
%
\begin{equation} \label{e:vw w converges}
\left\{\begin{aligned}
    \vep \wep, \dy \wep & \to 0\\
    \dt \uep + \uep \dx \uep - \e^2 \dxx \uep & \to \dt U +U\dx U - \e^2 \dxx U
    = : - \dx \pep.
\end{aligned}\right.
\end{equation}
The pointwise convergences \eqref{e:vw w converges} can be shown easily as long as
$\wep \in C([0,T]; H^{s+4,\gamma}_{\sigma, \delta}) \cap C^1 ([0,T]; H^{s+2,\gamma})$,
so we leave this to the interested reader.

Lastly, it remains to show the Dirichlet boundary condition $\uep |_{y=0} = 0$. To
prove this, we evaluate the evolution equation $\eqref{e:RPE}_1$ at $y=0$, and apply
the boundary conditions $\vep|_{y=0} = 0$ and $\eqref{e:Rvort}_3$ to obtain that
 $\uep |_{y=0}$ satisfies the viscous Burger's equation:
%
%
\begin{equation} \label{e:uep Burger}
    \dt (\uep|_{y=0}) + (\uep |_{y=0}) \dx (\uep|_{y=0}) = \e^2 \dxx (\uep|_{y=0}).
\end{equation}
It follows from the classical uniqueness result for the viscous Burger's equation
\eqref{e:uep Burger} that $\uep|_{y=0} \equiv 0$ since it does initially according to
the compatibility condition \eqref{e:u0 conditions}.

\end{proof}

%
%
\appendix
\section{Almost Equivalence of Weighted Norms}\label{s:appendixA}
The purpose of this appendix is to justify the almost equivalence relation \eqref{e:wHg}.
In other words, we will prove the following
%
\begin{prop}[Almost Equivalence of Weighted $H^s$ Norms] \label{p:almost equiv}
Let $s \ge 4$ be an integer, $\gamma \ge 1, \sigma > \gamma + \frac{1}{2}$ and
$\delta \in (0,1)$. For any $\w \in \Hs (\TR)$, we have the following inequality:
there exist constants $c_\delta$ and $\constC >0$ such that
%
\begin{equation}\label{e:almost equiv}
    c_\delta \|\w\|_{H^{s,\gamma}_g} \le \|\w\|_{H^{s,\gamma}} + \|u-U\|_{H^{s,\gamma-1}}
    \le \constC \{ \|\w\|_{H^{s,\gamma}_g} + \| \dx^s U\|_{L^2} \}
\end{equation}
provided that $\w = \dy u$, $u|_{y=0} = 0$ and $ \lim_{y \to +\infty} u =U$, where
the weighted $H^s$ norms $\|\cdot\|_{H^{s,\gamma}}$ and $\|\cdot\|_{H^{s,\gamma}_g}$
are defined by \eqref{e:w2Hsgamma} and \eqref{e:Hgsgamma} respectively.
\end{prop}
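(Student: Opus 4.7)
The plan is to observe that the two norms $\|\omega\|_{H^{s,\gamma}}$ and $\|\omega\|_{H^{s,\gamma}_g}$ differ only in one mode, namely $\alpha=(s,0)$: all multi-indices with $\alpha_1\le s-1$ appear identically on both sides, so only the comparison between $(1+y)^\gamma\partial_x^s\omega$ and $(1+y)^\gamma g_s$ matters. The companion norm $\|u-U\|_{H^{s,\gamma-1}}$ will be reduced, mode by mode, to weighted $H^s$ data on $\omega$ via Hardy-type inequalities (lemma \ref{l:Hardy}), using the boundary conditions $u|_{y=0}=0$ and $\lim_{y\to\infty}u=U$. Throughout, the $L^\infty$ part of the space $\Hs$ will be used only via the two pointwise bounds
\[
|(1+y)^\sigma\omega|\le \delta^{-1},\qquad |(1+y)(\partial_y\omega/\omega)|\le \delta^{-2},
\]
which come from $(1+y)^\sigma\omega\ge\delta$ and $|(1+y)^{\sigma+1}\partial_y\omega|\le\delta^{-1}$.

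For the \emph{left} inequality, I would use the algebraic identity $g_s=\partial_x^s\omega-(\partial_y\omega/\omega)\partial_x^s(u-U)$ with the triangle inequality to write $\|(1+y)^\gamma g_s\|_{L^2}\le \|(1+y)^\gamma\partial_x^s\omega\|_{L^2}+\delta^{-2}\|(1+y)^{\gamma-1}\partial_x^s(u-U)\|_{L^2}$, and bound the remaining lower modes of $H^{s,\gamma}_g$ directly by $\|\omega\|_{H^{s,\gamma}}$. For the \emph{right} inequality, the modes of $u-U$ with $\alpha_2\ge 1$ reduce, via $\partial_y(u-U)=\omega$, to shifted $\omega$-modes already present in $\|\omega\|_{H^{s,\gamma}_g}$. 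For the modes with $\alpha_2=0$ and $\alpha_1\le s-1$, I would introduce the auxiliary function $f_{\alpha_1}:=\partial_x^{\alpha_1}(u-U)+e^{-y}\partial_x^{\alpha_1}U$, which vanishes at both $y=0$ and $y=+\infty$, and apply Hardy to $f_{\alpha_1}$; the contribution from $e^{-y}\partial_x^{\alpha_1}U$ is absorbed into $\|\partial_x^s U\|_{L^2}$ after Poincar\'e on $\mathbb{T}$ (derivatives of $U$ have zero mean for $\alpha_1\ge 1$), while the $\alpha_1=0$ mode is controlled directly from the $L^\infty$ bounds on $\omega$ and the condition $\sigma>\gamma+\tfrac12$.

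The main obstacle is the top mode $\alpha=(s,0)$: applying Hardy to $\partial_x^s(u-U)$ produces a factor $C_\gamma\delta^{-2}$ that cannot be absorbed on the left of the identity $\partial_x^s\omega=g_s+(\partial_y\omega/\omega)\partial_x^s(u-U)$. The key trick is to use the factorization
\[
g_s=\omega\,\partial_y\!\left(\frac{\partial_x^s(u-U)}{\omega}\right),
\]
which says that $\phi:=\partial_x^s(u-U)/\omega$ satisfies the first-order ODE $\partial_y\phi=g_s/\omega$ with $\phi(0,x)=-\partial_x^sU(x)/\omega(0,x)$. Solving this ODE and using $|\omega|\le\delta^{-1}(1+y)^{-\sigma}$, $|1/\omega|\le\delta^{-1}(1+y)^{\sigma}$ together with the weighted Hardy inequality
\[
\int_0^\infty(1+y)^{2(\gamma-1-\sigma)}\Bigl(\int_0^y h\,d\tilde y\Bigr)^{\!2}dy\;\le\; C_{\gamma,\sigma}\int_0^\infty(1+y)^{2(\gamma-\sigma)}h^2\,d\tilde y,
\]
valid precisely because $\gamma-1-\sigma<-\tfrac12$ (i.e.\ because $\sigma>\gamma+\tfrac12$), one obtains
\[
\|(1+y)^{\gamma-1}\partial_x^s(u-U)\|_{L^2}\;\le\; C_{\gamma,\sigma,\delta}\bigl\{\|\partial_x^s U\|_{L^2(\T)}+\|(1+y)^\gamma g_s\|_{L^2}\bigr\}.
\]
Substituting this back into $\partial_x^s\omega=g_s+(\partial_y\omega/\omega)\partial_x^s(u-U)$ closes the bound on $\|(1+y)^\gamma\partial_x^s\omega\|_{L^2}$ without any circular absorption, completing the right inequality. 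This is also where the hypothesis $\sigma>\gamma+\tfrac12$ (as opposed to the naive $\sigma\ge\gamma$) enters in an essential way.
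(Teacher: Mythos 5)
Your proposal is correct and follows essentially the same route as the paper's proof: the decisive step in both is the factorization $g_s=\omega\,\partial_y\bigl(\partial_x^s(u-U)/\omega\bigr)$ combined with a Hardy inequality at the weight exponent $\gamma-\sigma-1<-\tfrac12$ and the boundary value $\partial_x^s(u-U)|_{y=0}=-\partial_x^sU$, with the pointwise bounds $\delta\le(1+y)^\sigma\omega\le\delta^{-1}$ used to pass between $\partial_x^s(u-U)$ and $\partial_x^s(u-U)/\omega$ (this is exactly lemma \ref{l:L2 dxk u-U dxk w gk} at $k=s$). The only, immaterial, difference is in the intermediate modes $\alpha_2=0$, $1\le\alpha_1\le s-1$: the paper reduces them to the top mode by Wirtinger's inequality in $x$, whereas you apply Hardy in $y$ to a corrected function vanishing at both ends; both are routine.
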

%
\begin{proof}
Without loss of generality, we only need to prove inequality \eqref{e:almost equiv} for
the smooth $\w$ because the full version can be recovered by the standard density argument.
First of all, it follows from the definition of
$\|\w\|_{H^{s,\gamma}}$ and $\|u-U\|_{H^{s,\gamma-1}}$ that
%
\begin{equation} \label{e:defn w u-U}
\begin{split}
    &\|\w\|_{H^{s,\gamma}} + \sum^s_{k=0} \|(1+y)^{\gamma-1} \dx^k (u-U)\|_{L^2}
    \le \|\w\|_{H^{s,\gamma}} + \|u-U\|_{H^{s,\gamma-1}} \\
    &\qquad\qquad\qquad\le 2 \{\|\w\|_{H^{s,\gamma}} + \sum^s_{k=0} \|(1+y)^{\gamma-1}
    \dx^k (u-U) \|_{L^2}\}.
\end{split}
\end{equation}
Furthermore, applying Wirtinger's inequality in the variable $x$ repeatedly and part (i)
of lemma \ref{l:Hardy}, we have
\[\left\{\begin{aligned}
    \sum^s_{k=1} \|(1+y)^{\gamma-1} \dx^k (u-U)\|_{L^2} & \le \frac{1+\pi^{-2s}}{1-\pi^{-2}}
    \| (1+y)^{\gamma-1} \dx^s (u-U)\|_{L^2} \\
    \|(1+y)^{\gamma-1} (u-U) \|_{L^2} & \le \frac{2}{2\gamma-1} \|(1+y)^\gamma \w\|_{L^2} \le \frac{2}{2\gamma-1} \|\w\|_{H^{s,\gamma}},
\end{aligned}\right.\]
and hence, there exists a constant $C_{s,\gamma} > 0$ such that
%
\begin{equation} \label{e:Csgamma}
\begin{split}
    &\|\w\|_{H^{s,\gamma}} +  \|(1+y)^{\gamma-1} \dx^s (u-U)\|_{L^2}
    \le \|\w\|_{H^{s,\gamma}} + \sum^s_{k=0} \|(1+y)^{\gamma-1} \dx^k
    (u-U)\|_{L^2}\\
    &\qquad\qquad\qquad\le C_{s,\gamma} \{\|\w\|_{H^{s,\gamma}} +
    \|(1+y)^{\gamma-1} \dx^s (u-U) \|_{L^2}\}.
\end{split}
\end{equation}
Therefore, according to inequalities \eqref{e:defn w u-U} and \eqref{e:Csgamma}, it suffices
to prove
%
\begin{equation}\label{e:cC ineq}
    c_\delta \|\w\|_{H^{s,\gamma}_g} \le \|\w\|_{H^{s,\gamma}} + \|(1+y)^{\gamma-1} \dx^s
    (u-U)\|_{L^2}
    \le C_{\gamma,\sigma,\delta} \{\|\w\|_{H^{s,\gamma}_g} + \|\dx^s U\|_{L^2} \}
\end{equation}
for some constants $c_\delta$ and $C_{\gamma, \sigma, \delta} > 0$.

The key idea of proving \eqref{e:cC ineq} is the following
%
\begin{lem}[$L^2$ Comparison of $\dx^k (u-U), \dx^k\w$ and $g_k$]\label{l:L2 dxk u-U dxk w gk}
Let $s \ge 4$ be an integer, $\gamma \ge 1, \sigma > \gamma + \frac{1}{2}$ and
$\delta \in (0,1)$. If $\w\in \Hs (\TR)$, then for any $k =1,2, \cdots, s$,
%
\begin{equation} \label{e:L2 dxk u-U dxk w gk}
    \|(1+y)^\gamma g_k\|_{L^2} \le \|(1+y)^\gamma \dx^k \w\|_{L^2} + \delta^{-2}
    \| (1+y)^{\gamma-1} \dx^k (u-U)\|_{L^2}
\end{equation}
where $g_k := \dx^k \w - \frac{\dy \w}{\w} \dx^k (u-U)$. In addition, if $u|_{y=0}=0$,
then for any $k=1,2, \cdots, s$,
%
\begin{equation}\label{e:L2 dxk u-U dxk w gk uy0=0}
    \|(1+y)^\gamma \dx^k \w\|_{L^2} + \|(1+y)^{\gamma-1} \dx^k (u-U)\|
    \le C_{\gamma, \sigma, \delta} \{\|(1+y)^\gamma g_k \|_{L^2} + \|\dx^k U\|_{L^2(\T)} \}
\end{equation}
where $C_{\gamma, \sigma, \delta}$ is a constant depending on $\gamma, \sigma$ and $\delta$ only.
\end{lem}

Assuming lemma \ref{l:L2 dxk u-U dxk w gk}, which will be shown at the end of this appendix,
for the moment, we can show inequality \eqref{e:cC ineq} as follows.

Applying lemma \ref{l:L2 dxk u-U dxk w gk} for $k=s$, we obtain, from \eqref{e:L2 dxk u-U dxk w gk}
and \eqref{e:L2 dxk u-U dxk w gk uy0=0},
%
\begin{equation} \label{e:lemA2 k=s A5 A6}
\begin{split}
    \frac{1}{2} \delta^4 \|(1+y)^\gamma g_s\|^2_{L^2} \le & \|(1+y)^\gamma \dx^s \w\|^2_{L^2}
    + \|(1+y)^{\gamma-1} \dx^s (u-U) \|^2_{L^2} \\
    & \qquad \qquad \le C_{\gamma, \sigma, \delta} \{ \|(1+y)^\gamma g_s\|^2_{L^2}
    + \|\dx^s U\|^2_{L^2(\T)} \}.
\end{split}
\end{equation}
Adding $\sum_{\substack{|\alpha| \le s \\ \alpha_1 \le s-1}} \| (1+y)^{\gamma+\alpha_2}
D^\alpha \w\|^2_{L^2}$
to \eqref{e:lemA2 k=s A5 A6}, we have
\[
    \frac{1}{2}\delta^4 \|\w\|^2_{H^{s,\gamma}_g} \le \|\w\|^2_{H^{s,\gamma}}
    + \| (1+y)^{\gamma-1} \dx^s (u-U)\|^2_{L^2}
    \le C_{\gamma, \sigma, \delta} \{\|\w\|^2_{H^{s,\gamma}_g}
    + \|\dx^s U\|^2_{L^2 (\T)}\}
\]
which implies inequality \eqref{e:cC ineq}.

\end{proof}

Finally, in order to complete the justification of the almost equivalence relation
\eqref{e:almost equiv}, we will prove the lemma \eqref{l:L2 dxk u-U dxk w gk} as
follows.
%
\begin{proof}[Proof of lemma \ref{l:L2 dxk u-U dxk w gk}]
To prove \eqref{e:L2 dxk u-U dxk w gk}, let us first recall from the
definition of $\Hs$ that
%
\begin{equation}\label{e:defn Hs}
\left\{\begin{aligned}
    \delta (1+y)^{-\sigma} \le  \w  & \le \delta^{-1} (1+y)^{-\sigma}\\
    \dy \w & \le \delta^{-1} (1+y)^{-\sigma-1},
\end{aligned}\right.
\end{equation}
so $\frac{\dy \w}{\w} \le \delta^{-2} (1+y)^{-1}$, and hence, for any $k =1,2,\cdots, s$,
\begin{align*}
    \|(1+y)^{\gamma} g_k\|_{L^2} & \le \| (1+y)^\gamma \dx^k \w\|_{L^2} +
    \|(1+y) \frac{\dy \w}{\w} \|_{L^\infty} \|(1+y)^{\gamma-1} \dx^k (u-U)\|_{L^2}\\
    & \le \|(1+y)^\gamma \dx^k \w\|_{L^2} + \delta^{-2} \|(1+y)^{\gamma-1}
    \dx^k (u-U)\|_{L^2}
\end{align*}
which is inequality \eqref{e:L2 dxk u-U dxk w gk}.

Next, we are going to show \eqref{e:L2 dxk u-U dxk w gk uy0=0}. The main observation is
that we can also rewrite $g_k :=\w\dx \left(\frac{\dx^k (u-U)}{\w}\right)$. Thus, applying
$\eqref{e:defn Hs}_1$ and part (ii) of lemma \ref{l:Hardy}, we have
%
\begin{align}
    \|(1+y)^{\gamma-1} \dx^k (u-U)\|_{L^2}
    & \le \delta^{-1} \|(1+y)^{\gamma-\sigma-1} \frac{\dx^k (u-U)}{\w}\|_{L^2} \notag\\
    & \le C_{\gamma, \sigma} \delta^{-1} \left\{ \left\| \frac{\dx^k U}{\w|_{y=0}}
    \right\|_{L^2(\T)} + \left\|(1+y)^{\gamma-\sigma} \dy \left(\frac{\dx^k (u-U)}{\w}
    \right) \right\|_{L^2}  \right\} \notag\\
    & \le C_{\gamma, \sigma} \delta^{-2} \{\|\dx^k U\|_{L^2 (\T)}
    + \|(1+y)^\gamma g_k \|_{L^2} \}. \label{e:pf A6}
\end{align}
Now, using triangle inequality, \eqref{e:defn Hs} and \eqref{e:pf A6}, we also have
%
\begin{align}
    \|(1+y)^\gamma \dx^k \w\|_{L^2} & \le \|(1+y)^\gamma g_k \|_{L^2}
    + \delta^{-2} \|(1+y)^{\gamma-1} \dx^k (u-U)\|_{L^2} \notag\\
    & \le C_{\gamma, \sigma, \delta} \{\|(1+y)^\gamma g_k\|_{L^2} + \|\dx^k U\|_{L^2 (\T)} \}.
    \label{e:pf A6 2}
\end{align}
Summing up \eqref{e:pf A6} and \eqref{e:pf A6 2}, we prove \eqref{e:L2 dxk u-U dxk w gk uy0=0}.

\end{proof}

%
%
\section{Calculus Inequalities}\label{s:appendixB}
In this appendix we will introduce several calculus inequalities for the incompressible
velocity field $(u,v)$, the vorticity $\w$ and the quantity $g_k$. These inequalities
are not related to any equations; they hold just because of elementary computations.

%
%
\subsection{Basic Inequalities}\label{ss:Basic Ineq}
In this subsection we will state without proof two elementary inequalities (i.e.,
lemma  \ref{l:Hardy} and lemma \ref{l:Sobolev} below).

Let us first introduce Hardy's type inequalities.
%
\begin{lem}[Hardy's Type Inequalities] \label{l:Hardy}
Let $f : \TR \to \R$. Then
\begin{itemize}
\item[(i)] if $\lambda > - \frac{1}{2}$ and $ \lim_{y \to +\infty} f(x,y) = 0$, then
%
\begin{equation} \label{e:Hardy1}
    \|(1+y)^\lambda f\|_{L^2 (\TR)} \le \frac{2}{2\lambda +1}
    \| (1+y)^{\lambda +1} \partial_y f\|_{L^2 (\TR)};
\end{equation}
\item[(ii)] if $\lambda < - \frac{1}{2}$, then
%
\begin{equation}\label{e:Hardy2}
    \|(1+y)^\lambda f\|_{L^2 (\TR)} \le \sqrt{- \frac{1}{2\lambda +1}}
    \| f|_{y=0}\|_{L^2 (\T)} - \frac{2}{2\lambda + 1}
    \| (1+y)^{\lambda + 1} \partial_y f \|_{L^2 (\TR)}.
\end{equation}
\end{itemize}
\end{lem}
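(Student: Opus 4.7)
The plan is to treat both parts via a single integration-by-parts against the weight $(1+y)^{2\lambda+1}$, exploiting the identity $(1+y)^{2\lambda} = \frac{1}{2\lambda+1}\dy(1+y)^{2\lambda+1}$. Setting $\mu := 2\lambda+1$ and integrating by parts in $y$ over $\TR$ yields the common identity
\begin{equation*}
    \iint_{\TR}(1+y)^{2\lambda} f^2
    = \frac{1}{\mu}\left[(1+y)^\mu \int_\T f^2\,dx\right]_{y=0}^{y=+\infty}
    - \frac{2}{\mu}\iint_{\TR}(1+y)^\mu f\,\dy f.
\end{equation*}
The two cases then differ only in the sign of $\mu$ and in how the two boundary contributions are handled.

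For part (i), where $\mu > 0$, the hypothesis $\lim_{y\to+\infty} f = 0$ together with a standard cutoff argument (truncating $f$ in $y$ by $\chi_R$ and passing to the limit $R\to+\infty$, using that the right-hand side of \eqref{e:Hardy1} is assumed finite) eliminates the boundary term at $y=+\infty$, while the remaining term $-\frac{1}{\mu}\|f|_{y=0}\|_{L^2(\T)}^2$ is nonpositive and may simply be discarded to obtain an upper bound. Cauchy-Schwarz applied to the last integral, followed by dividing through by $\|(1+y)^\lambda f\|_{L^2}$, then yields \eqref{e:Hardy1} with the sharp constant $\frac{2}{2\lambda+1}$.

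For part (ii), where $\mu < 0$, the weight $(1+y)^\mu$ itself decays at infinity and so the $y=+\infty$ contribution vanishes (again after a cutoff), but the boundary term at $y=0$ is now $-\|f|_{y=0}\|_{L^2(\T)}^2/\mu \ge 0$ and must be retained. Abbreviating $A := \|(1+y)^\lambda f\|_{L^2}^2$, $B := \|f|_{y=0}\|_{L^2(\T)}^2$ and $C := \|(1+y)^{\lambda+1}\dy f\|_{L^2}$, Cauchy-Schwarz on the remaining integral reduces the identity to the quadratic inequality
\begin{equation*}
    A \le \frac{B}{|\mu|} + \frac{2C}{|\mu|}\sqrt{A}.
\end{equation*}
Completing the square in $\sqrt{A}$ gives $\left(\sqrt{A}-C/|\mu|\right)^2 \le C^2/\mu^2 + B/|\mu|$, and subadditivity of the square root produces $\sqrt{A} \le 2C/|\mu| + \sqrt{B/|\mu|}$, which is precisely \eqref{e:Hardy2} after rewriting in terms of $\mu = 2\lambda+1$.

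The only delicate point throughout is the justification of the vanishing of the boundary contribution at $y=+\infty$, but this is not a genuine obstacle: in both parts it can be secured by first truncating $f$ in $y$ against a smooth cutoff $\chi_R$, applying the argument above to $\chi_R f$, and passing to the limit $R\to+\infty$ using the finiteness of $C$ together with dominated convergence. Everything else is a routine consequence of the single integration by parts and Cauchy-Schwarz.
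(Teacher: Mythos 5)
Your proof is correct. Note that the paper does not actually supply a proof of this lemma (it states that ``the proof of lemma \ref{l:Hardy} is elementary, so we leave it for the reader''), so there is nothing to compare against; your argument --- the single integration by parts against $(1+y)^{2\lambda+1}$, Cauchy--Schwarz, and in case (ii) the quadratic inequality in $\sqrt{A}$ --- is exactly the standard Hardy-inequality computation the authors evidently had in mind, and your handling of the boundary term at $y=+\infty$ (via truncation, or equivalently via the pointwise bound $(1+y)^{2\lambda+1}\int_\T f^2\,dx \le \frac{1}{2\lambda+1}\iint_{\tilde y\ge y}(1+\tilde y)^{2\lambda+2}|\partial_y f|^2$ in case (i)) is adequate.
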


The proof of lemma \ref{l:Hardy} is elementary, so we leave it for the reader.

Next, we will state the following Sobolev's type inequality.
%
\begin{lem}\label{l:Sobolev}
Let $f: \TR \to \R$. Then there exists a universal constant $C > 0$ such that
%
\begin{equation}\label{e:Sobolev}
    \|f\|_{L^\infty (\TR)}  \le C \{\|f\|_{L^2 (\TR)} +\|\partial_x f\|_{L^2 (\TR)}
    + \|\partial^2_y f\|_{L^2 (\TR)} \}
\end{equation}
\end{lem}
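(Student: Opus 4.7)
My plan is to reduce the inequality to an anisotropic Sobolev embedding $H^{1,2} \hookrightarrow L^\infty$ on $\T \times \R$, which I verify via Fourier analysis after a suitable extension of $f$ across the boundary $\{y=0\}$.

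\emph{Extension.} I extend $f$ from $\TR$ to all of $\T \times \R$ via the Stein-type reflection
\[
    \tilde f(x,y) := \begin{cases} f(x,y), & y \ge 0,\\ 3 f(x,-y) - 2 f(x,-2y), & y < 0, \end{cases}
\]
so that $\tilde f$ and $\partial_y \tilde f$ are continuous across $y=0$ (the coefficients $3$ and $-2$ are chosen precisely to enforce this). Consequently $\partial_y^2 \tilde f$ is an honest $L^2$-function with a bounded jump at $y=0$ but no Dirac mass, and obvious changes of variable give
\[
    \|\tilde f\|_{L^2(\T \times \R)}^2 + \|\partial_x \tilde f\|_{L^2(\T \times \R)}^2 + \|\partial_y^2 \tilde f\|_{L^2(\T \times \R)}^2 \le C \bigl\{ \|f\|_{L^2(\TR)}^2 + \|\partial_x f\|_{L^2(\TR)}^2 + \|\partial_y^2 f\|_{L^2(\TR)}^2 \bigr\}
\]
for a universal constant $C$.

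\emph{Fourier analysis.} Expand $\tilde f$ in a Fourier series in $x$ together with a Fourier transform in $y$, writing $\tilde f(x,y) = \sum_{n\in\Z} e^{2\pi i n x}\int_{\R} c_n(\xi)\,e^{2\pi i \xi y}\,d\xi$. Bounding each exponential pointwise by $1$ and applying Cauchy--Schwarz with the weight $(1+n^2+\xi^4)^{-1/2}$ gives
\[
    |\tilde f(x,y)|^2 \le \Bigl( \sum_n \int_\R \frac{d\xi}{1+n^2+\xi^4} \Bigr) \Bigl( \sum_n \int_\R (1+n^2+\xi^4)\,|c_n(\xi)|^2\,d\xi \Bigr).
\]
Rescaling $\xi = (1+n^2)^{1/4} u$ shows $\int_\R (1+n^2+\xi^4)^{-1}\,d\xi = C_0(1+n^2)^{-3/4}$, so the first factor equals $C_0 \sum_n (1+n^2)^{-3/4}$, which converges because $3/2 > 1$. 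Plancherel identifies the second factor, up to universal constants, with $\|\tilde f\|_{L^2}^2 + \|\partial_x \tilde f\|_{L^2}^2 + \|\partial_y^2 \tilde f\|_{L^2}^2$. Combining with the extension inequality and restricting to $y \ge 0$ yields \eqref{e:Sobolev}.

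\emph{Main obstacle.} The principal difficulty is the anisotropy of the right-hand side: only one $x$-derivative is available but the full second $y$-derivative is allowed. Standard iterated one-dimensional Sobolev embeddings --- for instance, first bounding $\|f(\cdot,y)\|_{L^\infty_x}$ by $\|f(\cdot,y)\|_{H^1_x}$ at each fixed $y$ and then trying to take $\sup_y$ of the resulting $y$-profile via a further Sobolev or trace argument --- inevitably introduce mixed derivatives such as $\partial_x \partial_y f$ or $\partial_x \partial_y^2 f$, which are \emph{not} present on the right-hand side of \eqref{e:Sobolev}. The Cauchy--Schwarz trick with the weight $(1+n^2+\xi^4)^{-1}$ treats the two variables simultaneously with their natural parabolic scalings and thereby bypasses this obstruction, reducing the verification to the quantitative summability $\sum_n (1+n^2)^{-3/4} < \infty$, which is exactly the discrete manifestation of the anisotropic embedding condition $\frac{1}{s_1}+\frac{1}{s_2} < 2$ for $(s_1,s_2)=(1,2)$.
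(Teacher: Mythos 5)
Your proposal is correct and follows essentially the same route the paper sketches for this lemma: extend $f$ across $\{y=0\}$ by a standard (Stein-type) reflection and then conclude via the Fourier inversion formula, with the anisotropic weight $(1+n^2+\xi^4)$ matching exactly the derivatives $\partial_x$ and $\partial_y^2$ appearing on the right-hand side. The reflection coefficients, the scaling computation $\int_\R(1+n^2+\xi^4)^{-1}\,d\xi = C_0(1+n^2)^{-3/4}$, and the summability check are all accurate, so your write-up is simply a complete version of the argument the paper leaves to the reader.
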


To prove lemma \ref{l:Sobolev}, one may extend the domain of $f$ to $\R^2$ via the standard
extension argument. Then inequalities \eqref{e:Sobolev} follows easily by the Fourier's
inversion formula. We leave this to the reader as well.

%
%
\subsection{Estimates for $\Hs$ Functions}\label{ss:est H}
In this subsection we will use the weighted norm $\|\cdot\|_{H^{s,\gamma}_g}$ to control
certain $L^2$ and $L^\infty$ norms of $u,v,\w,g_k$ and their derivatives. To derive
these estimates, we shall apply lemma \ref{l:Hardy} and lemma \ref{l:Sobolev}, which was
introduced previously in subsection \ref{ss:Basic Ineq}.

Our aim is to prove the following
%
\begin{prop}[$L^2$ and $L^\infty$ Controls on $u,v,\w$ and $g_k$]\label{p:L2Linfty uvwgk}
Let the vector field $(u,v)$ defined on $\TR$ satisfy the incompressibility condition
$\dx u + \dy v =0$, the Dirichlet boundary condition $u|_{y=0} = v|_{y=0} \equiv 0$ and
$\lim_{y \to +\infty} u = U$. If the vorticity $\w := \dy u\in \Hs$ for some constants
$s \ge 4, \gamma \ge 1, \sigma > \gamma + \frac{1}{2}$ and $\delta \in (0,1)$, then we
have the following estimates: there exists a constant $\constC > 0$ such that\\
\hfill\\
{\bf Weighted $L^2$ Estimates}:
\begin{itemize}
\item[(i)] for all $k = 0,1, \cdots, s$,
%
\begin{equation} \label{e:B3 L2 u}
    \|(1+y)^{\gamma-1} \dx^k (u-U)\|_{L^2} \le \constC \{\|\w\|_{H^{s,\gamma}_g}
    + \|\dx^s U\|_{L^2} \},
\end{equation}
\item[(ii)] for all $k=0,1,\cdots,s-1$,
%
\begin{equation} \label{e:B3 L2 v}
    \left\| \frac{\dx^k v + y \dx^{k+1} U}{1+y} \right\|_{L^2}
    \le \constC \{\|\w\|_{H^{s,\gamma}_g} + \|\dx^s U\|_{L^2} \},
\end{equation}
\item[(iii)] for all $|\alpha| \le s$,
%
\begin{equation} \label{e:B3 L2 w}
    \|(1+y)^{\gamma+\alpha_2} D^\alpha \w\|_{L^2} \le
    \begin{cases}
        \constC \{\|\w\|_{H^{s,\gamma}_g} + \|\dx^s U\|_{L^2}\} & \text{ if } \alpha = (s,0)\\
        \|\w\|_{H^{s,\gamma}_g} & \text{ if } \alpha \ne (s,0).
    \end{cases}
\end{equation}
\item[(iv)] for all $k = 1,2, \cdots,s$,
%
\begin{equation} \label{e:B3 L2 gk}
    \|(1+y)^\gamma g_k \|_{L^2} \le
    \begin{cases}
        \constC \{\|\w\|_{H^{s,\gamma}_g} + \|\dx^s U\|_{L^2} \} & \text{ if }
        k=1,2,\cdots,s-1\\
        \|\w\|_{H^{s,\gamma}_g} & \text{ if } k=s,
    \end{cases}
\end{equation}
where the quantity $g_k := \dx^k \w - \frac{\dy \w}{\w} \dx^k (u-U)$.
\end{itemize}
\hfill\\
{\bf Weighted $L^\infty$ Estimates}:
\begin{itemize}
\item[(v)] for all $k=0,1,\cdots, s-1$,
%
\begin{equation} \label{e:B3 Linfty u}
    \|\dx^k u\|_{L^\infty} \le \constC \{\|\w\|_{H^{s,\gamma}_g} + \|\dx^s U\|_{L^2} \},
\end{equation}
\item[(vi)] for all $k=0,1,\cdots, s-2$,
%
\begin{equation} \label{e:B3 Linfty v}
    \left\| \frac{\dx^k v}{1+y} \right\|_{L^\infty}
    \le \constC \{\|\w\|_{H^{s,\gamma}_g} + \|\dx^s U\|_{L^2} \},
\end{equation}
\item[(vii)] for all $|\alpha| \le s-2$,
%
\begin{equation} \label{e:B3 Linfty w}
    \|(1+y)^{\gamma+\alpha_2} D^\alpha \w\|_{L^\infty}
    \le C_{s,\gamma} \|\w\|_{\Hg}.
\end{equation}
\end{itemize}

\end{prop}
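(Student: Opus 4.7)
My plan is to establish the weighted $L^2$ estimates (iii), (iv), (i), (ii) first in that order, since each reduces to the previous ones plus the tools from appendix \ref{s:appendixA}, and then derive the weighted $L^\infty$ estimates (v), (vi), (vii) by applying the Sobolev inequality \eqref{e:Sobolev} to an appropriately weighted quantity.

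For (iii), the case $\alpha\ne(s,0)$ (equivalently $\alpha_1\le s-1$) is literally one of the terms in the definition \eqref{e:Hgsgamma} of $\|\w\|_{\Hg}$, so nothing is to prove. For $\alpha=(s,0)$, I would invoke inequality \eqref{e:L2 dxk u-U dxk w gk uy0=0} of lemma \ref{l:L2 dxk u-U dxk w gk} with $k=s$, which yields exactly the stated bound with the $\|\dx^s U\|_{L^2}$ correction. For (iv), the case $k=s$ is in the norm. For $k\le s-1$, I expand $g_k:=\dx^k\w-\tfrac{\dy\w}{\w}\dx^k(u-U)$ and use the pointwise bound $\|(1+y)\tfrac{\dy\w}{\w}\|_{L^\infty}\le\delta^{-2}$ (read off from the definition of $\Hs$) to get
\[
\|(1+y)^\gamma g_k\|_{L^2}\le \|(1+y)^\gamma\dx^k\w\|_{L^2}+\delta^{-2}\|(1+y)^{\gamma-1}\dx^k(u-U)\|_{L^2},
\]
which is controlled by the part (iii) bound just proven together with part (i).

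For (i), the case $k=s$ is again \eqref{e:L2 dxk u-U dxk w gk uy0=0} of lemma \ref{l:L2 dxk u-U dxk w gk}. For $1\le k\le s-1$, since $\dx^k(u-U)$ has zero mean in $x$, iterated Wirtinger's inequality in $x$ reduces the estimate to $k=s$. For $k=0$, I note that $u-U$ vanishes at $y=+\infty$ and $\dy(u-U)=\w$, so Hardy's inequality \eqref{e:Hardy1} with $\lambda=\gamma-1\ge 0$ gives $\|(1+y)^{\gamma-1}(u-U)\|_{L^2}\lesssim\|(1+y)^\gamma\w\|_{L^2}\le\|\w\|_{\Hg}$. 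For (ii), I compute $\dx^k v + y\dx^{k+1}U=-\int_0^y\dx^{k+1}(u-U)\,d\tilde y$, which vanishes at $y=0$; applying the Hardy inequality \eqref{e:Hardy2} with $\lambda=-1$ gives an unweighted $L^2$ control in terms of $\|\dx^{k+1}(u-U)\|_{L^2}$, which (since $\gamma\ge 1$) is dominated by part (i) with index $k+1\le s$.

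For the $L^\infty$ estimates, I apply lemma \ref{l:Sobolev} to the weighted quantities $(1+y)^{\gamma+\alpha_2}D^\alpha\w$, $\dx^k u$, and $\dx^k v/(1+y)$ respectively, producing $L^2$ norms of themselves together with their $\dx$- and $\dyy$-derivatives. The key observation is that differentiating a weight of the form $(1+y)^{\gamma+\alpha_2}$ only decreases the power, so each resulting term is controlled by one of (iii), (i), or (ii). For (vi) I first write $\dx^k v=-y\dx^{k+1}U-\int_0^y\dx^{k+1}(u-U)d\tilde y$ and treat the two pieces separately, the $U$-piece using periodicity in $x$ and the relation $U=\int_0^\infty\w\,dy$ to bound $\|\dx^kU\|_{L^\infty(\T)}$ in terms of $\|\w\|_{\Hg}$, $\|\dx^s U\|_{L^2}$, $\sigma$, and $\delta$.

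The main obstacle, and the only place real care is needed, is the proper bookkeeping of which $\|\dx^s U\|_{L^2}$ corrections appear: the $U$-term enters exactly through the top-order case $\alpha=(s,0)$ of lemma \ref{l:L2 dxk u-U dxk w gk} and propagates downward. A related subtlety in (v) and (vi) is that for small $k$ one must account for the constant-in-$x$ part of $U$, which is controlled by using $U(t,x)=\int_0^{+\infty}\w\,dy$ together with the pointwise bound $\w\le\delta^{-1}(1+y)^{-\sigma}$ coming from the $\Hs$ class and the hypothesis $\sigma>\gamma+\tfrac12\ge\tfrac32$; this produces a constant depending only on $\sigma$ and $\delta$, consistent with the asserted $\constC$.
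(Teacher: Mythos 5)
Your proposal is correct and follows essentially the same route as the paper: the weighted $L^2$ bounds all reduce to the ingredients of the almost-equivalence relation \eqref{e:almost equiv} (Wirtinger in $x$, Hardy's inequalities, and lemma \ref{l:L2 dxk u-U dxk w gk} for the top-order term $\dx^s(u-U)$), and the weighted $L^\infty$ bounds follow by applying lemma \ref{l:Sobolev} to the appropriately weighted quantities, exactly as in appendix \ref{ss:est H}. The one point where you genuinely diverge is $\|u\|_{L^\infty}$ in (v): the paper uses $0\le u\le U=\int_0^{+\infty}\w\,dy$ and Cauchy--Schwarz to get $\|U\|_{L^2}\le C_\gamma\|(1+y)^\gamma\w\|_{L^2}\le C_\gamma\|\w\|_{\Hg}$, which lands directly in the required form, whereas your pointwise bound $\w\le\delta^{-1}(1+y)^{-\sigma}$ produces only an absolute constant $\delta^{-1}/(\sigma-1)$; this is consistent with the stated right-hand side only after the additional (true but unstated) observation that $(1+y)^\sigma\w\ge\delta$ and $\sigma>\gamma+\tfrac12$ force $\|\w\|_{\Hg}\ge\delta(2\sigma-2\gamma-1)^{-1/2}>0$, so the constant can be absorbed into $\constC\|\w\|_{\Hg}$. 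One further small precision: lemma \ref{l:Sobolev} cannot be applied to $\dx^k u$ itself (since $\dx^k U$ is not in $L^2(\TR)$), so you must first split $u=(u-U)+U$ and treat $\dx^kU$ by one-dimensional Sobolev/Wirtinger on $\T$ for all $k\ge 1$, not only for small $k$.
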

%
\begin{proof}
\hfill\\
(i) It follows from the definition of $\|\cdot\|_{H^{s,\gamma-1}}$ that $\|(1+y)^{\gamma-1}
\dx^k (u-U)\|_{L^2} \le \|u-U\|_{H^{s, \gamma-1}}$, so inequality \eqref{e:B3 L2 u} is a direct consequence of the almost equivalence inequality \eqref{e:almost equiv}.
\hfill\\
(ii) Applying part (ii) of lemma \ref{l:Hardy} and inequality \eqref{e:B3 L2 u}, we have
\[
    \left\|\frac{\dx^k v + y \dx^{k+1} U}{1+y} \right\|_{L^2}
    \le 2 \| \dx^{k+1} (u-U)\|_{L^2}
    \le \constC \{\|\w\|_{H^{s,\gamma}_g} + \|\dx^s U\|_{L^2} \}
\]
which is inequality \eqref{e:B3 L2 v}.
\hfill\\
(iii) Inequality \eqref{e:B3 L2 w} follows directly from the definition of
$\|\w\|_{H^{s,\gamma}}$ and inequality \eqref{e:almost equiv}.
\hfill\\
(iv) Since $\w\in\Hs$, we know that $\|(1+y) \frac{\dy w}{\w}\|_{L^\infty} \le \delta^{-2}$,
so using triangle inequality, inequalities \eqref{e:B3 L2 u} and \eqref{e:B3 L2 w}, we have
\[
    \|(1+y)^\gamma g_k\|_{L^2}
    \le \|(1+y)^\gamma \dx^k \w\|_{L^2} + \delta^{-2} \|(1+y)^{\gamma-1} \dx^k
    (u-U)\|_{L^2}
    \le \constC \{ \|\w\|_{H^{s,\gamma}_g} + \|\dx^s U\|_{L^2}\}
\]
which is inequality \eqref{e:B3 L2 gk} for $k=1,2,\cdots, s-1$. When $k=s$, the better upper bound in \eqref{e:B3 L2 gk} follows directly from the definition of $\|\w\|_{H^{s,\gamma}_g}$.
\hfill\\
(v) For any $k=1,2,\cdots, s-1$, applying lemma \ref{l:Sobolev}, inequalities
\eqref{e:B3 L2 u} and \eqref{e:B3 L2 w}, we have
\begin{align*}
    \|\dx^k (u-U)\|_{L^\infty}
    & \le C \{\|\dx^k (u-U)\|_{L^2} + \|\dx^{k+1} (u-U)\|_{L^2} + \|\dx^k\dy \w\|_{L^2}\}\\
    & \le \constC \{\|\w\|_{H^{s,\gamma}_g} + \|\dx^s U\|_{L^2} \},
\end{align*}
and hence, by the triangle inequality and $\|\dx^k U\|_{L^\infty} \le C_s \|\dx^s U\|_{L^2}$,
we justify \eqref{e:B3 Linfty u}.

For the case $k=0$, let us first recall from the hypothesis that $\w:=\dy u>0$, so
%
\begin{equation} \label{e:recall w}
    0\le u \le U = \int^{+\infty}_0 \w\,dy.
\end{equation}
Thus, using Cauchy-Schwarz's inequality and estimate \eqref{e:B3 L2 w}, we have
\[
    \|U\|_{L^2}^2
     = \int_\T \left| \int^{+\infty}_0 \w\,dy \right|^2\,dx
     \le \frac{1}{2\gamma-1} \|(1+y)^\gamma \w\|_{L^2}^2
     \le \frac{1}{2\gamma-1} \|\w\|_{H^{s,\gamma}_g}^2,
\]
and hence, by \eqref{e:recall w}, Sobolev inequality and
$\|\dx U\|_{L^2} \le C_s \|\dx^s U\|_{L^2}$, we obtain
\[
    \|u\|_{L^\infty}
     \le \|U\|_{L^\infty} \le C\{\|U\|_{L^2} + \|\dx U\|_{L^2}\}
     \le \constC \{\|\w\|_{H^{s,\gamma}_g} + \|\dx^s U\|_{L^2}\}
\]
which is inequality \eqref{e:B3 Linfty u}.
\hfill\\
(vi) Applying triangle inequality, lemma \ref{l:Sobolev},
$\dx u + \dy v = 0$ and $\w = \dy u$, we have
\begin{align*}
    \left\|\frac{\dx^k v}{1+y}\right\|_{L^\infty}
    & \le \left\| \frac{y \dx^{k+1} U}{1+y} \right\|_{L^\infty}
    + \left\| \frac{\dx^k v + y \dx^{k+1} U}{1+y} \right\|_{L^\infty}\\
    & \le C \Big\{ \| \dx^{k+1} U\|_{L^2 (\T)} + \|\dx^{k+2} U\|_{L^2 (\T)}
    + \left\| \frac{\dx^k v + y \dx^{k+1} U}{1+y} \right\|_{L^2}
    + \left\| \frac{\dx^{k+1} v + y \dx^{k+2} U}{1+y} \right\|_{L^2}\\
    & \qquad +  \left\| \frac{\dx^k v + y \dx^{k+1} U}{(1+y)^3} \right\|_{L^2}
    +  \left\| \frac{\dx^{k+1} (u-U)}{(1+y)^2} \right\|_{L^2}
    +  \left\| \frac{\dx^{k+1} \w}{1+y} \right\|_{L^2} \Big\}
\end{align*}
which implies inequality \eqref{e:B3 Linfty v} because of Wirtinger's inequality
and \eqref{e:B3 L2 u} - \eqref{e:B3 L2 w}.
\hfill\\
(vii) Inequality \eqref{e:B3 Linfty w} follows directly from lemma \ref{l:Sobolev} and
inequality \eqref{e:B3 L2 w}.

\end{proof}

%
%
\subsection{Estimates for Functions Vanishing at Infinity}\label{ss:est vanish infty}
In this subsection we will first define certain weighted norms involving time and
spatial derivatives. Then we will state two basic inequalities about these norms, see
proposition \ref{p:basic ineq weighted} below. Finally, in proposition \ref{p:cntl uv} we will
control the weighted norms of $u$ and $v$ by that of $\w$ provided that
$u-U$ and its derivatives vanish at $y = +\infty$. The vanishing hypotheses
(i.e., the decay rates) are usually guaranteed by proposition \ref{p:decay Hs+3}
in applications.

Let us begin by defining the weighted norms.
%
\begin{defn} \label{defn:weighted norms}
For any $s' \in \N$ and $\gamma \in \R$, we define
\[\begin{aligned}
    \||\cdot|\|^2_{s',\gamma} & := \sum^{[\frac{s'}{2}]}_{l=0} \| \dt^l
    \cdot\|^2_{H^{s'-2l,\gamma} (\TR)}, \qquad
    \||\cdot|\|^2_{s'} := \sum^{[\frac{s'}{2}]}_{l=0} \|\dt^l \cdot
    \|^2_{H^{s'-2l}(\T)}, \\
    \||\cdot|\|^2_{s',\infty,\gamma} & := \sum^{[\frac{s'}{2}]}_{l=0}
    \sum_{|\alpha| \le s-2l} \|(1+y)^{\gamma + \alpha_2} \dt^l \Dalpha \cdot
    \|^2_{L^\infty (\TR)} \qquad \text{ and }\\
    \||\cdot|\|^2_{s',\infty} & := \sum^{[\frac{s'}{2}]}_{l=0}
    \| \dt^l \cdot \|^2_{W^{s'-2l,\infty} (\T)}
\end{aligned}\]
where $[\frac{s'}{2}]$ denotes the largest integer which is less than or
equal to $\frac{s'}{2}$.
\end{defn}

Using H\"{o}lder inequality and Sobolev inequality, one can easily show the
following
%
\begin{prop}[Basic Inequalities for Weighted Norms] \label{p:basic ineq weighted}\hfill
\begin{itemize}
\item[(i)] For any $s' \in \N$ and $\gamma, \gamma_1, \gamma_2 \in \R$ with $\gamma
= \gamma_1 + \gamma_2$,
\[
    \||F_1 F_2|\|_{s',\gamma} \le C_{s'} \|| F_1 |\|_{s',\infty,\gamma_1}
    \||F_2|\|_{s',\gamma_2}.
\]
\item[(ii)] For any $s' \ge 5$ and $\gamma, \gamma_1, \gamma_2 \in \R$ with $\gamma
= \gamma_1 +\gamma_2$,
\[
    \||F_1 F_2|\|_{s',\gamma} \le C_{s', \gamma_1, \gamma_2} \{\|| F_1 |\|_{s'-1, \gamma_1}
    \||F_2|\|_{s',\gamma_2} + \|| F_1 |\|_{s', \gamma_1} \||F_2|\|_{s'-1,\gamma_2} \}.
\]
\end{itemize}
\end{prop}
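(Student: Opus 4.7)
The plan is to derive both product estimates from the Leibniz rule for mixed space-time derivatives, a weight splitting, and H\"older's inequality; part (ii) additionally invokes the Sobolev embedding of lemma \ref{l:Sobolev}. Expanding
\[
    \dt^l \Dalpha (F_1 F_2) = \sum_{l_1 \le l}\sum_{\beta \le \alpha}
    \binom{l}{l_1}\binom{\alpha}{\beta}
    \bigl(\dt^{l_1} D^\beta F_1\bigr)\bigl(\dt^{l-l_1} D^{\alpha-\beta} F_2\bigr)
\]
and factoring the weight as $(1+y)^{\gamma+\alpha_2} = (1+y)^{\gamma_1+\beta_2}(1+y)^{\gamma_2+(\alpha-\beta)_2}$, I would estimate each Leibniz term by H\"older in $L^\infty \cdot L^2$.

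For part (i), I would place the $F_1$-factor in $L^\infty$ and the $F_2$-factor in weighted $L^2$ for every term. Admissibility of the $L^\infty$-norm inside $\||F_1|\|_{s',\infty,\gamma_1}$ follows from $l_1 \le l \le [s'/2]$ together with $|\beta| + 2l_1 \le |\alpha| + 2l \le s'$; admissibility of the weighted $L^2$-norm inside $\||F_2|\|_{s',\gamma_2}$ is analogous. Summing over the finitely many multi-indices $(l_1, \beta)$ produces the constant $C_{s'}$.

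For part (ii), set $A := 2l_1 + |\beta|$ and $B := 2(l-l_1) + |\alpha-\beta|$, so $A + B = 2l + |\alpha| \le s'$ and hence $\min(A,B) \le s'/2$. Since $s' \ge 5$, I claim $\min(A,B) \le s' - 3$: for $s' \ge 6$ this follows from $s'/2 \le s'-3$, and for $s' = 5$ one checks by hand that $\min(A,B) \le 2 = s'-3$. When $A \le s'-3$, I would apply lemma \ref{l:Sobolev} to $h := (1+y)^{\gamma_1+\beta_2}\dt^{l_1} D^\beta F_1$ to bound $\|h\|_{L^\infty}$ by $\|h\|_{L^2} + \|\dx h\|_{L^2} + \|\dyy h\|_{L^2}$. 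Expanding $\dyy h$ through Leibniz shifts up to two $y$-derivatives onto $F_1$; because $2l_1 + (|\beta|+2) \le s' - 1$ and $l_1 \le [(s'-1)/2]$, these weighted $L^2$-norms are all captured by $\||F_1|\|_{s'-1,\gamma_1}$. Pairing with the $L^2$-factor of the $F_2$-piece, controlled by $\||F_2|\|_{s',\gamma_2}$, yields the first summand of the claimed bound; in the complementary case $B \le s'-3$, the symmetric argument yields the second summand.

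The main technical nuisance will be the bookkeeping of $y$-weights during the Sobolev step in part (ii): differentiating $(1+y)^{\gamma_1+\beta_2}$ produces lower-order weights $(1+y)^{\gamma_1+\beta_2-1}$ and $(1+y)^{\gamma_1+\beta_2-2}$, which must be compared with the weights appearing in $\||F_1|\|_{s'-1,\gamma_1}$. This comparison is routine but is the reason why the constant in (ii) depends on $\gamma_1$ and $\gamma_2$ rather than on $s'$ alone; no further subtlety is expected.
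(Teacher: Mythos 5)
Your proposal is correct and follows exactly the route the paper intends: the paper offers no written proof, stating only that the result follows ``using H\"older inequality and Sobolev inequality,'' and your Leibniz expansion with the exact weight splitting $(1+y)^{\gamma+\alpha_2}=(1+y)^{\gamma_1+\beta_2}(1+y)^{\gamma_2+(\alpha-\beta)_2}$, the $L^\infty\cdot L^2$ H\"older pairing for (i), and the Sobolev step of lemma \ref{l:Sobolev} applied to the factor with $\min(A,B)\le s'-3$ for (ii) is precisely that argument. Your accounting of why $s'\ge 5$ is needed (the two extra $y$-derivatives in \eqref{e:Sobolev} force $\min(A,B)\le s'-3$, which can fail at $s'=4$) and of why the constant in (ii) picks up $\gamma_1,\gamma_2$ is accurate.
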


Next, we will state the weighted controls on $u$ and $v$ as follows.
%
%
\begin{prop}[Weighted Controls on $u$ and $v$] \label{p:cntl uv}
For any $s' \ge 4$ and $\gamma \ge 1$, let the vector field $(u,v)$ defined
on $\TR$ satisfying the incompressibility condition $\dx u + \dy v = 0$, the
Dirichlet boundary condition $v|_{y=0} = 0$ and $\lim_{y \to + \infty}
\dt^l \dx^k u = \dt^l \dx^k U$ for all $l=0,1,\cdots, [\frac{s'}{2}]$ and
$k = 0,1,\cdots, s'-2l+1$. Denote the vorticity $\w := \dy u$. Then there
exists a universal constant $C > 0$ such that
%
%
\begin{equation} \label{e:norm u-U 0}
    \||u-U|\|_{s',0} \le C \||\w|\|_{s',\gamma} \qquad \text{ and }
    \qquad  \|| v+ y \dx U|\|_{s',-1} \le C \|| \dx \w|\|_{s',\gamma}.
\end{equation}
\end{prop}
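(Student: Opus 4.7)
The plan is to derive both estimates for each fixed multi-index $(l,\alpha)$ with $l \le [s'/2]$ and $|\alpha| \le s'-2l$, by using the identities $\dy(u-U) = \w$ and $\dy(v + y\dx U) = -\dx(u-U)$ together with Hardy's inequality (Lemma \ref{l:Hardy}) to absorb the $y$-weights. The fact that $\gamma \ge 1$ is exactly what allows the weight in $\||\w|\|_{s',\gamma}$ to dominate the weights required on the left-hand sides.

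For the first inequality I would split into two cases according to the number $\alpha_2$ of $y$-derivatives. If $\alpha_2 \ge 1$, then $\dt^l\Dalpha(u-U) = \dt^l\dx^{\alpha_1}\dy^{\alpha_2-1}\w$ is already a derivative of $\w$ of total order $\le s'-2l-1$, and the target weight $(1+y)^{\alpha_2}$ is dominated by $(1+y)^{\gamma+\alpha_2-1}$ since $\gamma \ge 1$, so this contribution is controlled by $\||\w|\|_{s',\gamma}$. If $\alpha_2 = 0$, the hypothesis $\lim_{y\to\infty}\dt^l\dx^{\alpha_1}(u-U)=0$ permits Lemma \ref{l:Hardy}(i) with $\lambda=0$, giving
\begin{equation*}
  \|\dt^l\dx^{\alpha_1}(u-U)\|_{L^2} \le 2\|(1+y)\dt^l\dx^{\alpha_1}\w\|_{L^2} \le 2\|(1+y)^\gamma \dt^l\dx^{\alpha_1}\w\|_{L^2},
\end{equation*}
again using $\gamma \ge 1$. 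Summing over $(l,\alpha)$ yields the first bound in \eqref{e:norm u-U 0}.

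For the second inequality I would first use $\dx u = -\dy v$ and $v|_{y=0}=0$ to write
\begin{equation*}
  v + y\,\dx U = -\int_0^y \dx(u-U)(t,x,\tilde y)\,d\tilde y,
\end{equation*}
so that $\dy(v + y\dx U) = -\dx(u-U)$ and $(v + y\dx U)|_{y=0}=0$. When $\alpha_2 \ge 2$, one reduces $\dt^l\Dalpha(v+y\dx U) = -\dt^l\dx^{\alpha_1+1}\dy^{\alpha_2-2}\w$, a derivative of $\dx\w$ of total order $\le s'-2l-2$, and the weight $(1+y)^{\alpha_2-1}$ is dominated by $(1+y)^{\gamma+\alpha_2-2}$ for $\gamma \ge 1$, so this is controlled by $\||\dx\w|\|_{s',\gamma}$. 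When $\alpha_2=1$, stripping the $\dy$ gives $-\dt^l\dx^{\alpha_1+1}(u-U)$, which is handled as in the $\alpha_2=0$ case of the first estimate (Hardy(i) followed by $\gamma\ge 1$).

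The main obstacle, and the reason the second estimate is stated in the slightly weaker norm $\||\cdot|\|_{s',-1}$ rather than $\||\cdot|\|_{s',0}$, is the case $\alpha_2 = 0$: the quantity $v + y\dx U$ tends to $-\int_0^\infty \dx(u-U)\,d\tilde y$ as $y\to\infty$, which need not vanish. Here I would invoke Lemma \ref{l:Hardy}(ii) with $\lambda = -1$ applied to $f := \dt^l\dx^{\alpha_1}(v + y\dx U)$:
\begin{equation*}
  \|(1+y)^{-1}f\|_{L^2(\TR)} \le \|f|_{y=0}\|_{L^2(\T)} + 2\|\dy f\|_{L^2(\TR)}.
\end{equation*}
The boundary trace vanishes thanks to $v|_{y=0}=0$, and $\dy f = -\dt^l\dx^{\alpha_1+1}(u-U)$ can be bounded, via Lemma \ref{l:Hardy}(i) applied in the $y$-variable using the hypothesis $\lim_{y\to\infty}\dt^l\dx^{\alpha_1+1}u = \dt^l\dx^{\alpha_1+1}U$ (which is exactly why the assumption at infinity is stated up to order $s'-2l+1$, one higher than for the first estimate), by
\begin{equation*}
  \|\dt^l\dx^{\alpha_1+1}(u-U)\|_{L^2} \le 2\|(1+y)\dt^l\dx^{\alpha_1+1}\w\|_{L^2} \le 2\|(1+y)^\gamma \dt^l\dx^{\alpha_1+1}\w\|_{L^2}.
\end{equation*}
Summing over $(l,\alpha)$ completes the proof of the second bound in \eqref{e:norm u-U 0}.
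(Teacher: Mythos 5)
Your proposal is correct and follows essentially the same route as the paper's (very terse) proof: apply Lemma \ref{l:Hardy} to $\dt^l\dx^k(u-U)$ and $\dt^l\dx^k(v+y\dx U)$ to get the estimates with weight exponent $1$, reduce the mixed derivatives directly to derivatives of $\w$ or $\dx\w$, and conclude using $\gamma\ge 1$. Your case analysis, the vanishing of the trace of $v+y\dx U$ at $y=0$, and the observation that the decay hypothesis must hold up to order $s'-2l+1$ for the $\alpha_2=0$ case of the second bound are exactly the details the paper leaves implicit.
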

\begin{proof}[Outline of the proof]
The hypotheses of proposition \ref{p:cntl uv} allow us to apply lemma \ref{l:Hardy} to
$\dt^l \dx^k (u-U)$ and $\dt^l \dx^k (v+y \dx U)$ provided that $2l + k \le s'$,
so we obtain $\|| u-U|\|_{s',0} \le C\||\w|\|_{s',1}$ and $\||v+y \dx U|\|_{s',-1} \le
C \|| \dx \w|\|_{s',1}$ which imply \eqref{e:norm u-U 0} since $\gamma \ge 1$.
\end{proof}

%
%
\section{Decay Rates for $\Hs$ Functions}\label{s:appendixC}
The aim of this appendix is to prove that the actual pointwise decay rates of $\Hs$
functions at $y = + \infty$ are better than the decay rates obtained by the Sobolev
embeddings. The proof relies on a pointwise interpolation argument (see lemma
\ref{l:ptwise interpolation} below), which is a direct
consequence of the Taylor's series expansion.

More specifically, we will prove the decay property of $D^\alpha \w$ as $y$ goes to
$+ \infty$ as follows.
%
\begin{prop}[Decay Rates for $\Hs$ Functions]\label{p:decay Hs+3}
Let $s' \ge 4$ be an integer, $\gamma \ge 1, \sigma > \gamma + \frac{1}{2}$ and $\delta \in
(0,1)$. If $\w \in H^{s'+4,\gamma}_{\sigma, \delta}$, then $\w$ is $s'+2$ times differentiable and there exists a
constant $\constCw > 0$ such that for all $|\alpha| \le s'+2$,
%
\begin{equation}\label{e:decay Hs+3}
    |D^\alpha \w| \le \constCw (1+y)^{-b_\alpha} \qquad\qquad \text{ in } \TR
\end{equation}
where the exponent
%
\begin{equation}\label{e:decay Hs+3 e}
    b_\alpha :=
    \begin{cases}
        \sigma + \alpha_2 & \text{ if } |\alpha| \le 2\\
        \frac{\sigma + (2^{|\alpha|-2}-1) \gamma}{2^{|\alpha|-2}} + \alpha_2 & \text{ if }
        2 \le |\alpha| \le s'+1\\
        \gamma + \alpha_2 & \text{ if } |\alpha| = s'+2.
    \end{cases}
\end{equation}
\end{prop}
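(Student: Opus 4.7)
The plan is to establish \eqref{e:decay Hs+3} by induction on $k=|\alpha|$, using as initial input the pointwise $L^\infty$ bound $|D^\alpha\w|\le\delta^{-1}(1+y)^{-\sigma-\alpha_2}$ for $|\alpha|\le 2$ which is built directly into the definition of $\Hs$ (and is precisely the first case of \eqref{e:decay Hs+3 e}). The inductive engine combines two ingredients: a \emph{baseline} pointwise decay $|D^\alpha\w|\le\constCw(1+y)^{-\gamma-\alpha_2}$ valid uniformly for $|\alpha|\le s'+2$, and a one-variable Taylor-interpolation inequality that halves the gap between the baseline exponent $\gamma+\alpha_2$ and the strong exponent $\sigma+\alpha_2$ at each differentiation.

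For the baseline, I would apply the Sobolev-type inequality \eqref{e:Sobolev} to the weighted function $g_\alpha:=(1+y)^{\gamma+\alpha_2}D^\alpha\w$. Expanding $\dx g_\alpha$ and $\dyy g_\alpha$ by the Leibniz rule, the three $L^2$-norms demanded by \eqref{e:Sobolev} reduce, after a short weight check, to weighted $L^2$-norms of $D^\beta\w$ for $|\beta|\le|\alpha|+2\le s'+4$ with weights no stronger than those built into $\|\w\|_{H^{s'+4,\gamma}}$; the most demanding term is $(1+y)^{\gamma+\alpha_2}D^{\alpha+2e_2}\w$, controlled because $|\alpha+2e_2|\le s'+4$. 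This yields $\|g_\alpha\|_{L^\infty}\le\constCw$, which simultaneously makes $\w$ of class $C^{s'+2}$ and confirms the third case $|\alpha|=s'+2$ of \eqref{e:decay Hs+3 e}.

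The pointwise interpolation lemma powering the induction is Taylor's formula with integral remainder: for $f\in C^2([a,a+h])$,
\[ |f'(a)| \le \tfrac{2}{h}\sup_{[a,a+h]}|f| + \tfrac{h}{2}\sup_{[a,a+h]}|f''|, \]
and optimizing in $h$ gives $|f'(a)|\le 2\sqrt{\sup|f|\cdot\sup|f''|}$. Assume \eqref{e:decay Hs+3} for all multi-indices of order $\le k$ with $k\in\{2,\dots,s'\}$. Given $\alpha'$ with $|\alpha'|=k+1$, decompose $\alpha'=\beta+e_i$ and apply the lemma to $D^\beta\w$ in the $i$-th variable. The inductive hypothesis supplies $\sup|D^\beta\w|\lesssim(1+y)^{-b_\beta}$, and the baseline supplies $\sup|D^{\beta+2e_i}\w|\lesssim(1+y)^{-\gamma-(\beta+2e_i)_2}$ (valid since $|\beta+2e_i|=k+2\le s'+2$). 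The lemma then gives $|D^{\alpha'}\w|\lesssim(1+y)^{-\tfrac{1}{2}(b_\beta+\gamma+(\beta+2e_i)_2)}$; rewriting $b_\beta=\gamma+\beta_2+2^{-(k-2)}(\sigma-\gamma)$, a short computation confirms $\tfrac{1}{2}(b_\beta+\gamma+(\beta+2e_i)_2)=b_{\alpha'}$, closing the induction.

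The main technical wrinkle is a bookkeeping one: when $i=1$ the interpolation is in the periodic variable $x\in\T$, so the Taylor interval length is constrained to at most $1/2$. Since $\sigma>\gamma+\tfrac{1}{2}$ forces $b_\beta>\gamma+\beta_2$, the optimal $h$ scales like $(1+y)^{-(b_\beta-\gamma-\beta_2)/2}$ and is automatically $\le 1/2$ for $y$ large; for $y$ bounded, the desired inequality collapses to an $L^\infty$ bound on a compact set, already supplied by the baseline. Aside from this and the exponent arithmetic that matches the recursion $b_{\alpha'}=\tfrac{1}{2}(b_\beta+\gamma+(\beta+2e_i)_2)$ to the closed form in \eqref{e:decay Hs+3 e}, the argument is routine.
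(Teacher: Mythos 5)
Your proposal is correct and follows essentially the same route as the paper: a Sobolev-embedding baseline giving $|D^\alpha\w|\lesssim(1+y)^{-\gamma-\alpha_2}$ up to order $s'+2$ (remark \ref{r:decay sobolev}), the Taylor-optimization interpolation inequality $|f'|\le 2\sqrt{\sup|f|\cdot\sup|f''|}$ (lemma \ref{l:ptwise interpolation}), and an induction on $|\alpha|$ whose exponent recursion $b_{\beta+e_i}=\tfrac12\bigl(b_\beta+\gamma+(\beta+2e_i)_2\bigr)$ reproduces \eqref{e:decay Hs+3 e}. The periodicity caveat you raise for the $x$-direction is actually moot (a $\T$-periodic function extends to all of $\R$, so the Taylor step size is unconstrained), but your handling of it is harmless.
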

%
\begin{rem}[Decay Rates from Sobolev Embeddings] \label{r:decay sobolev}
Using the standard Sobolev embedding argument and the definition of
$H^{s'+4, \gamma}_{\sigma, \delta}$, one may prove that if
$\w \in H^{s'+4, \gamma}_{\sigma, \delta}$, then $\w$ is $s'+2$ times differentiable
and there exists a constant $C_{s,\gamma} > 0$ such that
%
\begin{equation}\label{e:Hs+3 sobolev}
    |D^\alpha \w| \le
    \begin{cases}
        \delta^{-1} (1+y)^{-\sigma-\alpha_2} & \text{ if } |\alpha| \le 2\\
        C_{s,\gamma} \|\w\|_{H^{s'+4,\gamma}} (1+y)^{-\gamma-\alpha_2} & \text{ if }
        2 \le |\alpha| \le s'+2.
    \end{cases}
\end{equation}
Thus, the interesting part of proposition \ref{p:decay Hs+3} is the decay rate of
\eqref{e:decay Hs+3} is better than that of \eqref{e:Hs+3 sobolev}. This
slightly better pointwise decay will help us to deal with the boundary terms at $y = + \infty$
while we are integrating by parts in the $y$-direction (cf remark \ref{r:bdry infty}).

\end{rem}
%
\begin{proof}[Proof of proposition \ref{p:decay Hs+3}]
According to remark \ref{r:decay sobolev}, we are only required to justify the inequality
\eqref{e:decay Hs+3} with the decay rate defined in \eqref{e:decay Hs+3 e}.

First of all, let us state without proof the following calculus lemma.
%
\begin{lem}[Pointwise Interpolation]\label{l:ptwise interpolation}
Let $f :\TR \to \R$ be a twice differentiable function. Then we have the following:
\begin{itemize}
\item[(i)] If there exist constants $C_0, C_2, b_0$ and $b_2$ such that
$|\dx^i f|  \le C_i (1+y)^{-b_i}$ for all $i = 0, 2$, then
\[
    |\dx f| \le 2 \sqrt{C_0C_2} (1+y)^{-\frac{b_0+b_2}{2}} \qquad\qquad \text{ in } \TR.
\]
\item[(ii)] If there exist non-negative constants $C_0, C_2, b_0$ and $b_2$ such that
$|\dy^i f|  \le C_i (1+y)^{-b_i}$ for all $i = 0, 2$, then
\[
    |\dy f| \le 2 \sqrt{C_0C_2} (1+y)^{-\frac{b_0+b_2}{2}} \qquad\qquad \text{ in } \TR.
\]
\end{itemize}
\end{lem}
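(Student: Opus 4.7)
The plan is to derive both inequalities from the classical Landau--Kolmogorov interpolation trick, applied pointwise at each $(x,y) \in \TR$ and then optimized in a free parameter. For (i), I would fix the point and introduce a shift $h \ne 0$, then apply Taylor's formula with integral remainder in the $x$-variable, namely
\begin{equation*}
    f(x+h,y) = f(x,y) + h\,\dx f(x,y) + \int_x^{x+h}(x+h-s)\,\dx^2 f(s,y)\,ds.
\end{equation*}
Solving for $\dx f(x,y)$, taking absolute values, and inserting the hypothesized pointwise bounds (with $y$ held fixed throughout, so that each weight $(1+y)^{-b_i}$ factors cleanly out of every term) reduces the problem to the one-variable inequality
\begin{equation*}
    |\dx f(x,y)| \le \frac{2 C_0}{|h|}\,(1+y)^{-b_0} + \frac{|h|}{2}\,C_2\,(1+y)^{-b_2}.
\end{equation*}
Minimizing the right-hand side over $|h|>0$ gives the optimal choice $|h| = 2\sqrt{C_0/C_2}\,(1+y)^{(b_2-b_0)/2}$, which when substituted back in produces the advertised bound $|\dx f| \le 2\sqrt{C_0 C_2}\,(1+y)^{-(b_0+b_2)/2}$. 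Degenerate cases ($C_0 = 0$ or $C_2 = 0$) are settled by a limiting argument in $|h|$ or by direct inspection.

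For (ii) the plan is structurally identical, but with one essential modification: because $y \in [0,+\infty)$ is a half-line variable, the Taylor expansion in $y$ must be performed only in the positive direction, i.e.\ at $(x,y+h)$ with $h>0$. This is exactly where the non-negativity of $b_0$ and $b_2$ enters: it guarantees the monotonicity $(1+y+h)^{-b_0} \le (1+y)^{-b_0}$ and $(1+t)^{-b_2} \le (1+y)^{-b_2}$ for $t \in [y,y+h]$, which is what allows me to pull the $(1+y)$-weights outside both the boundary and integral terms of the remainder formula. After that step, the same scalar optimization as in (i) yields the conclusion.

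I do not anticipate any real obstacle. The only point that needs care is the one-sided shift in (ii), which makes transparent the role of the sign hypothesis on $b_0, b_2$; apart from that, the proof is a direct application of a standard Taylor identity followed by a routine one-dimensional minimization.
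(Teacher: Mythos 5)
Your proposal is correct and follows exactly the route the paper indicates: the paper omits the proof, remarking only that it rests on the standard Taylor expansion technique, and your argument (Taylor's formula with integral remainder, factoring out the weights --- using the one-sided shift in $y$ together with $b_0,b_2\ge 0$ for part (ii) --- and then optimizing over the step size $|h|$) is precisely that argument, with the correct optimal choice $|h|=2\sqrt{C_0/C_2}\,(1+y)^{(b_2-b_0)/2}$ yielding the stated constant $2\sqrt{C_0C_2}$. No gaps.
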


The proof of lemma \ref{l:ptwise interpolation} is based on the standard Taylor's series
expansion technique, and will be omitted here.

Now, applying lemma \ref{l:ptwise interpolation} to $D^\alpha \w$ inductively on $|\alpha|
= 3, 4, \cdots, s'+1$ with the inequality \eqref{e:Hs+3 sobolev}, we prove \eqref{e:decay Hs+3}
with the exponent $b_\alpha$ defined in \eqref{e:decay Hs+3 e}.
\end{proof}
%
\begin{rem}[Further Improvement on the Decay Rate]
The decay rate $b_\alpha$ defined in \eqref{e:decay Hs+3 e} is obviously not optimal
because one can apply the pointwise interpolation lemma \ref{l:ptwise interpolation}
again to further improve it. However, we do not intend to optimize it here. Indeed,
repeatedly applying the pointwise interpolation lemma \ref{l:ptwise interpolation}, one may
improve the decay rate $b_\alpha$ as
\[
    b_\alpha :=
    \begin{cases}
        \sigma + \alpha_2 & \text{ if } |\alpha| \le 2\\
        \frac{(s'+2-|\alpha|) \sigma + (|\alpha| - 2) \gamma}{s'} + \alpha_2^- &
        \text{ if } 3 \le |\alpha| \le s'+1\\
        \gamma + \alpha_2 & \text{ if } |\alpha| = s'+2.
    \end{cases}
\]
We leave this proof for the interested reader.
\end{rem}

%
%
\section{Equations for $\aep$ and $\gep_s$}\label{s:appendixD}
In this appendix we will derive the evolution equations for $\aep := \frac{\dy \wep}{\wep}$
and $\gep_s := \dx^s \wep - \aep \dx^s (\uep -U)$ provided that $\wep > 0,
(\uep, \vep, \wep)$ and $(\pep, U)$ satisfy \eqref{e:RPE} - \eqref{e:Ruv}. These
derivations just follow from direct computations.
\\
\\
\underline{Equation for $\aep$:}

Differentiating the vorticity equation $\eqref{e:Rvort}_1$ with respect to $y$ once,
we obtain
%
\begin{equation}\label{e:dy wep}
    (\dt + \uep\dx + \vep\dy) \dy \wep = \e^2 \dxx \dy \wep + \dy^3 \wep - \wep
    \dx \wep + \dx\uep \dy\wep.
\end{equation}
Using \eqref{e:dy wep} and the vorticity equation $\eqref{e:Rvort}_1$, we can compute
%
\begin{align}
    (\dt + \uep\dx+ \vep \dy) \aep \notag
    & = \;\frac{(\dt + \uep \dx + \vep \dy) \dy \wep}{\wep} -
    \frac{\dy \wep (\dt + \uep \dx + \vep \dy)\wep}{(\wep)^2} \notag\\
    & = \;\e^2 \left\{ \frac{\dxx \dy \wep}{\wep} -\aep \frac{\dxx \wep}{\wep} \right\}
    + \left\{ \frac{\dy^3 \wep}{\wep} - \aep \frac{\dyy \wep}{\wep} \right\}
    - \dx \wep + \aep \dx \uep. \label{e:d aep}
\end{align}
On the other hand, by direct differentiations only, one may check that
%
\begin{equation} \label{e:dxx dyy aep}
\left\{\begin{aligned}
    \dxx \aep & = \frac{\dxx\dy\wep}{\wep} - \aep \frac{\dxx \wep}{\wep}
    - 2 \frac{\dx \wep}{\wep} \dx\aep\\
    \dyy \aep & = \frac{\dy^3\wep}{\wep} - \aep \frac{\dyy \wep}{\wep}
    - 2 \aep \dy\aep.
\end{aligned}\right.
\end{equation}
Substituting \eqref{e:dxx dyy aep} into \eqref{e:d aep}, we obtain an equation for
$\aep$:
%
\begin{equation} \label{e:aep}
    (\dt+\uep\dx+\vep\dy-\e^2\dxx-\dyy)\aep
    = 2\e^2 \frac{\dx\wep}{\wep}\dx\aep + 2 \aep\dy \aep - \gep_1 +\aep\dx U,
\end{equation}
where $\gep_1 := \dx\wep - \aep \dx (\uep-U)$.
\\
\\
\underline{Equation for $\gep_s$:} (Derivation of equation \eqref{e:gs evlt})

Differentiating the evolution equations for $\wep$ and $\uep-U$ (i.e., equations
\eqref{e:evlt eqn}) with respect to $x\;s$ times, we have
%
\begin{equation} \label{e:d evlt s}
\left\{\begin{aligned}
    &(\dt + \uep\dx +\vep\dy - \e^2 \dxx-\dyy) \dx^s \wep + \dx^s \vep \dy\wep\\
    &\qquad \qquad = - \sum^{s-1}_{j=0} \binom{s}{j} \dx^{s-j} \uep \dx^{j+1} \wep
    - \sum^{s-1}_{j=1} \binom{s}{j} \dx^{s-j} \vep \dx^j \dy \wep\\
    &(\dt + \uep\dx +\vep\dy - \e^2 \dxx-\dyy) \dx^s (\uep-U) + \dx^s \vep \wep\\
    &\qquad \qquad = - \sum^{s-1}_{j=0} \binom{s}{j} \dx^{s-j} \uep \dx^{j+1} (\uep-U)
    - \sum^{s-1}_{j=1} \binom{s}{j} \dx^{s-j} \vep \dx^j  \wep\\
    & \qquad\qquad\qquad - \sum^s_{j=0} \binom{s}{j} \dx^j (\uep-U) \dx^{s-j+1} U.
\end{aligned} \right.
\end{equation}
To eliminate the problematic term $\dx^s \vep$, we subtract $\aep \times
\eqref{e:d evlt s}_2$ from $\eqref{e:d evlt s}_1$, and obtain
%
\begin{equation} \label{e:eliminate dxsvep}
\begin{split}
    & (\dt+\uep\dx+\vep\dy-\e^2\dxx-\dyy) \gep_s + \{(\dt+\uep\dx+\vep\dy-\e^2\dxx-\dyy)
    \aep\} \dx^s (\uep-U)\\
    = & \; 2 \e^2 \dx^{s+1}(\uep-U) \dx \aep + 2 \dx^s \wep \dy \aep - \sum^{s-1}_{j=0} \binom{s}{j}
    \gep_{j+1} \dx^{s-j} \uep \\
    & \; - \sum^{s-1}_{j=1} \binom{s}{j} \dx^{s-j} \vep \{\dx^j \dy\wep-\aep\dx^j\wep\}
    + \aep \sum^s_{j=0} \binom{s}{j} \dx^j (\uep-U) \dx^{s-j+1} U.
\end{split}
\end{equation}
Substituting \eqref{e:aep} into \eqref{e:eliminate dxsvep}, we obtain equation \eqref{e:gs evlt}.

%
%
\section{Classical Maximum Principles} \label{s:appendixE}
The main purpose of this appendix is to state two classical maximum principles,
which are useful in subsection \ref{ss:Linfty lower}, for parabolic equations.

The first lemma is the maximum principle for bounded solutions to parabolic
equations.
%
%
\begin{lem}[Maximum Principle for Parabolic Equations] \label{l:max parabolic}
Let $\e \ge 0$. If $H \in C([0,T]; C^2 (\TR) \cap C^1 ([0,T]; C^0 (\TR))$ is a
bounded function which satisfies the differential inequality:
\[
    \{\dt+b_1\dx+b_2\dy-\e^2\dxx - \dyy\} H \le f H \qquad\qquad \text{ in }
    [0,T] \times \TR
\]
where the coefficients $b_1, b_2$ and $f$ are continuous and satisfy
%
%
\begin{equation} \label{e:b2 f eqn}
    \left\| \frac{b_2}{1+y} \right\|_{L^\infty ([0,T] \times \TR)}  < + \infty
    \qquad \text{ and } \qquad \|f\|_{L^\infty ([0,T] \times \TR)}  \le \lambda,
\end{equation}
then for any $t \in [0,T]$,
%
\begin{equation} \label{e:sup H}
    \sup_{\TR} H(t) \le \max \{e^{\lambda t} \|H(0)\|_{L^\infty (\TR)},
    \max_{\tau \in [0,t]} \{e^{\lambda (t-\tau)} \|H(\tau) |_{y=0}
    \|_{L^\infty (\T)} \} \}.
\end{equation}
\end{lem}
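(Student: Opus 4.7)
The plan is to reduce to a classical interior maximum argument by introducing a time-exponential rescaling together with a barrier that dominates $H$ at infinity. First I would set $\bar H := e^{-\lambda t} H$, which converts the hypothesis into
\[
    \mathcal{L} \bar H + (\lambda - f) \bar H \le 0, \qquad \mathcal{L} := \dt + b_1 \dx + b_2 \dy - \e^2 \dxx - \dyy,
\]
where $\lambda - f \ge 0$ by \eqref{e:b2 f eqn}. For any fixed $t \in [0, T]$, define
\[
    N := \max\Big\{ \|H(0)\|_{L^\infty(\TR)},\, \max_{\tau \in [0, t]} e^{-\lambda \tau}\|H(\tau)|_{y=0}\|_{L^\infty(\T)} \Big\} \ge 0,
\]
so that the target inequality \eqref{e:sup H} is equivalent to $\bar H(t, \cdot, \cdot) \le N$ on $\TR$.

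The key is the barrier $\phi(\tau, y) := e^{K\tau}\bigl((1+y)^2 + 1\bigr)$ with $K := 2M + 3$ and $M := \|b_2/(1+y)\|_{L^\infty([0,T]\times\TR)}$. A direct computation gives
\[
    \mathcal{L}\phi = e^{K\tau}\bigl[K\bigl((1+y)^2 + 1\bigr) + 2 b_2 (1+y) - 2\bigr] \ge e^{K\tau}\bigl[(K - 2M)(1+y)^2 + K - 2\bigr] > 0,
\]
so $[\mathcal{L} + (\lambda - f)]\phi \ge \mathcal{L}\phi > 0$ since $\phi > 0$ and $\lambda - f \ge 0$. For any $\eta > 0$, set $W := \bar H - N - \eta \phi$. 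Using $[\mathcal{L} + (\lambda - f)] N = (\lambda - f) N \ge 0$, this yields
\[
    [\mathcal{L} + (\lambda - f)] W \le -\eta \,[\mathcal{L} + (\lambda - f)]\phi < 0 \qquad \text{in } [0, t] \times \TR.
\]

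Since $H$ is bounded while $\phi \to + \infty$ uniformly in $(\tau, x)$ as $y \to + \infty$, the supremum of $W$ over the slab $[0, t] \times \TR$ is attained at some point $(\tau_0, x_0, y_0)$. I would argue by contradiction that if this maximum is strictly positive, it cannot lie in the interior region $(0, t] \times \T \times (0, + \infty)$: at such an interior maximum the standard sign conditions $\dt W \ge 0$, $\dx W = \dy W = 0$, $\dxx W \le 0$, $\dyy W \le 0$, combined with $W > 0$ and $\lambda - f \ge 0$, force $[\mathcal{L} + (\lambda - f)] W \ge 0$, contradicting the strict inequality above. Hence the maximum must occur on the parabolic boundary $\{\tau = 0\} \cup \{y = 0\}$, where the choice of $N$ gives $W \le -\eta \phi \le 0$. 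Therefore $W \le 0$ throughout $[0, t] \times \TR$, which yields $\bar H \le N + \eta \phi$; sending $\eta \to 0^+$ gives $\bar H(t, x, y) \le N$, equivalently $H(t, x, y) \le e^{\lambda t} N$, which is exactly \eqref{e:sup H}.

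The main obstacle is to design a barrier that both grows fast enough in $y$ to force the supremum of $W$ to be attained (despite the unboundedness of $\TR$) and is compatible with the possibly linear growth of $b_2$. Quadratic growth in $y$ is the minimal polynomial rate for which the drift contribution $2 b_2 (1+y)$, bounded by $2 M (1+y)^2$, can be absorbed by $K (1+y)^2$ for $K$ large enough; a linear barrier would not dominate the drift, while a faster-growing one would only complicate the algebra. Once this compatibility is arranged, every remaining step is a direct analogue of the classical bounded-domain parabolic maximum principle.
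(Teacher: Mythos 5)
Your proof is correct and follows essentially the same strategy as the paper: rescale by $e^{-\lambda t}$, subtract a small multiple of a barrier growing in $y$ fast enough to dominate both the bounded function $H$ and the drift $b_2\partial_y$ (you use $\eta\, e^{K\tau}\bigl((1+y)^2+1\bigr)$ where the paper uses $\mu\ln(1+y)+\mu\|b_2/(1+y)\|_{L^\infty}\,t$), run the classical interior maximum-principle argument, and send the small parameter to zero. The only substantive difference is the choice of barrier, and your quadratic one, with the drift absorbed into the exponential time factor $e^{K\tau}$, is verified correctly.
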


The proof of lemma \ref{l:max parabolic} is a direct application of the classical
maximum principle. For the reader's convenience, we will outline its proof as
follows.
%
%
\begin{proof}[Outline of the proof of lemma \ref{l:max parabolic}]
For any $\mu > 0$, let us define $\mathcal{H} := e^{-\lambda t} H - \mu
\left\| \frac{b_2}{1+y} \right\|_{L^\infty} t - \mu \ln (1+y)$. Then one may
check that for any $\tilde{t} \in (0,T]$,
\[
    \{\dt+b_1\dx+b_2 \dy -\e^2 \dxx - \dyy + (\lambda - f) \} \mathcal{H} < 0
    \qquad\qquad \text{ in } [0, \tilde{t}] \times \TR,
\]
so by the classical maximum principle for parabolic equations, we have
\[
    \max_{[0,\tilde{t}] \times \T \times [0,R]} \mathcal{H}
    \le \max \{\|H(0)\|_{L^\infty (\TR)}, \max_{\tau \in [0, \tilde{t}]}
    \{e^{-\lambda \tau} \|H(\tau) |_{y=0} \|_{L^\infty (\T)} \} \}
\]
provided that $R \ge exp \left(\frac{1}{\mu} \|H\|_{L^\infty} \right) -1$.
Therefore, for any $(x,y) \in \TR$, we have
\[\begin{split}
    & \; H(\tilde{t},x,y) - \mu  \left\| \frac{b_2}{1+y}\right\|_{L^\infty} e^{\lambda
    \tilde{t}}  \tilde{t} - \mu e^{\lambda \tilde{t}} \ln (1+y)\\
    \le &\; \max \{e^{\lambda \tilde{t}} \|H(0)\|_{L^\infty (\TR)}, \max_{\tau \in [0,\tilde{t}]}
    \{ e^{\lambda (\tilde{t} -\tau)} \|H(\tau) |_{y=0} \|_{L^\infty (\T)} \} \}
\end{split}\]
which implies \eqref{e:sup H} if we take the limit $\mu \to 0^+$ and
replace the arbitrary time $\tilde{t}$ by $t$.

\end{proof}

The second lemma is the lower bound estimate on bounded solutions for
parabolic equations.
%
%
\begin{lem}[Minimum Principle for Parabolic Equations] \label{l:min parabolic}
Let $\e \ge 0$. If $H \in C([0,T]; C^2 (\TR) ) \cap C^1 ([0,T]; C^0 (\TR))$ is a bounded
function with
\[
    \kappa (t) := \min \{\min_{\TR} H(0), \min_{[0,t] \times \T} H|_{y=0} \} \ge 0
\]
and satisfies
\[
    \{\dt + b_1\dx + b_2\dy -\e^2\dxx - \dyy\} H = f H
\]
where the coefficients $b_1, b_2$ and $f$ are continuous and satisfy \eqref{e:b2 f eqn},
then for any $t \in [0,T]$,
%
%
\begin{equation} \label{e:min H}
    \min_{\TR} H(t) \ge (1 - \lambda t e^{\lambda t} ) \kappa (t).
\end{equation}

\end{lem}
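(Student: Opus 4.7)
The plan is to mimic the proof of Lemma~\ref{l:max parabolic} with the signs reversed. Fix $t \in [0,T]$ and introduce the auxiliary function
\[
\psi(s,x,y) := H(s,x,y) - (1-\lambda s e^{\lambda s})\,\kappa(t), \qquad s \in [0,t].
\]
It suffices to show $\psi \ge 0$ on $[0,t]\times\TR$; evaluating at $s=t$ then yields \eqref{e:min H}.

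First I would check that $\psi \ge 0$ on the parabolic boundary. At $s=0$, this reduces to $H(0) \ge \kappa(t)$, which holds by the very definition of $\kappa(t)$. At $y=0$, the map $s \mapsto \kappa(s)$ is non-increasing, so $H(s,x,0) \ge \kappa(s) \ge \kappa(t)$ for $s \le t$, whence $\psi(s,x,0) \ge \lambda s e^{\lambda s}\kappa(t) \ge 0$. Next, writing $L := b_1\dx + b_2\dy - \e^2\dxx - \dyy$ and substituting $H = \psi + (1-\lambda s e^{\lambda s})\kappa(t)$ into $(\dt+L)H = fH$, a short computation gives
\[
(\dt + L - f)\psi \;=\; \kappa(t)\bigl[f + \lambda e^{\lambda s} + \lambda s e^{\lambda s}(\lambda-f)\bigr].
\]
Because $|f|\le\lambda$ we have both $f+\lambda e^{\lambda s} \ge f+\lambda \ge 0$ and $\lambda-f\ge 0$, and $\kappa(t)\ge 0$ by hypothesis, so the right-hand side is non-negative; thus $\psi$ is a super-solution, $(\dt+L)\psi \ge f\psi$.

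The main step is then a minimum principle on the unbounded cylinder $[0,t]\times\TR$. I would proceed exactly as in Lemma~\ref{l:max parabolic}: set $V := -\psi$, which is bounded above, satisfies $V\le 0$ on the parabolic boundary, and obeys $(\dt+L)V \le fV$ with $|f|\le\lambda$. For $\mu>0$ and $R\ge 1$, introduce
\[
\mathcal{V} \;:=\; e^{-\lambda s} V - \mu \,\|b_2/(1+y)\|_{L^\infty}\,s - \mu \ln(1+y).
\]
The logarithmic barrier drives $\mathcal{V}$ negative on $\{y=R\}$ as soon as $R\ge \exp(\|V\|_{L^\infty}/\mu)-1$, and $\mathcal{V}\le 0$ at $s=0$ and at $y=0$. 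A routine computation parallel to the one in the proof of Lemma~\ref{l:max parabolic} gives
\[
\bigl(\dt + L + (\lambda - f)\bigr)\mathcal{V} \;\le\; -\mu\Bigl[\|b_2/(1+y)\|_{L^\infty} + \tfrac{b_2}{1+y} + \tfrac{1}{(1+y)^2}\Bigr] \;<\; 0
\]
on the bounded cylinder $[0,\tilde t]\times\T\times[0,R]$. Since the zeroth-order coefficient $\lambda-f$ is non-negative, the classical maximum principle forces $\max\mathcal{V}\le 0$ on this cylinder, and sending $\mu\to 0^+$ (with $R\to +\infty$ accordingly) yields $V\le 0$, i.e.\ $\psi\ge 0$, throughout $[0,t]\times\TR$.

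The main obstacle I anticipate is the unboundedness in the $y$-direction: the bounded-domain maximum principle does not apply directly to $\TR$, and the indefinite drift $b_2\dy$ must be absorbed. As in Lemma~\ref{l:max parabolic}, the pair of correction terms $-\mu\|b_2/(1+y)\|_{L^\infty}s$ and $-\mu\ln(1+y)$ simultaneously dominates this drift and pushes $\mathcal{V}$ to $-\infty$ at $y=+\infty$, reducing the problem to a classical maximum principle on a bounded cylinder. No new ideas beyond Lemma~\ref{l:max parabolic} are needed; the argument is essentially its sign-flipped twin, with the non-negativity $\lambda - f \ge 0$ replacing $\lambda + f \ge 0$ and the hypothesis $\kappa(t)\ge 0$ guaranteeing the correct sign of the lower-order residue produced in Step~2.
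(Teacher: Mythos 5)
Your proposal is correct and follows essentially the same route as the paper: both reduce the statement to the classical maximum principle on bounded cylinders $[0,t]\times\T\times[0,R]$ via an auxiliary function built from a time-dependent comparison term of the form $(1-\lambda s e^{\lambda s})\kappa$ together with the barrier $\mu\ln(1+y)+\mu\|b_2/(1+y)\|_{L^\infty}s$, then send $\mu\to 0^+$. The only (cosmetic) difference is that you split the argument into a supersolution step for $\psi$ followed by a sign-flipped version of lemma \ref{l:max parabolic}, whereas the paper packages everything into a single function $h$.
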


The proof of lemma \ref{l:min parabolic} is also standard and very similar to that
of lemma \ref{l:max parabolic}. We will outline it here for the reader's
convenience as well.
%
%
\begin{proof} [Outline of the proof of lemma \ref{l:min parabolic}]
For any fixed $\tilde{t} \in [0,T]$ and $\mu > 0$, let us define $h := e^{- \lambda t}
\{H - \kappa (\tilde{t})\} + \left\{\lambda \kappa (\tilde{t}) + \mu \left\|
\frac{b_2}{1+y} \right\|_{L^\infty} \right\} t + \mu \ln (1+y)$. Then one may check that
\[
    \{\dt + b_1 \dx + b_2 \dy - \e^2 \dxx - \dyy + (\lambda - f) \} h > 0
    \qquad\qquad \text{ in } [0, \tilde{t}] \times \TR,
\]
so by the classical maximum principle for parabolic equations, we have
\[
    \min_{[0,\tilde{t}]\times \T \times [0,R]} h \ge 0
\]
provided that $R \ge exp \left(\frac{1}{\mu} \{ \| H \|_{L^\infty} + \kappa (\tilde{t})\}
\right) - 1$. Taking the limit $R \to + \infty$, and then $\mu \to 0^+$, we obtain
\[
    H(\tilde{t}) \ge (1 - \lambda \tilde{t} e^{\lambda \tilde{t}} ) \kappa (\tilde{t})
\]
which implies inequality \eqref{e:min H} if we replace the arbitrary time
$\tilde{t}$ by $t$.

\end{proof}

\subsection*{Acknowledgements}
Nader Masmoudi was partially supported by an NSF-DMS grant  0703145. Tak Kwong Wong
was partially supported by The Croucher Foundation.

%
%

\def\cprime{$'$}

\end{document}